\tikzset{snake it/.style={decorate, decoration=snake}}
\crefname{enumi}{Theorem}{Main theorems}
\Crefname{enumi}{Theorem}{Main theorems}
\newtheorem*{maintheorem}{Theorem}
\newtheorem{theorem}{Theorem}[section]
\newtheorem{conjecture}[theorem]{Conjecture}
\newtheorem{lemma}[theorem]{Lemma}
\newtheorem{proposition}[theorem]{Proposition}
\newtheorem{corollary}[theorem]{Corollary}
\theoremstyle{definition}
\newtheorem{remark}[theorem]{Remark}
\newtheorem{definition}[theorem]{Definition} 
\newtheorem{construction}[theorem]{Construction} 
\newtheorem*{convention}{Convention}
\numberwithin{equation}{section}
\definecolor{grau1}{RGB}{100, 100, 100}
\definecolor{grau2}{RGB}{150, 150, 150}
\definecolor{grau3}{RGB}{200, 200, 200}
\newcommand{\oldcomment}[1]{
\begin{tcolorbox}[breakable]
	Old version in comments
\end{tcolorbox}
}
\newcommand{\proofcomment}[1]{
\begin{tcolorbox}[breakable]
	Proof in comments
\end{tcolorbox}
}
\DeclareMathAlphabet{\mathpzc}{OT1}{pzc}{m}{it}
\let\oldtocsection=\tocsection
\let\oldtocsubsection=\tocsubsection
\let\oldtocsubsubsection=\tocsubsubsection
\renewcommand{\tocsection}[2]{\hspace{0em}\oldtocsection{#1}{#2}}
\renewcommand{\tocsubsection}[2]{\hspace{1em}\oldtocsubsection{#1}{#2}}
\renewcommand{\tocsubsubsection}[2]{\hspace{2em}\oldtocsubsubsection{#1}{#2}}
\newcommand{\nocontentsline}[3]{}
\newcommand{\tocless}[2]{\bgroup\let\addcontentsline=\nocontentsline#1{#2}\egroup}
\newcommand{\II}{\mathrm{II}}
\newcommand{\eps}{\varepsilon}
\renewcommand{\phi}{\varphi}
\newcommand{\im}{\mathrm{im}} 
\newcommand{\scal}{\mathrm{scal}}
\newcommand{\vol}{\mathrm{vol}}
\newcommand{\diam}{\mathrm{diam}}
\newcommand{\dvol}{\mathrm{dvol}}
\DeclareMathOperator{\id}{\mathrm{id}}
\newcommand{\pr}{\mathrm{pr}}
\newcommand{\supp}{\mathrm{supp}}
\newcommand{\embeds}{\hookrightarrow}
\DeclarePairedDelimiter{\scpr}{\langle}{\rangle}
\newcommand{\calN}{\mathcal{N}}
\newcommand{\calS}{\mathcal{S}}
\newcommand{\bbN}{\mathbb{N}}
\newcommand{\bbH}{\mathbb{H}}
\newcommand{\bbR}{\mathbb{R}}
\newcommand{\dt}{\mathrm{d}t}
\newcommand{\dtau}{\mathrm{d}\tau}
\newcommand{\dx}{\mathrm{d}x}
\newcommand{\tr}{\operatorname{tr}}
\newcommand{\Ric}{\mathrm{Ric}}
\begin{document}

\author[Georg Frenck]{Georg Frenck}
\email{\href{mailto:georg.frenck@math.uni-augsburg.de}{georg.frenck@math.uni-augsburg.de}}
\urladdr{\href{http://frenck.net/Math}{frenck.net/math}\vspace{-.8em}}
\address{Universität Augsburg, Universitätsstr.~14, 86159 Augsburg, Germany}

\author[Bernhard Hanke]{Bernhard Hanke}
\email{\href{mailto:hanke@math.uni-augsburg.de}{hanke@math.uni-augsburg.de}}
\urladdr{\href{https://www.uni-augsburg.de/de/fakultaet/mntf/math/prof/diff/team/bernhard-hanke/}{math.uni-augsburg.de/hanke}\vspace{-.8em}}

\address{Universität Augsburg, Universitätsstr.~14, 86159 Augsburg, Germany}

\author[Sven Hirsch]{Sven Hirsch}
\email{\href{mailto:}{sven.hirsch@columbia.edu}}
\urladdr{\href{https://svenhirsch.com/}{svenhirsch.com}\vspace{-.8em}}
\address{Columbia University, 2990 Broadway, New York NY 10027, USA}

% \subjclass[2020]{53C21; 53C23; 53C27; 57R65}
\keywords{Fill-ins, scalar curvature, total mean curvature, surgery; positive mass theorem}

\title[Surgery and total mean curvature]{Surgery and total mean curvature}

\begin{abstract} 
We prove Gromov's conjecture on the total mean curvature of fill-ins in various cases.
Our methods are based on surgery to reduce the statement to fill-ins of spheres, which can be treated by instances of the positive mass theorem. 
For spin fill-ins, where we permit the mean curvature to take negative values, we build on a classical surgery result of Lawson--Michelsohn and a recent positive mass theorem with creases by Kazaras--Khuri--Lin.
For non-spin fill-ins of spin manifolds, where we assume the mean curvature to be non-negative, we develop a novel quantitative surgery process to reduce the general situation to a result of Shi--Wang--Wei.
We also treat the case of fill-ins of non-spin manifolds, provided there is a fixed positive lower bound on the mean curvature.
\end{abstract}

\maketitle
\tableofcontents

\section{Introduction}\label{sec:introduction}
Let $(M,g_M)$ be a closed Riemannian manifold, possibly disconnected, with scalar curvature $\scal_{g_M} \colon M \to \mathbb{R}$.
A \emph{fill-in} of $(M,g_M)$ is a compact 
Riemannian manifold $(\Omega,g_\Omega)$ such that $(\partial\Omega,g_{\partial\Omega})$ is isometric to $(M,g_M)$, where $g_{\partial\Omega}$ denotes the metric induced by $g_\Omega$ on $\partial\Omega$.
We write $H_{g_\Omega} \colon \partial \Omega \to \mathbb{R}$ for the  mean curvature of $\partial \Omega \subset (\Omega,g_\Omega)$ with respect to the exterior normal.
By convention, this means  that the mean curvature of the closed unit ball in flat Euclidean space $\mathbb{R}^n$ is equal to $n-1$.

\medskip

Inspired by the positive mass theorem in general relativity, Gromov conjectured that for fixed $(M, g_M)$, the {\em total mean curvature} of $(\Omega, g_{\Omega})$ cannot be arbitrarily large under a uniform lower scalar curvature bound of $(\Omega, g_{\Omega})$: 

\begin{conjecture}[{\cite[p.~232]{Gromov2023FourLectures}}] \label{conj:gromov}
    Let $(M, g_M)$ be a closed Riemannian manifold.
    Then for every $\sigma \in \bbR$, there exists a constant $\Lambda(M,g_M, \sigma) \in \mathbb{R}_+$ such that for every fill-in $(\Omega, g_{\Omega})$ of $(M, g_M)$ satisfying $\scal_{g_\Omega}  \ge \sigma$, one has
    \[
    \int_{\partial \Omega} H_{g_\Omega} {\rm dvol}_{g_{\partial \Omega}} \le \Lambda(M,g_M,\sigma).
    \]
\end{conjecture}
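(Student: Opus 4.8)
The plan is to reduce \Cref{conj:gromov} to a rigidity statement for fill-ins of (nearly) round spheres, namely an instance of the positive mass theorem in the guise of the Shi--Tam / Shi--Wang--Wei inequality: a fill-in $(\Omega_0,g_0)$ of a round sphere $(S^{n-1}_r,g_{\mathrm{round}})$ with $\scal_{g_0}\ge\sigma$ satisfies $\int_{S^{n-1}_r}H_{g_0}\,\dvol\le E(r,\sigma)$, where $E(r,\sigma)$ is the total mean curvature of the corresponding round sphere sitting inside the model space ($\mathbb{R}^n$, $S^n$ or $\mathbb{H}^n$ according to the sign of $\sigma$); this follows from the quasi-local mass monotonicity of Bartnik and Shi--Tam together with the Riemannian positive mass theorem. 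The bridge to a general fill-in is a \emph{quantitative surgery lemma}: given any fill-in $(\Omega,g_\Omega)$ of $(M,g_M)$ with $\scal_{g_\Omega}\ge\sigma$, and a prescribed surgery on $M$ (realised by attaching a handle to $\Omega$ along a neighbourhood of an embedded framed sphere in $\partial\Omega$), one can produce a fill-in $(\Omega',g_{\Omega'})$ of the surgered boundary $(M',g_{M'})$ with $\scal_{g_{\Omega'}}\ge\sigma$ and
\[
\int_{M'}H_{g_{\Omega'}}\,\dvol_{g_{M'}}\ \ge\ \int_{M}H_{g_\Omega}\,\dvol_{g_M}\ -\ C,
\]
with $C=C(M,g_M,\sigma,\text{surgery data})$ \emph{independent of} $g_\Omega$. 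The decisive point is this uniformity: the modification must be carried out in a region whose size and shape are fixed in advance, and no region on which $H_{g_\Omega}$ is large may be discarded.

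Granting the lemma, the reduction runs as follows. Since $(M,g_M)$ admits a fill-in it is null-bordant, hence linked to a disjoint union of spheres, and finally to a single round $S^{n-1}_r$, by a finite sequence of surgeries whose combinatorial data is determined by the topology of $M$ alone; interior surgeries of codimension $\ge 3$ of Gromov--Lawson type are performed without touching the boundary and thus without changing $H$. Applying the quantitative surgery lemma once per surgery --- and, in a final step, deforming a boundary collar so that the boundary metric becomes the round metric $g_{\mathrm{round}}$ on $S^{n-1}_r$, again at bounded cost --- we arrive at a fill-in $(\widehat\Omega,\widehat g)$ of $(S^{n-1}_r,g_{\mathrm{round}})$ with $\scal_{\widehat g}\ge\sigma$ and
\[
\int_{S^{n-1}_r}H_{\widehat g}\,\dvol\ \ge\ \int_M H_{g_\Omega}\,\dvol_{g_M}-\Lambda_0(M,g_M,\sigma).
\]
Combining this with the Shi--Wang--Wei inequality $\int_{S^{n-1}_r}H_{\widehat g}\,\dvol\le E(r,\sigma)$ gives $\int_M H_{g_\Omega}\,\dvol_{g_M}\le E(r,\sigma)+\Lambda_0(M,g_M,\sigma)=:\Lambda(M,g_M,\sigma)$, as desired. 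In the case where $M$ is spin and one only allows $H\ge 0$, the whole construction is arranged so that $H\ge 0$ is preserved at every stage, matching the hypotheses of Shi--Wang--Wei.

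When $\Omega$ itself is spin one can dispense with the sign condition on $H$: using the classical surgery theorem of Lawson--Michelsohn to bring $(\Omega,g_\Omega)$ into a normal form, one glues on a fixed model end and invokes the positive mass theorem \emph{with creases} of Kazaras--Khuri--Lin, which directly absorbs the Lipschitz corner and lower-dimensional singular strata created by the gluing, so that no delicate quantitative smoothing is required; the sign of the corner term is exactly the gluing inequality for the (possibly negative) mean curvatures. The case of non-spin $M$ is handled under the extra hypothesis $H\ge h_0>0$, the uniform positive lower bound supplying the room that the spin condition provides elsewhere. \textbf{The main obstacle} is the quantitative surgery lemma: one must build, for each prescribed surgery, a handle and a metric whose contribution to the total mean curvature is controlled purely in terms of $(M,g_M,\sigma)$ even though $g_\Omega$ near the surgery locus is arbitrary --- which forces the surgery loci to be chosen so as never to excise a region of large mean curvature --- and, crucially, the surgeries of index close to $\dim M$ (codimension $<3$ in $\Omega$) lie outside the scope of the classical Gromov--Lawson--Gajer technique and demand a genuinely new construction. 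A secondary bookkeeping difficulty is to match, dimension by dimension, the precise hypotheses (spin, dimension $\le 7$, sign of $\sigma$) under which the required positive mass theorem is available.
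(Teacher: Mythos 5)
Your overall strategy (surgery reduction to sphere fill-ins plus Shi--Wang--Wei, and Kazaras--Khuri--Lin for creases in the spin case) is indeed the route taken in the paper, but your argument does not prove the statement, and its central lemma is not correct as formulated. First, \cref{conj:gromov} imposes no lower mean curvature bound and no spin or dimension hypotheses; the paper (and B\"ar) establish the bound only under one of the hypotheses of \cref{thm:Main}, and even then the constant depends on the lower mean curvature bound $\kappa$, cf.\ \eqref{constant dependency general}. Your proposal silently reintroduces exactly these hypotheses (\enquote{$H\ge 0$ with $M$ spin}, \enquote{$H\ge h_0>0$}, \enquote{$\Omega$ spin}), so at best it sketches the cases of \cref{thm:Main}, not the conjecture as stated. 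Second, the quantitative surgery lemma as you state it --- a fill-in of the surgered boundary whose total mean curvature drops by at most a constant $C$ independent of $g_\Omega$, with the surgery carried out \enquote{in a region whose size and shape are fixed in advance} --- cannot hold: for a fixed surgery locus an arbitrary fill-in can concentrate essentially all of its total mean curvature inside that region, and excising it loses an amount no fill-in-independent constant controls. You flag this difficulty yourself but offer no mechanism. The paper's mechanism is genuinely different and weaker: it fixes two disjoint isotopic copies $\psi_1,\psi_2$ of the surgery embedding and argues by pigeonhole, accepting a multiplicative factor $2$ per handle, see \eqref{eq:pigeonhole}; uniformity survives because the surgered boundary metrics $(\widetilde M_i, g_{\widetilde M_i})$ are independent of the fill-in.

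Moreover, in the case $\kappa=0$ (spin $M$, non-spin fill-in) there is a second obstacle your sketch does not address at all: to keep the mean curvature nonnegative after the handle attachment, the handle width must be chosen depending on the fill-in (via the positive lower bound of $H$ obtained after a small perturbation), so the reduction produces infinitely many boundary metrics on $S^n$, and applying Shi--Wang--Wei to each of them does not yield one uniform constant. The paper resolves this with \cref{prop:criterion-hssw} (a uniform bound for all metrics joined to a fixed one by a pointwise increasing path with a uniform scalar curvature lower bound) combined with the quantitative handle construction of Sections 5--6 (\cref{theorem:handle-properties} and the monotonicity statement \cref{prop:monotonicity-of-arbitrary-handles}), which together constitute the technical heart of \cref{thm:surgery-step}; nothing in your proposal plays this role. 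Finally, your closing step of deforming a boundary collar so that the boundary metric becomes round \enquote{at bounded cost} is both unnecessary (Shi--Wang--Wei treat fill-ins of arbitrary metrics on the sphere) and is itself another instance of the same unproved uniformity claim.
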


The following result verifies this conjecture for various cases, with a constant that also depends on a lower mean curvature bound. 
 
\begin{maintheorem} \label{thm:Main}
    Let $(M,g_M)$ be a closed Riemannian manifold of dimension $n$ and let $\sigma,\kappa \in \bbR$.
    Then there exists a constant $\Lambda = \Lambda(M,g_M,\sigma,\kappa) > 0$ with the following property: 
    If  $(\Omega,g_\Omega)$ is a fill-in of $(M,g_M)$ such that: 
    \begin{itemize}
        \item $H_{g_\Omega}\ge\kappa$,
        \item $\scal(g_\Omega)\ge\sigma$,
    \end{itemize}
    and such that one of the following conditions holds:
    \begin{enumerate}[label=\textbf{(\Alph*)}]
        \item \label{thm:mainC} $\kappa>0$ and $n\le 6$,
        \item \label{thm:mainB} $\kappa=0$, $n\le 6$ and $M$ admits a spin structure,
        \item \label{thm:mainA} $\Omega$ admits a spin structure,
    \end{enumerate}
    \medskip
    then
    \[
      \int_{M} H_{g_\Omega} {\rm dvol}_{g_{\partial \Omega}} \leq \Lambda(M,g_M, \sigma, \kappa).
    \]
\end{maintheorem}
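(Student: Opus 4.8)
The plan is to use surgery to reduce the estimate to the case where the boundary is a round sphere, and then to invoke the upper bounds on the total mean curvature of fill-ins of spheres that follow from the positive mass theorem. We may assume that $(M,g_M)$ admits a fill-in, as otherwise there is nothing to prove; then $M=\partial\Omega$ is null-bordant — in the spin bordism group in case~\ref{thm:mainA}, and in the unoriented bordism group in cases~\ref{thm:mainC} and~\ref{thm:mainB}. Hence $M$ is connected to $S^n$ (or to a disjoint union of spheres) by a finite sequence of surgeries which, after suitably modifying the trace bordism, may be taken of codimension $\ge 3$, and whose number and indices depend only on $M$. We realise these surgeries geometrically by attaching handles to $\Omega$ along $\partial\Omega$ at a fixed scale.

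The technical core is a \emph{quantitative surgery} statement: there are constants $C,\sigma',\kappa'$ depending only on $M,g_M,\sigma,\kappa$, with $\kappa'>0$ whenever $\kappa>0$, and a fixed round metric $\gamma$ on $S^n$, such that every fill-in $(\Omega,g_\Omega)$ as in the theorem can be converted into a fill-in $(\widehat\Omega,\widehat g)$ of $(S^n,\gamma)$ with $\scal_{\widehat g}\ge\sigma'$ and $H_{\widehat g}\ge\kappa'$, with $\dim\widehat\Omega\le 7$ in cases~\ref{thm:mainC} and~\ref{thm:mainB}, with $\widehat\Omega$ spin in case~\ref{thm:mainA}, and with
\[
\int_{S^n}H_{\widehat g}\,\dvol_{\gamma}\ \ge\ \int_{M}H_{g_\Omega}\,\dvol_{g_{\partial\Omega}}\ -\ C .
\]
This is built in two stages. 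A \emph{geometric stage} glues onto $\Omega$ a short warped-product collar which standardises the boundary geometry; since $H_{g_\Omega}\ge\kappa$, Miao's matching inequality makes this compatible with $\scal\ge\sigma'$ across the resulting corner, the collar is arranged so that the total mean curvature drops by at most a bounded amount, and the mean curvature of the new boundary becomes uniformly positive when $\kappa>0$ and uniformly bounded below in general. A \emph{topological stage} then performs the surgeries joining the standardised boundary to $S^n$ one at a time, by the Gromov--Lawson--Schoen--Yau construction — in the spin-equivariant form of Lawson--Michelsohn in case~\ref{thm:mainA} — adapted to the boundary situation so as to keep the scalar and mean curvature lower bounds up to controlled loss; the decisive point is to carry out all surgeries inside a fixed, geometrically controlled region of the boundary, so that each handle is attached along a set of bounded geometry and each step costs only a bounded amount of total mean curvature. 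After finitely many steps the boundary is $S^n$, and a final bending installs the metric $\gamma$. I expect this quantitative surgery, and in particular the bookkeeping for the total mean curvature — a priori $H_{g_\Omega}$ may be arbitrarily large precisely where one wishes to cut — to be the main obstacle, with the spin-equivariant case~\ref{thm:mainA} and the borderline regime $\kappa=0$ of case~\ref{thm:mainB} the most delicate to execute.

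Granting the quantitative surgery, it remains to bound $\int_{S^n}H_{\widehat g}\,\dvol_\gamma$ from above in terms of $\gamma,\sigma',\kappa'$ only. In case~\ref{thm:mainA}, $(\widehat\Omega,\widehat g)$ is a spin fill-in of a round sphere; gluing the appropriate model exterior (Euclidean, hyperbolic or spherical according to the sign of $\sigma'$) along $S^n$ yields a complete spin manifold with a single crease along $S^n$, to which the creased positive mass theorem of Kazaras--Khuri--Lin applies: non-negativity of the mass, with $H_{\widehat g}\ge\kappa'$ entering the crease hypothesis, forces the desired upper bound. In cases~\ref{thm:mainC} and~\ref{thm:mainB}, where $\dim\widehat\Omega\le 7$, one instead invokes the fill-in inequality of Shi--Wang--Wei for round spheres, which is proved by minimal hypersurface and $\mu$-bubble methods — hence without any spin condition on $\widehat\Omega$, but requiring non-negative boundary mean curvature, exactly what the geometric stage supplies, using $\kappa>0$ in case~\ref{thm:mainC} and the spin structure on $M$ to run the borderline surgery in case~\ref{thm:mainB}. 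Combining this upper bound with the displayed inequality gives $\int_M H_{g_\Omega}\,\dvol_{g_{\partial\Omega}}\le\Lambda(M,g_M,\sigma,\kappa)$, as required.
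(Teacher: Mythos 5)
Your overall route---reduce to fill-ins of a sphere by surgery on the boundary and then quote Shi--Wang--Wei, respectively a creased positive mass theorem---is the same as the paper's, but the step you yourself flag as the main obstacle is in fact missing, and it is the decisive one. You claim each handle attachment costs only an additive constant $C$ independent of the fill-in because the handle is ``attached along a set of bounded geometry''; however, bounded geometry of the attaching region does not bound $\int H$ over it: the boundary mean curvature of the fill-in is only bounded \emph{below}, and it may be arbitrarily large and concentrated exactly on the surgery locus, so cutting there can destroy an unbounded amount of total mean curvature. The paper's mechanism for this is the doubling/pigeonhole argument: one takes two disjoint embeddings $\psi_1,\psi_2$ isotopic to the surgery datum, observes that (after subtracting the fill-in-independent lower bound of $H$) the integrals over the complements of the two disjoint supports add up to at least the full integral, and concludes that at least one of the two handle attachments retains at least half of the total mean curvature, giving a multiplicative factor $2$ (plus a fill-in-independent additive constant) per handle, see \eqref{eq:pigeonhole} and \eqref{eq:pigeonhole2}. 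Nothing in your proposal substitutes for this, and your displayed inequality with a fill-in-independent additive loss $C$ is unjustified as stated.

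Two further gaps concern the case distinctions. First, in case \ref{thm:mainB} ($\kappa=0$) your assertion that the surgered fill-in bounds a \emph{fixed} round $(S^n,\gamma)$ with $H\ge\kappa'$ and ``a final bending installs the metric $\gamma$'' hides exactly the crux: to keep the boundary mean curvature nonnegative when $\kappa=0$ the handle must be taken thinner and thinner depending on the (fill-in-dependent) minimum of $H_{g_\Omega}$, so the induced boundary metric after surgery varies with the fill-in, and Shi--Wang--Wei applied to infinitely many metrics gives no uniform constant. The paper resolves this with \cref{prop:criterion-hssw} (a uniform bound for all metrics joined to a fixed one by a pointwise monotonically increasing path with a uniform lower scalar curvature bound) combined with the quantitative, monotone-in-$\varrho$ handle construction of \cref{theorem:handle-properties}, \cref{prop:monotonicity-of-arbitrary-handles} and \cref{thm:surgery-step}; your sketch contains no analogue of either ingredient. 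Second, the topological reduction is off: in the non-spin cases one cannot in general arrange all surgeries to have codimension $\ge 3$ (killing $\pi_2$ of the bordism by surgery is obstructed without a spin structure), which is precisely why the paper uses codimension-$2$ handles together with Lawson--Michelsohn in case \ref{thm:mainC}, and reserves codimension $\ge 3$---and hence the spin hypothesis on $M$---for case \ref{thm:mainB}. Relatedly, in case \ref{thm:mainA} the surgery reduction does \emph{not} produce nonnegative boundary mean curvature; the paper keeps a possibly negative but fill-in-independent bound $\widetilde\kappa$ and compensates by the initial-data-set construction with a jump of $k$ at the crease and the hyperboloidal PMT (there is no ``spherical model exterior'' option for $\sigma'>0$; Shi--Wang--Wei always pass to a hyperboloidal extension with sufficiently negative scalar curvature $\sigma_2$).
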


The dependence of $\Lambda$ on the Riemannian metric $g_M$ is unavoidable, even if $\vol(M)$ and $\diam(M)$ are uniformly bounded, see \cref{fig:weird-pimple} and \cref{sec:bad-constant}.
The explicit dependence of $\Lambda$ in terms of $\sigma$ and $\kappa$ is worked out in \eqref{constant dependency general}.

\begin{figure}[ht]
    \begin{tikzpicture}
        \node at (0,0) {\includegraphics[width=.95\textwidth]{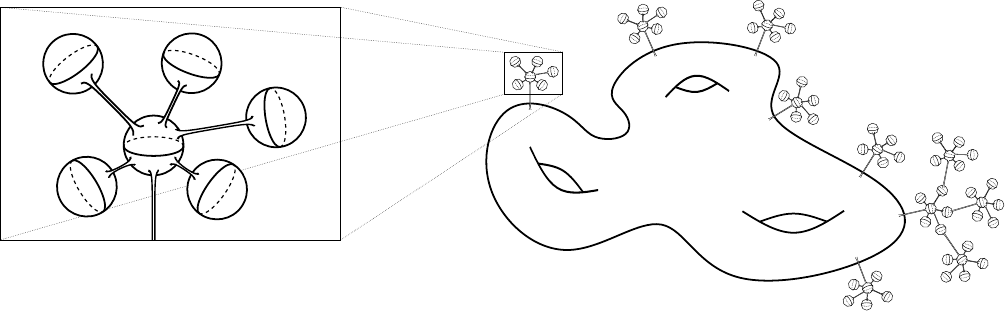}};
    \end{tikzpicture}
    \caption{A Riemannian manifold with large $\Lambda(M,g_M,0,0)$.}\label{fig:weird-pimple}
\end{figure}

Assuming $\kappa \geq 0$, this result has been proven before in the following cases: By Shi--Wang--Wei \cite{Shi-Wang-Wei} if $M$ is diffeomorphic to the $n$-sphere and for orientable fill-ins, based upon Shi--Tam's proof for the positivity of the Brown--York mass \cite{shi-tam-manifolds-with-boundary} and the hyperboloidal positive mass theorem (PMT) \cite{Wang2001AHmass, CJL, ACG2008}; and by Wang \cite{Wang2024FillInsTori} for tori $T^n$ and fill-ins $D^2 \times S^{n-1}$, using Brendle--Hung's \cite{BrendleHung2024SystolicHorowitzMyers} work on the Horowitz--Myers PMT.

\medskip

Christian Bär in \cite{Baer2026UpperBound} has presented  a different proof of \cref{thm:mainA} using spinorial methods.
\cref{thm:mainA} and Bär's result provide the first upper bounds on the total mean curvature to treat fill-ins whose mean curvature assumes negative values.

\begin{convention} Let  $(\Omega, g_{\Omega})$ be a fill-in of $(M,g_M)$ as in \cref{thm:mainC}, \cref{thm:mainB} or \cref{thm:mainA}. 
Then the orientation double cover  $(\overline \Omega, g_{\overline \Omega})$ of $(\Omega, g_{\Omega})$ is a fill-in of the orientation double cover of $(M,g_M)$ and the assumptions in the respective theorem are still satisfied.
Furthermore, $\int_{\partial \overline \Omega} H_{g_{\overline \Omega}} \dvol_{g_{\partial \overline \Omega}} = 2 \int_{\partial  \Omega} H_{g_\Omega} \dvol_{g_{\partial \Omega}}$.
Therefore, in our proofs, we can restrict to oriented  $(M, g_M)$ and $(\Omega, g_\Omega)$ with  an orientation preserving Riemannian isometry $(\partial\Omega, g_{\partial\Omega}) \cong (M, g_M)$.
These assumptions will be used throughout the rest of the paper.
\end{convention}

\subsection{Proof outline}
Our approach is different from the previous ones.
By topological arguments, we can assume that we can pass from $M$ to the sphere $S^n$ through surgeries of codimension $k\ge 2$. 
At the same time, we pass from a fill-in of $M$ to a fill-in of $S^n$ via the corresponding handle attachments. 
Working with handles of sufficiently small width and a pigeonhole argument, this ensures that the total mean curvature of the original fill-in of $M$ transfers to the total mean curvature of a fill-in of $S^n$, up to a controlled error. 

\medskip

Relying on the classical surgery result of Lawson--Michelsohn \cite{Lawson_Michelsoh} for positive mean curvature, this strategy can be carried out as stated when restricting to fill-ins $\Omega$ of $M$ with $H_{g_\Omega} \geq \kappa$ for some fixed $\kappa > 0$.
Using the result of Shi--Wang--Wei \cite{Shi-Wang-Wei}, this leads to a proof of \cref{thm:mainC}, which we present in \cref{sec:fixed lower H bound}.

\medskip

The same strategy also applies to \cref{thm:mainA}, when working with a positive mass theorem for manifolds with creases due to \cite{Lin, KazarasKhuriLin2025}.
The surgery part is even simpler in this case, as the result of Lawson--Michelsohn is not needed. 

\medskip 

From a technical point of view, the proof of \cref{thm:mainB} is the most challenging task.
The majority of this paper is devoted to addressing this issue.
Using surgery, we again reduce the general case to $M = S^n$ treated by Shi--Wang--Wei.
However, if the pointwise lower mean curvature bound of different fill-ins $\Omega$ is not uniformly bounded below by a positive constant, the handles have to be chosen thinner and thinner to ensure that the mean curvature for the resulting fill-in of $S^n$ is nonnegative, which is required in \cite{Shi-Wang-Wei}.
This leads to infinitely many metrics on the resulting $S^n$ and consequently, Shi--Wang--Wei's result does not yield a uniform total mean curvature upper bound.

\medskip

This difficulty is overcome by the following strategy (see \cref{sec:useful-observation}): Shi--Wang--Wei's upper bound can be chosen uniformly for a family of metrics on $S^n$, provided that each member of the family can be connected to a fixed metric on $S^n$ along a monotonically increasing path of Riemannian metrics with lower scalar curvature bounds that are independent of the given metric in the family (see \cref{prop:criterion-hssw} for the precise statement). 
We will develop a quantitative surgery construction, see \cref{subsec:quantitative},  that provides the required control of the surgery handles.
Proceeding by induction on the number of surgery steps finishes the proof for \cref{thm:mainB}. 

\begin{remark} \cref{thm:mainC} and \cref{thm:mainB} hold for any $n$ for which the hyperboloidal PMT holds for oriented manifolds in dimension $n+1$.
\end{remark}

\subsection{Quantitative surgery}
\label{subsec:quantitative}

Our main tool for establishing \cref{thm:mainB} is a new quantitative surgery procedure.
This encompasses and strengthens the classical surgery result of Gromov--Lawson \cite{GromovLawson1980} and has connections to the extrinsic surgery result of Lawson--Michelsohn \cite{Lawson_Michelsoh}.

\medskip

To explain this construction, let $\calS$ be a compact submanifold in $(M,g_M)$ of codimension $k$, and let $\varrho > 0$ be a small parameter. 
A {\em Gromov--Lawson handle} of size $\varrho$ is a hypersurface $\Sigma_\varrho$ contained in $(M\times[0,1], dt^2 + g_M)$  which coincides with $M=M\times\{0\}$ outside the $2\sqrt{\varrho}$-tubular neighbourhood of $\calS$ in $M$, becomes cylindrical of width $\varrho^4$ at height $2 \varrho$ and is invariant under the fibrewise $\mathrm{O}(k)$-symmetry of the normal bundle of $\calS$ in $M$, see \cref{fig:handle_intro}.

\medskip

Let $\scal_\varrho$ be the scalar curvature of $\Sigma_{\varrho}$ and let $H_{\varrho}$ be the mean curvature of $\Sigma_\varrho$.
Furthermore, let $s$ be the minimum of the scalar curvature of $(M,g_M)$ on $\calS$. 
The crucial properties of our construction are:
\begin{itemize}
\item if $k \geq 2$, then $(H_\varrho)_-$ converges to zero as $\varrho\to 0$, 
\item if $k \geq 3$, then $(\scal_\varrho- s)_-$ converges to zero as $\varrho \to 0$.
\end{itemize}
By performing a conformal transformation arbitrarily close to the identity, we can ensure that the mean curvature is positive for sufficiently small $\varrho$.

\medskip

The detailed construction in the special case where $\calS$ is a point in flat Euclidean space $\mathbb{R}^n$ is carried out in \cref{sec:euclidean-handles}.
For general $M$ and $\calS$, error terms arise, which will be controlled in \cref{sec:handles}.

\begin{figure}[ht]
    \begin{tikzpicture}
        % \grid
        \node at (0,0) {\includegraphics[width=.95\textwidth]{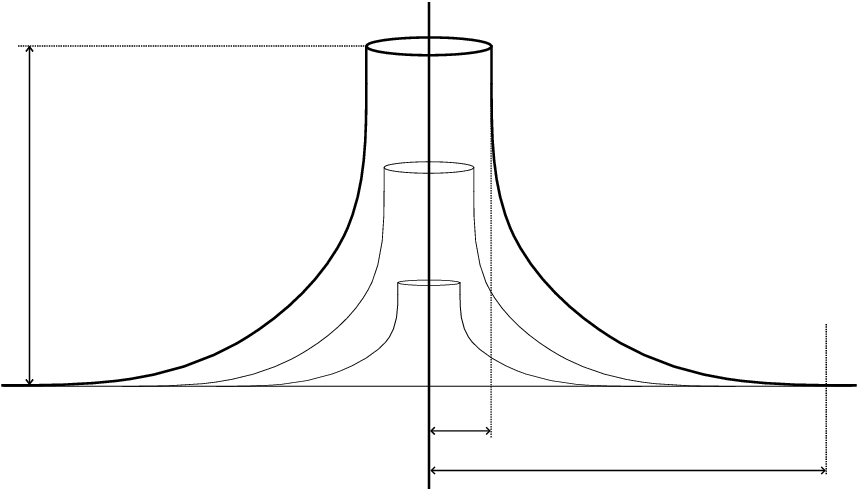}};
        \node at (0.435,-2.35) {$\rho^4$};
        \node at (2.8,-2.95) {$2 \sqrt{\varrho}$};
        \node at (-5.15,0.45) {$2 \varrho$};
        \node at (-3,-2.8) {$(\Omega,g_\Omega)$};
        \node at (-1.4,1.8) {$\Sigma_\varrho$};
       \end{tikzpicture}
    \caption{A cross-section of a family of Gromov--Lawson handles $\Sigma_\varrho$ attached to $\Omega $ along $M=\partial \Omega$. The origin corresponds to the intersection of the cross-section with $\calS$.}\label{fig:handle_intro}
\end{figure}

\medskip

Our construction moreover provides precise control between handles of different sizes $\varrho$.  
In particular, the heuristic observation (see Figure \ref{fig:handle_intro}) that the metrics on $\Sigma_\varrho$ are monotonically increasing in $\varrho$ can be made rigorous, while maintaining a uniform lower bound on the scalar curvature of $\Sigma_\varrho$.
The last property requires working with  handles of codimension at least $3$, which accounts for the spin assumption for $M$ in \cref{thm:mainB}.

\medskip 

Our computations are fully explicit, and the handles are constructed from first principles, yielding an independent approach to  \cite[Theorem A]{GromovLawson1980} with explicit geometric bounds.
Consequently, our results also encompass variants of the conclusions of several earlier works, including  
\cite{sweeney2025examples, sweeney2025new, BasilioDodziukSormani2017, Dodziuk2020, BasilioSormani2021, BasilioKazarasSormani2020}; compare \cref{rem:GL}.

\medskip

Our results also allows us to define a new quasi-local mass in general relativity with several particularly appealing properties. 
This will be discussed in \cref{sec:quasi-local}.

\medskip

\noindent \textbf{Acknowledgements:} 
BH and SH wish to thank Simon Brendle for useful discussions and his interest in this work. 
BH and SH are grateful to the University of Augsburg, to Columbia University, to the Lonavala Geometry Festival, and to the Chennai Mathematical Institute for their hospitality and ideal working conditions. 
BH acknowledges financial support by Columbia University and the DFG-SPP 2026 ``Geometry at Infinity''.
SH was supported in part by an AMS–Simons Travel Grant (award no. 210022).

\section{Surgery for fill-ins with fixed lower mean curvature bounds} \label{sec:fixed lower H bound}
Applying a surgery procedure, we will reduce the general case of \cref{thm:mainC} to (oriented) fill-ins of the $n$-sphere. 
Since this case has been treated in \cite{Shi-Wang-Wei}, this finishes the proof of \cref{thm:mainC}.
A similar argument applies to \cref{thm:mainA}, where the case of spheres will be postponed to \cref{sec:spin_fill_in}.

\begin{proof}[Proof of \cref{thm:mainC}]
Let $(M,g_M)$ and  $\sigma \in \bbR, \kappa >0$ be as in \cref{thm:mainC}.
Put $n := \dim M$.
By taking products with standard round spheres, we can assume without loss of generality that $n \geq 5$. 
 Let $B$ be a fixed compact oriented manifold such that $\partial B = M$ and so that $B$ induces the reverse orientation on $M$. 

\medskip

Removing a small open disc from the interior of $B$, we obtain a compact oriented bordism $W$ from $M$ to $S^{n}$. 
Annihilating $\pi_0(W)$ and $\pi_1(W)$  by  surgeries in the interior of $W$ (using $n \geq 5$), we can assume that  the inclusion $S^n \hookrightarrow \partial W \subset W$ is $1$-connected. 
Since $n \geq 5$, this implies that $(W, M)$ has a relative handle decomposition with handles of codimension at least $2$.

\medskip

Let $\ell \geq 0$ be the number of handles in this handle decomposition of $W$.
The proof of \cref{thm:mainC} is by induction on $\ell$. 
The case $\ell = 0$ is covered by \cite[Theorem 4.1 and Remark 4.2]{Shi-Wang-Wei}.
Now assume by induction that  \cref{thm:mainC} holds for $\ell \geq 0$ and that the number of handles in $W$ is equal to $\ell + 1$. 

\medskip

Let $(\Omega, g_{\Omega})$ be a fill-in of $(M, g_M)$ satisfying $\scal_{g_{\Omega}} \geq \sigma$ and $H_{g_{\Omega}} \geq \kappa$.
Let $\omega \colon (\partial \Omega, g_{\partial \Omega}) \cong (M, g_M)$ be an (orientation preserving) Riemannian isometry.
For simplicity of exposition, we will assume that $\partial \Omega = M$ and $\omega = \id$.
In the general case, the isometry $\omega$ has to be inserted when gluing subsets of $B$ to $\Omega$. 
This implies that $\Omega \cup_M B$ is oriented.

\medskip 

Let $\psi \colon S^{n-k} \times D^{k} \hookrightarrow M$, $k \geq 2$, be the surgery datum for the first handle $T$ in $W$ and let  $\widetilde M = \partial (\Omega \cup T)$ be obtained from $M$ by surgery along $\psi$.
Note that the induction hypothesis applies to $\widetilde M$.
There exist smooth embeddings $\psi_i \colon S^{n-k} \times D^{k} \hookrightarrow M$, $i = 1,2$, that are both isotopic to $\psi$ and have disjoint images, see \cref{fig:displacements-of-psi}.
With this ``doubling construction'' we can estimate the total mean curvature after the surgery by a pigeonhole argument, see \eqref{eq:pigeonhole} below. 

\medskip

\begin{figure}[ht]
  \begin{tikzpicture}
    % \grid
    \node at (0,0) {\includegraphics[width=.4\textwidth]{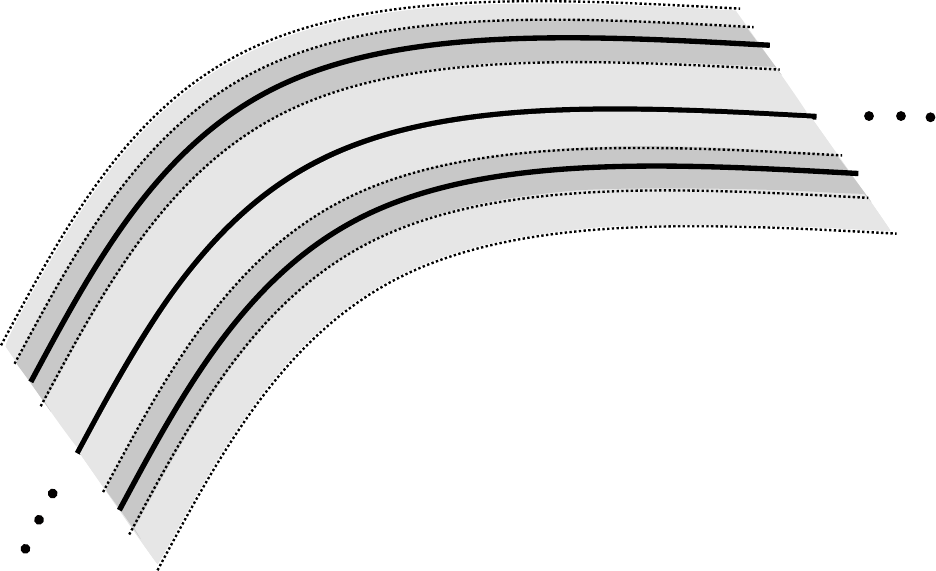}};
    \node at (-1.2,2) {$\im(\psi)$};
    \node at (-3.5,-0.8) {$\im(\psi_1)$};
    \node at (3.2,0.7) {$\im(\psi_2)$};
  \end{tikzpicture}
  \caption{The disjoint embeddings $\psi_1$ and $\psi_2$ which are isotopic to $\psi$ as small displacements.}
  \label{fig:displacements-of-psi}
\end{figure}
Choose a smooth collar  $M \times [0,1) \hookrightarrow W$ of $M \subset \partial W$ in $W$.
With respect to this collar, we identify $M \times [0,1)$ with a smooth submanifold of $W$.
Let $t$ be the canonical coordinate on $[0,1)$ and choose a Riemannian metric $h$ on $W$ which restricts to the metric 
\begin{equation} \label{collarmetric}
   dt^2 + \left(1+ \tfrac{2t}{n} \kappa\right) g_M \text{ on } M \times [0,1) \subset W,
\end{equation}
Note that $(W,h)$ only depends on $(M,g_M)$, but not on the fill-in $(\Omega, g_\Omega)$.
The second fundamental form of $(W,h)$ along $M \times \{0\}$ with respect to the exterior normal is equal to $- \tfrac{\kappa}{n} \cdot g_M$.

\medskip

From now on, let  $i \in \{1,2\}$. 
By \cite[Theorem 3.1]{Lawson_Michelsoh}, we can find a submanifold $T_i\subset W$ which intersects $M$ in $\im (\psi_i)$, which is isotopic to the handle attached along the surgery datum $\psi_i$ and such that the mean curvature of 
\begin{equation} \label{eq:attach_T_i}
   \widetilde M_i := \partial \left( \Omega\cup T_i\right)
\end{equation}
is bounded below by some constant $0 < \widetilde \kappa  < \kappa$, for $i = 1,2$, see \cref{fig:handle-attachment}. 
Here $\partial \left( \Omega\cup T_i\right)$ is a smooth submanifold of $W$ and the mean curvature is computed with respect to the metric $h$ and the normal pointing inside $W$.

\begin{figure}[ht]
  \begin{tikzpicture}
    % \grid
    \node at (0,0) {\includegraphics[width=.6\textwidth]{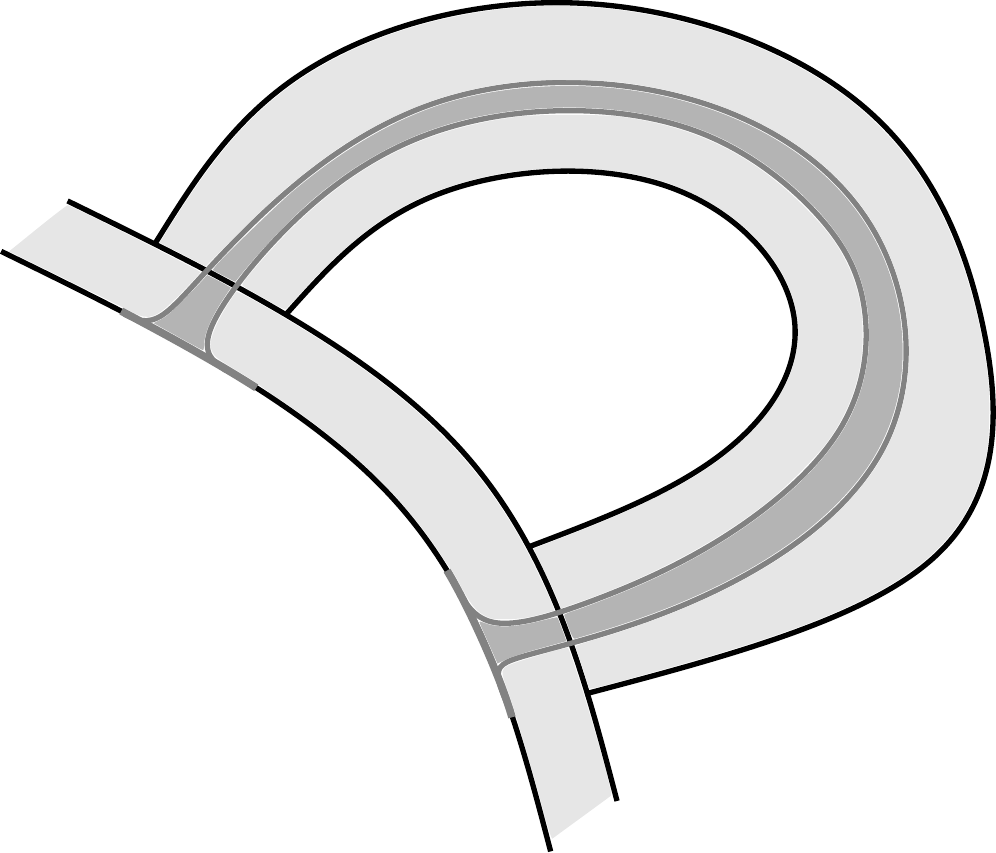}};
    \node at (-4.7,1.6) {$M\times[0,1)$};
    \node (0) at (-3,-1.5) {$\im (\psi_i)$};
    \draw[-stealth, bend left=20] (0.north) to (-2.5,0.5);
    \draw[-stealth, bend right=20] (0.east) to (-0.5,-1.7);
    \node (1) at (5,-1) {$T_i$};
    \draw[-stealth, bend left=20] (1) to (2.8,-0.8);
  \end{tikzpicture}
  \caption{The bordism $W$ consisting of one handle and the embedded handle}\label{fig:handle-attachment}
\end{figure}

\medskip 

Let $g_{\widetilde M_i}$ be the metric on $\widetilde M_i$ induced by $h$.
The Riemannian manifold $(\widetilde M_i, g_{\widetilde M_i})$ is independent of the fill-in $(\Omega, g_{\Omega})$. 
Furthermore, since $\widetilde M_i$ is diffeomorphic to $\widetilde M$,  the induction hypothesis applies to $\widetilde M_i$.

\medskip

Set $ \widetilde \Omega_i := \Omega \cup T_i \subset \Omega \cup W$.
We have $\partial \widetilde \Omega_i = \widetilde M_i$.
Let $Z_i$ be open neighborhoods of $\supp(\psi_i) \subset M$ such that $Z_1 \cap Z_2 = \emptyset$.
Further below, we will construct Riemannian metrics $g_{\widetilde \Omega_i}$ on $\widetilde\Omega_i$ such that
\begin{enumerate}
  \item \label{zero} the metric on $\widetilde M_i$ induced by $g_{\widetilde \Omega_i}$ coincides with $g_{\widetilde M_i}$,
  \item \label{one}  $H_{g_{\widetilde \Omega_i}} = H_{g_\Omega}$ on $\partial \Omega \setminus Z_i$,
  \item \label{two} $H_{g_{\widetilde \Omega_i}}\ge \widetilde \kappa$ everywhere,
  \item  \label{three} $\scal(g_{\widetilde \Omega_i}) > {\rm min} \{ \sigma  , \min \scal_h \}-1$.
\end{enumerate}
Using $Z_1\cap Z_2=\emptyset$, properties \ref{one}, \ref{two} and \ref{three}, and $\widetilde \kappa > 0$, we obtain
\begin{align} \label{eq:pigeonhole}
  \begin{split}
    \int_{\partial  \Omega}H_{g_\Omega} {\rm dvol}_{g_{\partial \Omega}} \le{}& \int_{\partial  \Omega\setminus Z_1}H_{g_\Omega} {\rm dvol}_{g_{\partial \Omega}} +\int_{\partial  \Omega\setminus Z_2}H_{g_\Omega} {\rm dvol}_{g_{\partial \Omega}} \\
      \le{}& \int_{\partial  \widetilde \Omega_1}H_{g_{\widetilde \Omega_1}} {\rm dvol}_{g_{\partial \widetilde \Omega_1}} +\int_{\partial  \widetilde \Omega_2}H_{g_{\widetilde \Omega_2}} {\rm dvol}_{g_{\partial \widetilde \Omega_2}} \\
      \le{}& 2 \max \left\{ \int_{\partial \widetilde \Omega_1} H_{g_{\widetilde \Omega_1}} {\rm dvol}_{g_{\partial \widetilde \Omega_1}} , \int_{\partial \widetilde \Omega_2} H_{g_{\widetilde \Omega_2}} {\rm dvol}_{g_{\partial \widetilde \Omega_2}} \right\} . 
  \end{split}
\end{align}

By the inductive assumption, for $i \in \{1,2\}$, \cref{thm:mainC} holds for  the manifold $(\widetilde M_i, \widetilde g_i)$, the lower scalar curvature bound  ${\rm min} \{ \sigma , \scal_h \}-1$ and the lower mean curvature bound $\widetilde \kappa > 0$.
Since  all of these data are independent of the fill-in $(\Omega, g_{\Omega})$, this concludes the induction step.

\medskip

It remains to construct the metric $g_{\widetilde \Omega_i}$ on $\widetilde\Omega_i$. 
Let $U_i, V_i$, $i = 1,2$, be open neighborhoods of $\im(\psi_i) \subset M$ such that $\overline U_i\subset V_i$, $\overline V_i\subset Z_i$.
By \cite[Theorem 3.7]{BaerHanke2}, we can smoothly deform the metric $g_{\Omega}$ on $\Omega$ to a new metric on $\Omega$, called $g_i$ for short, with the following properties (see \cref{fig:deformations}): 
\begin{itemize}
  \item The metrics on $\partial \Omega$ induced by $g_i$ coincide with $g_M$,
  \item $H_{g_i} = H_{g_\Omega}$ on $\partial \Omega \setminus Z_i$, 
  \item $H_{g_i}\ge \kappa$ everywhere,
  \item On $V_i$, the second fundamental form of $g_i$ is given by $\II_{g_i} = \tfrac{\kappa}{n} g_M$, 
  \item $\scal_{g_i} > \sigma -1$.
\end{itemize}
\begin{figure}[ht]
  \begin{tikzpicture}
    \node at (0,0) {\includegraphics[width=0.6\textwidth]{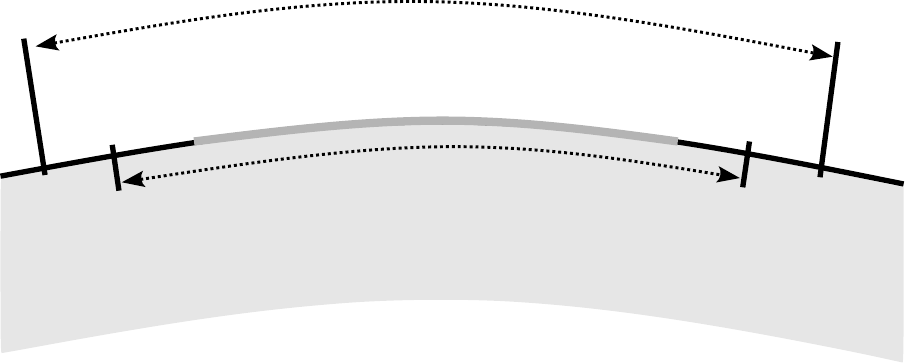}};
    \node at (0,0.85) {$\im(\psi_i)$};
    \node at (0,-.2) {$V_i$, $\II_{g_i} = \tfrac{\kappa}{n} \cdot g_M$};
    \node at (-3,1.5) {$ Z_i$};
    \node (0) at (0,-1.3) {$H_{g_i} = H_g$};
    \draw[-stealth, bend left=17] (0) to (-3.6,0);
    \draw[-stealth, bend right=17] (0) to (3.4,0);
  \end{tikzpicture}
  \caption{Deformation of $g_\Omega$ near $\partial \Omega$ (grey region)}
  \label{fig:deformations}
\end{figure}

Let $\delta > 0$ and let $\mathscr{C}_i := \partial \Omega \times (-\delta, 0] \subset \Omega$ be an open collar  of $\partial \Omega$ in $\Omega$ induced by the normal exponential map along $\partial \Omega$ for $g_i$.
Put $\mathscr{V}_i := V_i \times (-\delta, 0] \subset \mathscr{C}_i$.
Equip the topological manifold
\[ 
    \mathscr{V}_i \cup_{V_i \times 0} (V_i \times [0,1)) = V_i \times (-\delta, 1) \subset \Omega \cup W
\]
with the product smooth structure. 
We have 
\[
   \widetilde \Omega_i \subset \Omega \cup (U_i \times (-{\delta},1)) \cup {\rm int}(W). 
 \]
By construction, the given smooth structures on mutual intersections are compatible with each other. 
Hence, we obtain an induced smooth structure on $\widetilde \Omega_i$.

\medskip

Extend the restriction $h|_{V_i \times [0,1)}$ to a smooth metric $h_i'$ on $V_i \times (-\delta, 1]$ in an arbitrary way. 
The  $1$-jets of the metrics $g_i$ and $h_i'$ along $V_i \times \{0\} \subset  V_i \times (-\delta, 1)$ coincide by the construction of $g_i$.
Let $\chi_i \colon \partial \Omega \to [0,1]$ be a smooth cut-off function that is equal to $1$ on $U_i$ and equal to $0$ on $\partial \Omega \setminus V_i$. 
Consider the smooth path of symmetric $(0,2)$-tensor fields $\psi_i \colon [0,1] \to C^{\infty}(T^*\mathscr{C}_i  \otimes T^*\mathscr{C}_i)$,
\[
  \psi_i(t)(x,r) := (1-\chi_i(x)\cdot t) g_i + \chi_i(x) \cdot t \cdot h_i'. 
 \]
Since $\chi_i(x) = 0$ for $x \in \partial \Omega \setminus V_i$, this is well-defined.
Possibly after passing to a smaller $\delta$ (hence to a smaller $\mathscr{C}_i$), the path $\psi_i$ consists of Riemannian metrics on $\mathscr{C}_i$, see \cref{fig:deformation-before-local-flex}.
\begin{figure}[ht]
  \begin{tikzpicture}
    \node at (0,0) {\includegraphics[width=.8\textwidth]{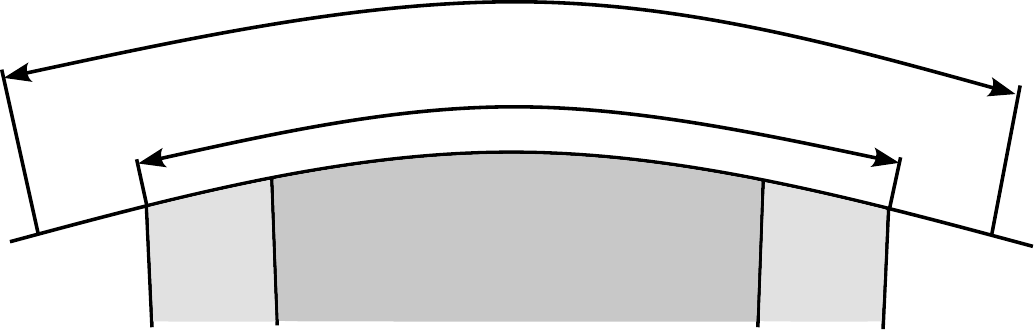}};
    \node at (-5,-1.3){$g_i$};
    \node at (5,-1.3){$g_i$};
    \node at (-4,1.9){$Z_i$};
    \node at (-2,1){$V_i$};
    \node at (0,-0.5){$U_i\times(-\delta,0]$};
    \node at (0,-1.1){$h_i'$};
    \node (0) at (0,-2){interpolation region};
    \draw[-stealth, bend right=20] (0.east) to (2.8,-1.6);
    \draw[-stealth, bend left=20] (0.west) to (-2.8,-1.6);
  \end{tikzpicture}
  \caption{The local deformation $\psi_i$ for $t=1$.}\label{fig:deformation-before-local-flex}
\end{figure}

\medskip

By the choice of $\chi_i$, the $1$-jet of $\psi_i(t)$ along $\partial \Omega$ is independent of $t$.
In particular, the scalar curvature of $\psi_i(t)$ along $\partial \Omega$ is given by 
\[
  \scal_{\psi_i(t)} = (1-\chi_i(x) \,  t) \cdot \scal_{g_i} + \chi_i(x)\,  t \cdot \scal_{h'_i} > \min \{ \sigma , \min \scal_{h} \} -1 . 
\]
By continuity, possibly after passing to a smaller $\delta$, we have, on $\mathscr{C}_i$,
\[
  \scal_{\psi_i(t)} > \min \{ \sigma , \min \scal_{h} \} -1, \quad t \in [0,1] . 
\]
We apply the local flexibility lemma \cite[Theorem 1.2]{BaerHanke1}  to  the local deformation $\psi_i$ on the open neighborhood $\mathscr{C}_i$ of $\partial \Omega$ in $\Omega$ and to the second order open partial differential relation of being a Riemannian metric   with $\scal > \min \{ \sigma, \min \scal_{h}  \} -1 $ on symmetric $(0,2)$-tensor fields in $C^{\infty}(T^* \Omega \otimes T^*\Omega)$. 
This results in  a smooth path $\Psi_i \colon [0,1] \to C^{\infty}( T^* \Omega \otimes T^* \Omega)$ of Riemannian metrics on $\Omega$ starting at $g_i$, and there exists some $0 < \delta' < \delta$, such that the restriction of $\Psi_i(t)$ to $\partial \Omega \times (-\delta',0]$ coincides with $\psi_i(t)$, see \cref{fig:deformation-after-local-flex}. 

\begin{figure}[ht]
  \begin{tikzpicture}
    \node at (0,0) {\includegraphics[width=.8\textwidth]{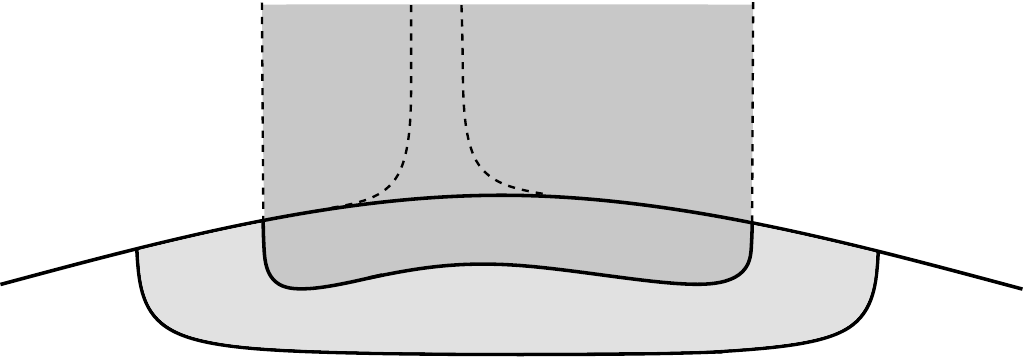}};
    % \grid
    \node at (1,0.4) {$g_{\widetilde \Omega_i} =  h_i' = h$};
    \node at (1,-.6) {$g_{\widetilde\Omega_i} = h_i'$};
    \node at (4,-2) {$g_{\widetilde \Omega_i} = g_i$};
    
    \node at (1,1.2) {$U_i\times[0,1)$};
    \node at (-3.6,-2) {$\Omega$};
    \node (0) at (-3.7,1.2) {$T_i\, \cap\, U_i\times[0,1)$};
    \draw[-stealth, bend right=20] (0) to (-1.1,0.4);
  \end{tikzpicture}
  \caption{The end of the global deformation $\Psi_i(1)$ obtained by applying the local flexibility lemma to the local deformation $\psi_i$.}\label{fig:deformation-after-local-flex}
\end{figure}

The metric $\Psi_i(1)$ on $\widetilde \Omega_i \cap \Omega = \Omega$, the metric $h_i'$ on $\widetilde \Omega_i \cap ( U_i \times (-\delta',1))$ and the metric $h$ on $\widetilde \Omega_i \cap W$ are compatible with each other.
Hence, the union of these metrics defines a smooth metric $g_{\widetilde \Omega_i}$ on $\widetilde \Omega_i$.
By construction, the following holds: 
\begin{itemize}
    \item the metric on $\partial \widetilde \Omega_i$ induced by $g_{\widetilde \Omega_i}$ coincides with $g_{\widetilde M_i}$,
    \item  $H_{g_{\widetilde \Omega_i}} = H_{g_i} = H_{g_\Omega}$ on $\Omega \setminus Z_i$,
    \item  $H_{g_{\widetilde \Omega_i}} \geq \kappa$ on $Z_i \setminus U_i$,
    \item $H_{g_{\widetilde \Omega_i}} \geq \widetilde \kappa$ everywhere else on $\widetilde M_i$, by the choice of $T_i$,
    \item $\scal_{g_{\widetilde \Omega_i}} > {\rm min} \{ \sigma, \min \scal_{h}  \} - 1$.
\end{itemize}
Therefore, $g_{\widetilde \Omega_i}$ possesses all the required properties.
This finishes the proof of \cref{thm:mainC}.
\end{proof}

In preparation for Section \ref{sec:spin_fill_in}, we prove the following result by a similar argument.

\begin{lemma} \label{lem:prep_spin}
Assume that \cref{thm:mainA} holds in the case when $M$ is diffeomorphic to the $n$-sphere.
Then \cref{thm:mainA} holds in general.
\end{lemma}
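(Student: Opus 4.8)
The plan is to run the proof of \cref{thm:mainC} again, now reducing \cref{thm:mainA} to fill-ins of $S^n$, with two modifications. First, all auxiliary manifolds are chosen compatibly with a spin structure, which is possible precisely because $\Omega$ --- hence $M=\partial\Omega$ --- is spin. Second, since $H_{g_\Omega}$ may now be negative, the extrinsic surgery result of Lawson--Michelsohn is not needed: a fixed ``straight'' handle inside the auxiliary bordism suffices, at the cost of only a (possibly negative) lower mean curvature bound on the new boundary. As in \cref{thm:mainC}, one first reduces to $n\ge 5$ by crossing with round spheres --- $\Omega\times S^m$ is again spin, $\scal_{g_\Omega\oplus g_{S^m}}\ge\sigma$, the mean curvature of $\partial(\Omega\times S^m)$ equals $H_{g_\Omega}$, and the total mean curvature merely rescales by $\vol(S^m)>0$, so an upper bound for fill-ins of $M\times S^m$ yields one for fill-ins of $M$.

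Since $\Omega$ admits a spin structure, $M=\partial\Omega$ carries the restricted spin structure; as $M$ has only finitely many spin structures, we may --- running the rest of the argument separately for each and taking the maximum of the resulting constants --- fix a spin structure $\mathfrak{s}$ on $M$ and assume $\Omega$ induces it. Then the spin bordism class of $(M,\mathfrak{s})$ vanishes, so there is a compact spin manifold $B$ with $\partial B=M$ inducing the reversed orientation and $\mathfrak{s}$, whence $\Omega\cup_M B$ is closed and spin. Deleting an open disc from its interior turns $B$ into a compact spin bordism $W$ from $M$ to $S^n$ with $\Omega\cup_M W$ spin. Annihilating $\pi_0$ and $\pi_1$ by interior surgeries compatible with the spin structure (possible since $n\ge5$, by the standard passage to a highly connected representative in spin bordism) makes $S^n\embeds W$ $1$-connected, so $(W,M)$ admits a relative handle decomposition with handles of codimension $\ge2$. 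None of this depends on the fill-in.

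The argument then proceeds by induction on the number $\ell$ of handles, structurally identical to the proof of \cref{thm:mainC}: for $\ell=0$ one has $M\cong S^n$ and invokes the hypothesis. For the inductive step, given a spin fill-in $(\Omega,g_\Omega)$ of $(M,g_M)$ with $\scal\ge\sigma$ and $H\ge\kappa$, take the surgery datum $\psi\colon S^{n-k}\times D^k\embeds M$ of the first handle ($k\ge2$), disjoint isotopic copies $\psi_1,\psi_2$, and a metric $h$ on $W$ restricting to $dt^2+(1+\tfrac{2t}{n}\kappa)g_M$ on a short collar of $M$ (short enough that the warping factor stays positive even for $\kappa<0$), so that the second fundamental form of $M\times\{0\}$ with respect to the exterior normal is $-\tfrac{\kappa}{n}g_M$; $(W,h)$ is independent of $\Omega$. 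As in \cref{thm:mainC}, using \cite[Theorem 3.7]{BaerHanke2} and the local flexibility lemma \cite[Theorem 1.2]{BaerHanke1}, deform $g_\Omega$ near $\partial\Omega$ to a metric $g_i$ agreeing with $g_\Omega$ (and still with $H\ge\kappa$, $\scal>\sigma-1$) off a fixed neighbourhood $Z_i$ of $\im(\psi_i)$ and umbilic with $\II_{g_i}=\tfrac{\kappa}{n}g_M$ near $\im(\psi_i)$, matching the boundary geometry of $(W,h)$; then attach a \emph{fixed} handle $T_i\subset W$ along $\psi_i$ (no Lawson--Michelsohn) and glue $g_i$ to $h$ smoothly across $\im(\psi_i)$, interpolating higher jets by the local flexibility lemma as before. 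This yields a smooth spin fill-in $(\widetilde\Omega_i,g_{\widetilde\Omega_i})$ --- spin as a codimension-$0$ submanifold of the spin manifold $\Omega\cup W$ --- of a \emph{fixed} Riemannian manifold $(\widetilde M_i,g_{\widetilde M_i})$, with $H_{g_{\widetilde\Omega_i}}=H_{g_\Omega}$ on $\partial\Omega\setminus Z_i$, with $\scal_{g_{\widetilde\Omega_i}}>\min\{\sigma,\min\scal_h\}-1=:\sigma'$, and --- the one genuinely new feature --- with $H_{g_{\widetilde\Omega_i}}\ge\kappa'$ everywhere for a constant $\kappa'=\kappa'(M,g_M,\kappa)$ that may be \emph{negative}. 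Deleting the first handle leaves a spin bordism from $\widetilde M$ to $S^n$ with $\ell$ handles, so the inductive hypothesis applies to $(\widetilde M_i,g_{\widetilde M_i})$ with bounds $\sigma',\kappa'$ and gives $\int_{\widetilde M_i}H_{g_{\widetilde\Omega_i}}\,\dvol_{g_{\widetilde M_i}}\le\Lambda'$ with $\Lambda'=\Lambda'(M,g_M,\sigma,\kappa)$ independent of $\Omega$.

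Finally one redoes the pigeonhole estimate \eqref{eq:pigeonhole}. Because $H_{g_\Omega}$ and $H_{g_{\widetilde\Omega_i}}$ need no longer be nonnegative, each of its inequalities picks up an additive error, but every such error is bounded by $\min\{0,\kappa\}$ or $\min\{0,\kappa'\}$ times the $g_M$- or $g_{\widetilde M_i}$-volume of a region --- $\partial\Omega\setminus(Z_1\cup Z_2)$, a $Z_i$, or a handle cap --- fixed before the fill-in is chosen; so one obtains $\int_{\partial\Omega}H_{g_\Omega}\,\dvol_{g_{\partial\Omega}}\le2\Lambda'+C$ with $C=C(M,g_M,\sigma,\kappa)\ge0$, which closes the induction with $\Lambda:=2\Lambda'+C$. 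The step I expect to be the real obstacle is not any single estimate but the bookkeeping that keeps the induction self-contained: one must verify that $W$, the handles $T_i$, the regions $Z_i$, and all intermediate Riemannian data can be fixed \emph{before} the fill-in is chosen, while still carrying a spin structure and having handles of codimension $\ge2$ --- this is exactly what forces $\Lambda'$, $\sigma'$, $\kappa'$ and the error constant $C$ to depend only on $(M,g_M,\sigma,\kappa)$. The positive mass theorem with creases of \cite{Lin, KazarasKhuriLin2025} does not enter this reduction; it is needed only for the sphere case, which is the hypothesis here.
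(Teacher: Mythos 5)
Your proposal is correct and follows essentially the same route as the paper: reduce to $n\ge 5$, fix a spin structure and a spin bordism $W$ from $M$ to $S^n$ with handles of codimension $\ge 2$, induct on the number of handles, attach fixed handles $T_i$ without invoking Lawson--Michelsohn (so the lower mean curvature bound $\kappa'$ on the new boundary may be negative but is independent of the fill-in), and rerun the pigeonhole argument. The only cosmetic difference is in the bookkeeping of the pigeonhole step: the paper integrates $H-\widetilde\kappa$ so that all integrands are nonnegative and converts back at the end, whereas you track additive error terms bounded by $|\kappa|$, $|\kappa'|$ times fixed volumes --- these are equivalent.
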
 

\begin{proof}
Let $(M,g_M)$ and $\sigma, \kappa \in \bbR$ be as in \cref{thm:mainA}.
Put $n := \dim M$.
As before, we can assume $n \geq 5$.
Since $M$ carries only finitely many spin structures, we can assume without loss of generality that $M$ is equipped with a fixed spin structure and we are only considering spin fill-ins $\Omega$ of $M$ with isometries $ \partial \Omega \approx M$ that are spin structure preserving. 
Let $B$ be a spin manifold with $\partial B = M$ such that $B$ induces the inverse spin structure on $M$.

\medskip 

Removing a small open disc from the interior of $B$, we obtain a compact spin bordism $W$ from $M$ to $S^{n}$. 
Annihilating $\pi_0(W)$ and $\pi_1(W)$ by surgeries in the interior of $W$ (using $n \geq 5$), we can assume that  the inclusion $S^n \hookrightarrow \partial W \subset W$ is $1$-connected. 
Since $n \geq 5$, this implies that $(W, M)$ has a relative handle decomposition with handles of codimension at least $2$. 

\medskip 

Let $\ell \geq 0$ be the number of handles in this handle decomposition. 
The proof of \cref{thm:mainA} is by induction on $\ell$. 
The case $\ell = 0$ is covered by the assumption of \cref{lem:prep_spin}.
Assume by induction that \cref{thm:mainA} holds for $\ell \geq 0$ and that the number of handles of $W$ is equal to $\ell +1$.

\medskip 

Let $(\Omega, g_{\Omega})$ be a spin fill-in of $(M,g_M)$ satisfying $\scal_{g_{\Omega}} \geq \sigma$, $H_{g_{\Omega}} \geq \kappa$.
Let $\omega \colon (\partial \Omega, g_{\partial \Omega}) \cong (M, g_M)$ be a spin preserving Riemannian isometry.
For simplicity of exposition, we will assume from now on that $\partial \Omega = M$ and $\omega = \id$.
This implies that $\Omega \cup_M B$ is a spin manifold.

\medskip

We now proceed as in the proof of \cref{thm:mainC}, but  with one difference: For the construction of the surgery handles $T_i \subset W$ before  \eqref{eq:attach_T_i}, intersecting $M$ in $\mathrm{im}(\psi_i)$, we do no longer refer to \cite{Lawson_Michelsoh}. 
This implies that the lower bound $\widetilde \kappa$  for the mean curvatures of $\widetilde M_i$, $i = 1,2$, in \eqref{eq:attach_T_i} may become negative. 
Note that, as before,  $\widetilde \kappa$ does not depend on the fill-in $(\Omega, g_{\Omega})$ and that we can assume $\widetilde \kappa < \kappa$.

\medskip 

Set $\widetilde \Omega_i := \Omega \cup T_i$. 
Note that $\widetilde \Omega_i$ carries an induced spin structure.
By the construction below \cref{eq:pigeonhole}, which does not require $\widetilde \kappa > 0$, we obtain metrics $g_{\widetilde\Omega_i}$ on $\widetilde \Omega_i$ with properties \ref{zero}, \ref{one}, \ref{two} and \ref{three}.
Using that $Z_1 \cap Z_2 = \emptyset$ and that $H_{g_{ \Omega}}, H_{g_{\widetilde \Omega_i}} \geq \widetilde \kappa$, we hence obtain
\begin{align} \label{eq:pigeonhole2}
  \begin{split}
    &\int_{\partial  \Omega} (H_{g_\Omega}- \widetilde \kappa)  {\rm dvol}_{g_{\partial \Omega}}\\
      &\qquad\le{} \int_{\partial  \Omega\setminus Z_1} (H_{g_\Omega}-\widetilde \kappa) {\rm dvol}_{g_{\partial \Omega}} +\int_{\partial  \Omega\setminus Z_2}(H_{g_\Omega}-\widetilde \kappa) {\rm dvol}_{g_{\partial \Omega}} \\
      &\qquad\le{} \int_{\partial  \widetilde \Omega_1}(H_{g_{\widetilde \Omega_1}}-\widetilde \kappa) {\rm dvol}_{g_{\partial \widetilde \Omega_1}} +\int_{\partial  \widetilde \Omega_2}(H_{g_{\widetilde \Omega_2}}-\widetilde \kappa) {\rm dvol}_{g_{\partial \widetilde \Omega_2}} \\
      &\qquad\le{} 2 \max \left\{ \int_{\partial \widetilde \Omega_1} (H_{g_{\widetilde \Omega_1}}-\widetilde \kappa) {\rm dvol}_{g_{\partial \widetilde \Omega_1}} , \int_{\partial \widetilde \Omega_2} (H_{g_{\widetilde \Omega_2}}-\widetilde \kappa) {\rm dvol}_{g_{\partial \widetilde \Omega_2}} \right\}.
  \end{split}
\end{align}
Here we take advantage of the fact that all integrands are non-negative.

\medskip

Since $\widetilde \kappa$ and the metrics on $\partial \Omega$ and $\partial \widetilde \Omega_i$, $i=1,2$, are independent of $(\Omega, g_{\Omega})$, we obtain a constant $C$, which is independent of $(\Omega, g_{\Omega})$,  such that 
\[
    \int_{\partial  \Omega} H_{g_\Omega} {\rm dvol}_{g_{\partial \Omega}} \le{}
    2 \max \left\{ \int_{\partial \widetilde \Omega_1} H_{g_{\widetilde \Omega_1}} {\rm dvol}_{g_{\partial \widetilde \Omega_1}} , \int_{\partial \widetilde \Omega_2} H_{g_{\widetilde \Omega_2}} {\rm dvol}_{g_{\partial \widetilde \Omega_2}} \right\} + C. 
\]
This concludes the induction step  for \cref{thm:mainA}.
\end{proof}

\section{Spin fill-ins and creased initial data sets} \label{sec:spin_fill_in}

This section concludes the proof of \cref{thm:mainA}. 
According to  \cref{lem:prep_spin}, the only remaining case is when $M$ is diffeomorphic to the $n$-sphere.
Let $(\Omega_0,g_0)$ be a spin fill-in of $(M,g_M)$ such that
\begin{itemize}
\item $\scal(g_0)\ge \sigma_0$ for some $\sigma_0 < 0$, 
\item $H_{g_0}\ge \kappa$ along $M$, w.r.t.  the outward-pointing normal, for some $\kappa<0$.
\end{itemize}

In this section we adopt the sign-convention commonly used in general relativity, whereby the mean curvatures, denoted by $H(M\subseteq W)$, are computed with respect to the \enquote{infinity-pointing normal}, that is if $W\colon M\leadsto N$ is a cobordism from $M$ to $N$, then $H(M\subseteq W)$ is computed with respect to the interior unit normal, whereas $H(N\subseteq W)$ is computed with respect to the exterior unit normal.

\medskip

Now, let $M,N$ be diffeomorphic to the sphere and consider the positive Lipschitz continuous function $h$ on $M$ given by
\begin{align} \label{def:h}
    h_0=|\kappa|\sinh(1)+\frac13 H(M\subseteq \Omega_0)_+ >0.
\end{align}
After possibly Replacing $h_0$ by a smooth approximation we may without loss of generality assume that $h_0$ is smooth.
According to the proof of \cite[Lemma 3.1 and Theorem 4.1]{Shi-Wang-Wei}, there exists a Riemannian metric $g_1$ on $M\times[0,1]$ such that the cobordism $(\Omega_1=M\times[0,1],g_1)$ connects $(M\approx S^n,g_M)$ to $N:=S^n_{r_0}$, equipped with the round metric $g_{\circ,{r_0}}$ of radius ${r_0}\gg 1$ independent of $h$, with the following properties: 
\begin{enumerate}
    \item \label{um} $ H(M\subseteq \Omega_1)=h_0$ and $H(N\subseteq \Omega_1)>0$ for the mean curvatures of $M,N$, 
    \item \label{dois} $\scal(g_1)=\sigma_1$ for some constant $\sigma_1 < 0$ independent of the function $h_0$,
    \item \label{tres} the total mean curvatures are related by $\int_M H(M\subseteq \Omega_1)\le \int_{N} H(N\subseteq \Omega_1)$.
\end{enumerate}
We note that the final inequality together with $H(M\subseteq\Omega_1)
\ge\tfrac{1}{3}H(M\subseteq\Omega_0)$ implies
\begin{align}\label{total mean curvature monotonicity}
    \int_M H(M\subseteq\Omega_0)\le 3\int_N H(N\subseteq\Omega_2).
\end{align}
We define constants $c_i$, $i=0,1,2$ via
\begin{align*}
    c_0=\max\left\{4|\kappa|, \sqrt{\frac{n}{n+1}|\sigma_0|}, (\cosh 1)^{-1}\sqrt{\frac{n}{n+1}|\sigma_1|}\right\}, \qquad c_1=c_2=\cosh(1)c_0,
\end{align*}
and we set
\begin{align*}
    \sigma_2=-c_2^2\frac{n+1}{n},
\end{align*}
which again is independent of $h_0$. 
We observe that
\begin{align*}
    c_0\ge 4|\kappa|,\qquad |\sigma_2|>\max\{|\sigma_0|,|\sigma_1|\}, \qquad c_i^2\frac{n+1}{n}\ge -\sigma_i\qquad\text{for } i=0,1,2.
\end{align*}\
As in the proof of \cite[Lemma 3.2 and Theorem 4.1]{Shi-Wang-Wei}, there exists an asymptotic hyperboloidal extension $(\Omega_2,g_2)$ of $N$ with $ H(N\subseteq \Omega_1)=H(N\subseteq \Omega_2)$ and $\scal(g_2)=\sigma_2$.
Note that $\sigma_2=\sigma_1$ in \cite{Shi-Wang-Wei}, while in our setting $\sigma_2<\sigma_1$ to compensate for the mean curvature jump at $M$. 
Consider now the manifold $(\Omega,g):=(\Omega_0,g_0)\cup_M(\Omega_1,g_1)\cup_N(\Omega_2,g_2)$ with the above choices for $h_0$ and $\sigma_2$ which has two corners at $M$ and $N$.
The situation is illustrated in \cref{figure H<0}.

\begin{figure}[ht]
  \scalebox{0.65}{
    \begin{tikzpicture}
      \node at (0,0) {\includegraphics[width=.7\textwidth]{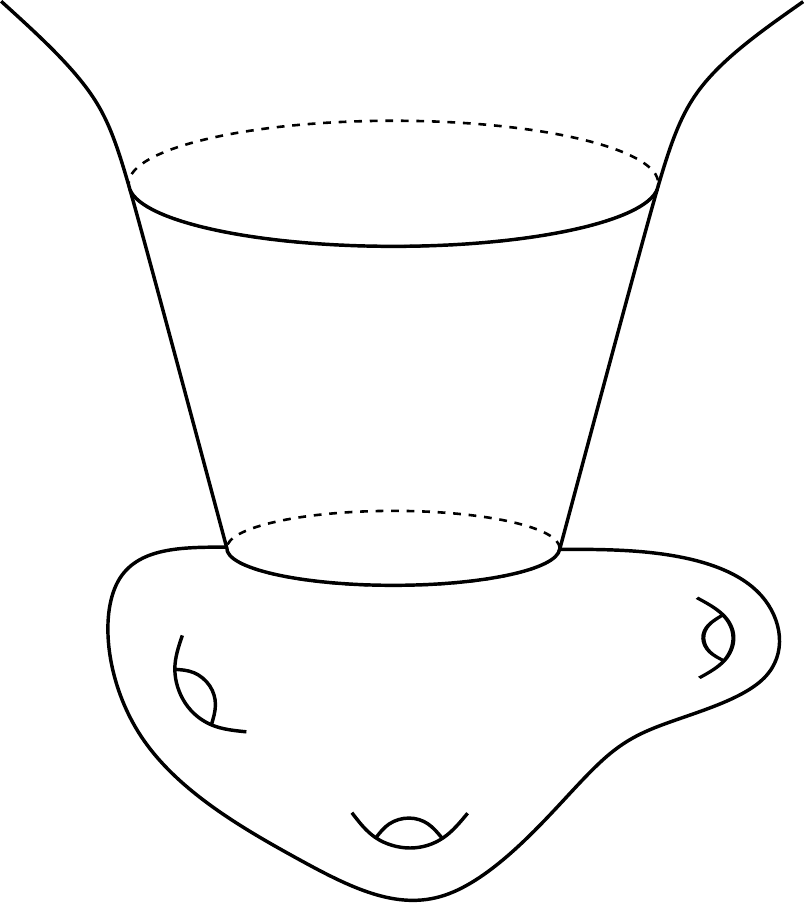}};
      % \grid;
      \node at (0,4.5) {$(\Omega_2,g_2)$};
      \node at (0,1) {$(\Omega_1,g_1)$};
      \node at (0,-3) {$(\Omega_0,g_0)$ };
      \node at (5.1,3.8) {$\begin{matrix}\text{asymptotically}\\ \text{hyperboloidal extension}\end{matrix}$};
      \node at (4.5,1) {cylinder $S^n\times[0,1]$};
      \node at (4,-4) {$\begin{matrix}
        \text{fill-in with}\\ H_{g_0}|_{M}<0 \text{ somewhere}
      \end{matrix}$};
      \node at (0,-1.1) {$(M\approx S^n,g_M)$};
      \node at (0,3) {$(N,g_N)= (S^n_{r_0}, g_{\circ,{r_0}})$};
      \node (0) at (-4.8,-.5) {$\left.\begin{matrix}
        H(M\subset\Omega_0)\ge\kappa<0\\
        H(M\subset\Omega_1)=h_0>0
      \end{matrix}\right\}$};
      \draw[-stealth, bend left=20] (0.east) to (-2.2,-0.9);
    \end{tikzpicture}
  }
  \caption{
  At  $M$  the mean curvature jumps from negative to positive. 
  Treating $\Omega$ as an initial data set $(\Omega,g,k)$, this jump will have a favorable sign if $k$ is chosen appropriately, with an appropriate jump at $M$, which requires the scalar curvature of the asymptotic hyperboloidal extension to be sufficiently negative.}
  \label{figure H<0}
\end{figure}

\medskip

Similarly to \cite[Theorem 4.1 and p.~232--233]{Shi-Wang-Wei} and \cite{shi-tam-manifolds-with-boundary}, the functional
\begin{align}\label{mean curvature - mass inequality}
    m(r)=\frac1{n\omega_{n}}\int_{\Sigma_r}\big(H(\Sigma_r\subseteq \mathbb H^{n+1}_{\sigma_2})-H(\Sigma_r\subseteq \Omega_2)\big)\cosh(r\sqrt{-K})%\xrightarrow{r\to\infty} m,
\end{align}
is monotonically increasing, and $m(r)\to m$ for $r\to\infty$.
Here $K=\tfrac{\sigma_2}{n(n+1)}$ ($=1$ in \cite{Shi-Wang-Wei}), and $H(\Sigma_r\subseteq \mathbb H^{n+1}_{\sigma_2})=n\sqrt{-K}\coth(r\sqrt{-K})$ is the mean curvature of a sphere in hyperbolic space $\mathbb H^{n+1}_{\sigma_2}$ with constant scalar curvature $\sigma_2$. 
Moreover, $\Sigma_r$ denotes the $(r-r_0)$-distance spheres to $N$ with respect to the hyperbolic background metric, e.g. we $\Sigma_{r_0}=N=S^n_{r_0}$.
Finally, the hyperboloidal mass $m$ will be discussed in \eqref{mass def}. % in more detail.

\medskip

In view of \eqref{total mean curvature monotonicity} and \eqref{mean curvature - mass inequality}, it suffices to prove $m\ge0$ in order to establish \cref{thm:mainA}.
However, since $\kappa<0$ and $h_0>0$, there may be an unfavorable jump of the mean curvature at $M$, which corresponds to \enquote{infinitely negative} scalar curvature, as depicted in \cref{figure H<0}.
This can be remedied by introducing a symmetric $(0,2)$-tensor $k$ on $\Omega$ which also jumps at $M$.
With a suitable choice of $k$, the \emph{dominant energy condition} for the resulting initial data set $(\Omega,g,k)$ is satisfied weakly across $M$.
Hence, the hyperboloidal positive mass theorem can be applied in this more general context.

\medskip

With the above choice of constants $c_0,c_1,c_2$, define the symmetric $(0,2)$-tensor $k_i=\tfrac {c_i}ng_i$ on $\Omega_i$ and let $k=k_0\cup k_1\cup k_2$.
Consequently, we obtain an initial data set $(\Omega,g,k)$ satisfying the dominant energy condition
\begin{align*}
    \mu-|J|=\frac12\left(\scal_i+c^2_i\frac{n+1}n \right)\ge \frac12\left(\scal_i-\sigma_i \right)\ge0\qquad \text{for $i=0,1,2$},
\end{align*}
where the energy density $\mu$ and momentum density $J$ are given by
\begin{align*}
    \mu=\frac12\left(\scal+\tr_g(k)^2-|k|^2\right),\qquad J=\operatorname{div}(k-\tr_g(k)g).
\end{align*}
It remains to check whether the dominant energy condition of $(\Omega,g,k)$ holds weakly across $M$.

\medskip

Given $\vartheta\in\mathbb R$, define the hyperbolic rotation matrix
\begin{align*}
    F_\vartheta=
    \begin{pmatrix}
        \cosh \vartheta & -\sinh \vartheta\\
        -\sinh \vartheta &\cosh\vartheta
    \end{pmatrix}.
\end{align*}
Consider the \emph{spacetime mean curvature vectors}
\begin{align*}
    \vec H(M\subseteq \Omega_i)=\begin{pmatrix}
        H(M\subseteq \Omega_i)\\ \tr_{M}(k_i)
    \end{pmatrix}=\begin{pmatrix}
        H(M\subseteq \Omega_i)\\ c_i
    \end{pmatrix}.
\end{align*}
Using the definition of $c_1$, we compute for $\vartheta=-1$
\begin{align*}
    &F_{\vartheta}      \vec H(M\subseteq \Omega_0)- \vec H(M\subseteq \Omega_1)\\
    &\qquad=
    \begin{pmatrix}
       \cosh(-1) H(M\subseteq \Omega_0)-\sinh(-1) c_0\\
       \cosh(-1)c_0-\sinh(-1)H(M\subseteq \Omega_0)
    \end{pmatrix}
    -
    \begin{pmatrix}
        H(M\subseteq \Omega_1)\\ c_1
    \end{pmatrix}\\
    &\qquad=
    \begin{pmatrix}
        \cosh(-1) H(M\subseteq \Omega_0)-\sinh(-1) c_0-H(M\subseteq \Omega_1)\\
        -\sinh(-1)H(M\subseteq \Omega_0)
    \end{pmatrix}.
\end{align*}
Let $x\in M$ where $H(M\subseteq \Omega_0)>0$. Using $c_0\ge4|\kappa|$ and $2\sinh(1)>\cosh(1)$, we estimate in this case at $x$
\begin{align*}
     &\cosh(-1) H(M\subseteq \Omega_0)-\sinh(-1) c_0-H(M\subseteq\Omega_1)\\
     &\qquad\ge \cosh(1) H(M\subseteq \Omega_0)+(4-1)\sinh(1)|\kappa|\\
     &\qquad\ge\sinh(1)|H(M\subseteq \Omega_0)|.
\end{align*}
Next, let $x\in M$ where $H(M\subseteq\Omega_0)\ge 0$.
Using that $\cosh(1)-\sinh(1) = e^{-1} > \tfrac13$, we estimate in this case at $x$
\begin{align*}
     &\cosh(-1) H(M\subseteq \Omega_0)-\sinh(-1) c_0-H(M\subseteq\Omega_1)\\
     &\qquad\ge \cosh(1) H(M\subseteq \Omega_0)+(4-1)\sinh(1)|\kappa|-\frac13H(M\subseteq \Omega_0)\\
     &\qquad\ge \sinh(1)|H(M\subseteq \Omega_0)|.
\end{align*}
This demonstrates that inequality (1.7) in \cite{KazarasKhuriLin2025} holds at $M$, i.e. the dominant energy condition is weakly satisfied across $M$.
We point out, using the notation of \cite{KazarasKhuriLin2025}, that $\nu_+,\tau_+$ correspond to $\binom{1}{0},\binom{0}{1}$, and that the terms $\beta_\pm=k(\nu,\cdot)|_{TM}$, $\beta^\Delta=\beta_+-\beta_--d\vartheta$ vanish, where $\nu$ denotes the infinity-pointing normal to $M$.

\medskip

It remains to justify that the hyperboloidal positive mass theorem (PMT) can indeed now be applied, which is needed to resolve Gromov’s total mean curvature conjecture in the spin case.
To this end, we first recall the standard hyperboloidal PMT for initial data sets.
In what follows, let $b$ denote the standard $(n+1)$-dimensional hyperbolic metric with scalar curvature $-n(n+1)$.
We write $\mathcal S$ for the spinor bundle of the spin manifold $\Omega$, and set $\overline{\mathcal S}=\mathcal S\oplus \mathcal S$ for the corresponding spacetime spinor bundle, cf. \cite[Definition 2.6]{HirschZhang2025}, which admits a Clifford action of Minkowski space $\mathbb R^{n+1,1}=\operatorname{span}\{e_0,e_1,\dots,e_{n+1}\}$.

\begin{theorem}[{\cite{CH2003} and \cite[Proposition 2.9]{HirschJangZhang2025}}]
    Let $(\Omega^{n+1},g,k)$ be an asymptotically hyperboloidal spin initial data set, that is,
    $g\to g_{\mathbb H^n}$ and $k\to g$ at infinity\footnote{We refer to \cite[Definition 2.2]{HirschJangZhang2025} for the precise asymptotics.},
    and suppose that the dominant energy condition $\mu\ge|J|$ holds.
    Then for each Killing spinor $\psi^\infty$ on $\mathbb H^{n+1}$ there exists a spinor
    $\psi\in\overline{\mathcal S}$, asymptotic to $\psi^\infty$ with $\psi-\psi^\infty\in H^1(\Omega)$, solving
    $\slashed D\psi=-\tfrac12\tr_g(k)e_0\psi$, such that
    \begin{align*}
        \mathcal{H}_{hyp}(V)
        =4\int_\Omega \Big(
            \big|\nabla_i\psi+\frac12 k_{ij}e_j e_0\psi\big|^2
            +\frac12\big\langle \psi,(\mu+J e_0)\psi\big\rangle
        \Big)\ge0,
    \end{align*}
    where $V=|\psi^\infty|^2_b$ and the hyperboloidal mass functional $\mathcal{H}_{hyp}(V)$ is defined in \cite[p. 307]{HirschJangZhang2025}.
\end{theorem}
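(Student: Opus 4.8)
The plan is to run the Witten-type spinorial argument in its spacetime (Dirac--Witten) form, adapted to an asymptotically hyperboloidal end; this is by now classical, so I would only need to assemble the three standard ingredients: a Weitzenböck identity, a solvability statement for the Dirac--Witten equation with prescribed Killing-spinor asymptotics, and the pointwise positivity of $\mu+Je_0$ under the dominant energy condition.

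First I would introduce the \emph{modified spacetime connection} on $\overline{\mathcal S}$,
\[
  \widehat\nabla_i\psi := \nabla_i\psi + \tfrac12 k_{ij}e_je_0\psi ,
\]
where $\nabla$ is the spinorial Levi--Civita connection of $(\Omega,g)$, together with the Dirac--Witten operator $\slashed D$ (the Dirac operator of $\nabla$ acting on spacetime spinors), for which the natural equation to impose is $\slashed D\psi=-\tfrac12\tr_g(k)e_0\psi$. The point of $\widehat\nabla$ is that on the hyperbolic reference initial data set $(\mathbb{H}^{n+1},b,b)$ --- the limit at infinity, where $g\to b$ and $k\to g$ --- it is flat, and its parallel sections are precisely the spacetime Killing spinors $\psi^\infty$. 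The core computation, which is the Schrödinger--Lichnerowicz formula for $\slashed D$ combined with the Gauss--Codazzi equations, then gives, for any compactly supported $\psi$,
\[
  \int_\Omega \bigl|\slashed D\psi + \tfrac12\tr_g(k)e_0\psi\bigr|^2
   = \int_\Omega\Bigl(|\widehat\nabla\psi|^2 + \tfrac12\langle\psi,(\mu+Je_0)\psi\rangle\Bigr) + \lim_{r\to\infty}\int_{\Sigma_r}\langle\psi,\mathcal B\psi\rangle
\]
for a suitable boundary operator $\mathcal B$; once $\psi$ is replaced by its asymptotic value $\psi^\infty$, the boundary term converges to $\tfrac14\mathcal H_{hyp}(V)$ with $V=|\psi^\infty|_b^2$, which --- matching conventions --- is the definition of the hyperboloidal mass functional of \cite{HirschJangZhang2025}.

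Next I would solve the Dirac--Witten equation with the prescribed asymptotics. Writing $\psi=\psi^\infty+\phi$, the equation becomes $(\slashed D+\tfrac12\tr_g(k)e_0)\phi=\eta$, where $\eta:=-(\slashed D+\tfrac12\tr_g(k)e_0)\psi^\infty$ lies in $L^2(\Omega)$ thanks to the decay $g\to b$, $k\to g$ at infinity and the fact that $\psi^\infty$ is parallel for the reference connection. The operator $\slashed D+\tfrac12\tr_g(k)e_0$ is formally self-adjoint, and the Weitzenböck identity above together with the dominant energy condition $\mu\ge|J|$ yields a coercive estimate $\|\phi\|_{H^1}\lesssim\|(\slashed D+\tfrac12\tr_g(k)e_0)\phi\|_{L^2}$ for compactly supported $\phi$; combined with standard elliptic theory on asymptotically hyperbolic manifolds --- in particular the verification that the relevant weight avoids the indicial roots of the hyperbolic Dirac operator, so that there is no cokernel --- this upgrades to an isomorphism $H^1\to L^2$, producing the desired $\phi$ and hence $\psi$.

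Finally, inserting this $\psi$ into the Weitzenböck identity makes its left-hand side vanish, so $\tfrac14\mathcal H_{hyp}(V)=\int_\Omega\bigl(|\widehat\nabla\psi|^2+\tfrac12\langle\psi,(\mu+Je_0)\psi\rangle\bigr)$. Since $Je_0$ is self-adjoint with $(Je_0)^2=|J|^2\,\mathrm{id}$, the endomorphism $\mu+Je_0$ has spectrum in $[\mu-|J|,\mu+|J|]$, whence $\langle\psi,(\mu+Je_0)\psi\rangle\ge(\mu-|J|)|\psi|^2\ge0$ by the dominant energy condition, and therefore $\mathcal H_{hyp}(V)\ge0$. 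I expect the main obstacle to be the solvability step: fixing the correct weighted Sobolev framework on the hyperboloidal end, checking that $\eta$ decays fast enough to be square-integrable (this is exactly where the precise asymptotics of \cite[Definition~2.2]{HirschJangZhang2025} enter), and ruling out cokernel via the indicial-root analysis. Since all of this is carried out in the time-symmetric case in \cite{CH2003} and in the general initial-data case in \cite{HirschJangZhang2025}, in the write-up I would invoke those references rather than reproduce the analysis.
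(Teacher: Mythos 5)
Your proposal is correct and follows essentially the same route as the paper, which does not prove this theorem itself but quotes it from \cite{CH2003} and \cite[Proposition 2.9]{HirschJangZhang2025}; the proofs there are exactly the Dirac--Witten argument you sketch (modified connection $\widehat\nabla_i=\nabla_i+\tfrac12 k_{ij}e_je_0$, Schr\"odinger--Lichnerowicz/Weitzenb\"ock identity with boundary term converging to $\mathcal H_{hyp}(V)$, solvability of $\slashed D\psi=-\tfrac12\tr_g(k)e_0\psi$ with Killing-spinor asymptotics via weighted elliptic theory, and positivity of $\mu+Je_0$ from the dominant energy condition). Deferring the weighted Sobolev and indicial-root analysis to those references, as you do, matches the paper's treatment.
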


By appropriate rescalings ($g_{ij}\to\lambda^2 g_{ij}$, $k_{ij}\to \lambda k_{ij}$) an analogous result holds for initial data sets $(\Omega, g,k)$ asymptotic to $\mathbb H^{n+1}_{\sigma_2}$ instead of the standard hyperbolic space $\mathbb H^{n+1}=\mathbb H^{n+1}_{n(n+1)}$, cf. \cite[p. 235]{Shi-Wang-Wei}.
See \cite{CJL, CH2003} and \cite[Appendix A]{HirschJangZhang2025} for a detailed discussion and for further references on the hyperboloidal PMT.
We emphasize that this result should not be confused with the \emph{asymptotically AdS}
PMT, in which the metric $g$ also asymptotes to the hyperbolic metric
but $k$ asymptotes to zero \cite{ChruscielMaertenTod, hirsch2025causal}.

\medskip

In our situation, the initial data set has significantly more restrictive asymptotics:
we have $k=c_2g$ at infinity, and the Bartnik extension metric has a very
specific form.
As pointed out in \cite[p.~232 and Section 3]{CH2003} as well as \cite[Appendix A]{BalehowskyWoolgar2012}, under these so-called
Wang asymptotics the mass \begin{align}\label{mass def}
    m=\mathcal H_{hyp}(V_0),\qquad V_0=V_0(\psi_0^\infty)
\end{align} reduces to the
mass defined in \cite{Wang2001AHmass}, for a certain choice of Killing spinor $\psi^\infty_0$.
In view of \eqref{mean curvature - mass inequality}, this in particular implies $m(r_0)\ge0$, which yields the desired total mean curvature bound.

\medskip

Finally, we explain why the corners at $M$ and $N$ do not pose difficulties.
The type of corner arising at $N$ is well understood; see, for instance,
\cite{BoniniQing2008}.
We therefore focus on the corner along $M$.
Strictly speaking, the results of \cite{Lin, KazarasKhuriLin2025} are formulated for
asymptotically flat spin manifolds.
However, their arguments are entirely local, and it is straightforward to see that
they extend to the asymptotically hyperboloidal spin setting as well.
More precisely, given a spinor $\psi$ on $N$, let $\psi_\pm$ denote its restrictions to
$\Omega_+:=\Omega_1\cup_N\Omega_2$ and $\Omega_-:=\Omega_0$, respectively.
Recall Witten’s standard divergence identity,
\begin{align*}
    &\Big|\nabla_i\psi_\pm+\frac12 k_{ij}e_j e_0\psi_\pm\Big|^2
    +\frac12\mu|\psi_\pm|^2
    +\frac12\langle J e_0\psi_\pm,\psi_\pm\rangle
    -\Big|\slashed D\psi_\pm-\frac12\tr_g(k)e_0\psi_\pm\Big|^2 \\
    &\qquad=\nabla_i\big(
        \langle \psi_\pm,\nabla_i\psi_\pm\rangle
        -\langle e_i\psi_\pm,\slashed D\psi_\pm\rangle
    \big)
    +\frac12\nabla_i\langle \psi_\pm,(k_{ij}e_j-\tr_g(k)e_i)e_0\psi_\pm\rangle.
\end{align*}
Integrating this identity by parts on both $\Omega_-$ and $\Omega_+$, and then imposing
the transmission boundary condition \cite[Equation (1.9)]{KazarasKhuriLin2025}
\begin{align}\label{transmission}
    \psi^-=\cosh\frac{\vartheta}{2}\,\psi^+
    +\sinh\frac{\vartheta}{2}\,\nu^+ e_0^+\psi^+,
\end{align}
for some angle function $\vartheta$ on $M$ and normal vectors $\nu^\pm$ to $M$, one obtains Kazaras--Khuri--Lin's version of Witten’s
mass formula \cite[Proposition~4.4]{KazarasKhuriLin2025}.
The existence theory for spinors solving $\slashed D\psi=-\tfrac12\tr_g(k)e_0\psi$ together  boundary conditions $\psi\to\psi^\infty$ at infinity and \eqref{transmission} follows from a straightforward
combination of the arguments in \cite{KazarasKhuriLin2025} and
\cite{Wang2001AHmass, CJL, CH2003}.
See also \cite{BaerBallmann2012} for a more general discussion of such transmission boundary conditions.

\medskip

We note that the hyperboloidal positive mass theorem can in fact be reduced to the
asymptotically flat case, as shown in \cite{ChruscielDelay2019}.
Thus, the results of \cite{KazarasKhuriLin2025} can also be applied directly.
This finishes the proof of \cref{thm:mainA}.

\medskip

The hyperboloidal positive mass theorem also holds for non-spin manifolds, see \cite{ACG2008, Sakovich2021, Lundberg2023, CD2019}.
However, the non-smooth case treated in \cite{KazarasKhuriLin2025} does not yet have an
analogue in the non-spin setting.
Resolving this issue would extend the statement of \cref{thm:mainA} to the non-spin case.

\medskip 

We close this section with some remarks on the asymptotics of $\Lambda(M, g_M, \sigma, \kappa)$ for fixed $(M, g_M)$. 
In \cite{Baer2026UpperBound} it is shown that $\Lambda(M, g_M, \sigma, \kappa)$ can be chosen to depend linearly on $\sqrt{ |\sigma|}$ and $|\kappa|$.

\medskip 

Our constant $ \Lambda(M,g_M, \sigma_0, \kappa)$, in case $M \approx S^n$, can be explicitly estimated via 
\begin{align}\label{eq:mean-curvature-in-hyperbolic-space}
    H(N=S^n_{r_0}\subseteq \mathbb H^{n+1}_{\sigma_2})= n\sqrt{-\frac{\sigma_2}{n(n+1)}}\coth\left(r_0\sqrt{-\frac{\sigma_2}{n(n+1)}}\right),
\end{align}
and
\begin{align}\label{eq:sigma_2-for-hyperbolic-spheres}
    \sigma_2=-\frac{n+1}{n}\cosh(1)^2\max\left\{16\kappa^2, \frac{n}{n+1}|\sigma_0|, (\cosh 1)^{-1}\frac{n}{n+1}|\sigma_1|\right\}.
\end{align}
For $-\sigma_0\gg1$ or $-\kappa \gg1$, we therefore have $H(N\subseteq \mathbb H^{n+1}_{\sigma_2})\approx\max\{\sqrt{-\sigma_0},-\kappa\}$, which gives the same asymptotics as Bär's proof.

\medskip 

With the methods developed in the succeeding sections, we can show that the same asymptotics hold for all closed spin manifolds.
To justify this, let $(\Omega_0, g_0)$ be a spin fill-in of $(M,g_M)$ such that $\scal_{g_0} \geq \sigma_0$ and $H_{g_{0}} \geq \kappa$ for some $\sigma_0, \kappa < 0$. 
Let $h_0 \colon M \to \bbR_{>0}$ be the (smoothened) function in \eqref{def:h}. 
According to the proof of \cite[Lemma 3.1 and Theorem 4.1]{Shi-Wang-Wei}, there exists a Riemannian metric $g_1$ on $M\times[0,1]$ such that the cobordism $(\Omega_1=M\times[0,1],g_1)$ connects $(M ,g_M)$ to $(N:= M, 2 g_M)$ satisfying properties \ref{um}, \ref{dois} and \ref{tres} above.
This results in a spin fill-in $(\Omega = \Omega_0 \cup_M \Omega_1, g_{\Omega})$ with \emph{positive} mean curvature along $(N,2g_M)$, with a  crease at $M$ and with scalar curvature equal to $\sigma_1 = \sigma_1(M,g_M)$. 

\medskip

Since $N$ is spin, the sphere $S^n$ can be obtained from $N$ by a sequence of $\ell$ oriented surgeries in codimension $\geq 3$ for some $\ell\ge0$, see the beginning of the proof of \cref{thm:mainB} on page \pageref{proof:mainB}.
An inductive application of \cref{thm:surgery-step} (which still holds for creased fill-ins) shows that there exists a smooth metric $\widehat g_{S^n}$ on $S^n$ and a constant $C=C(M,g_M) \le 0$, both of which only depend on $(M, g_M)$, such that the fill-in $(\Omega, g_{\Omega})$ of $(N,g_N)$ can be extended by smooth handle attachments to a creased fill-in $(\widetilde \Omega, g_{\widetilde \Omega})$ of $(S^n, \widehat g_{S^n})$ satisfying
\begin{itemize}
    \item $g_{\widetilde\Omega} = g_\Omega = g_0$ on $\Omega_0$,
    \item $\scal(\widetilde\Omega,g_{\widetilde{\Omega}})\ge \min\{\sigma_0, C-\ell\}$,
    \item $\int_N H \big(N \subseteq \Omega_1) \le 2^{\ell}\int_{S^n}H(S^n\subseteq \widetilde{\Omega})$.
\end{itemize}
By \eqref{total mean curvature monotonicity} and \eqref{mean curvature - mass inequality}, we thus obtain
\begin{align*}
    \int_N H \big(M \subseteq \Omega_0) \leq 2^{\ell}\cdot 3\cdot \int_{S^n_{r_0}} H(S^n_{r_0}\subseteq\bbH^{n+1}_{\sigma_2}) 
\end{align*}
for some $r_0=r_0(M,g_M)>0$ and for $\sigma_2$ given by
\[
    \sigma_2 = -\frac{n+1}{n}\cosh(1)^2\max\left\{16\kappa^2, -\sigma_0\frac{n}{n+1}, -C+\ell\right\},
\]
similar to \eqref{eq:sigma_2-for-hyperbolic-spheres}.
Hence, there exists a $C_0(M,g_M)>0$, independent of $\kappa$ and $\sigma_0$, \eqref{eq:mean-curvature-in-hyperbolic-space} gives
\begin{align}\label{constant dependency general}
  \Lambda(M,g_M,\sigma_0,\kappa)  \le C_0(M,g_M) \cdot \big(\max\{\sqrt{-\sigma_0},-\kappa\}+1\big).
\end{align}

\section{Fill-ins of families of metrics}\label{sec:useful-observation}

Let $(M,g_M)$ be a closed oriented Riemannian manifold.
In this section we derive a uniform upper bound for the total mean curvatures of fill-ins of $M$ for a certain family of metrics on $M$.
The proof is based on arguments in \cite{Shi-Wang-Wei}.

\begin{definition} 
Let  $s \in \mathbb{R}$ such that $\scal_g > s$.
We define  $\mathscr{R}_{g_M, s}(M)$ as the set of Riemannian metrics $g$ on $M$ with the following property: There exists a smooth path $\zeta_{g} \colon [0,1] \to C^{\infty}({\rm Sym} (T^*M \otimes T^* M))$ of Riemannian metrics starting at $g$ and ending at $g_M$ such that, for all $t\in [0,1]$, we have

\begin{enumerate}
   \item \label{eins} $\zeta'_{g}(t) > 0$ pointwise on $M$, in the sense of symmetric $(0,2)$-tensor fields,
   \item \label{zwei} $\scal_{\zeta_g(t)} > s$ pointwise on $M$.
\end{enumerate}
\end{definition}

\begin{proposition} \label{prop:criterion-hssw} For any $\sigma, s \in \mathbb{R}$  and  $g \in \mathscr{R}_{g_M, s}(M)$ the following holds: Let $(\Omega_0, g_{\Omega_0})$ be a fill-in of $(M , g)$ with $\scal_{g_\Omega} \geq  \sigma$ and $H_{g_\Omega} \geq 0$. 
Then there exists a smooth Riemannian metric $g_{ \Omega_1}$ on 
\[
  \Omega_1 := \Omega_0 \bigcup_{\partial \Omega \approx M} \big(M \times [0,1]\big)
 \]
with the following properties: 
\begin{itemize}
 \item $g_{\Omega_1} = g_{\Omega_0}$ outside an arbitrarily small neighborhood of $M \times [0,1] \subset \Omega_1$, 
 \item $g_{\Omega_1}$ induces the metric $g_M$ on $\partial \Omega_1 = M \times \{1\}$,
 \item $H_{g_{ \Omega_1}} > 0$ along $M \times \{1\}$, 
 \item $\scal_{ \Omega_1} \geq \min\{\sigma, s\} - 1$, 
 \item $
   \int_{\partial \Omega_0} H_{g_{\Omega_0}} {\rm dvol}_{g} < \int_{\partial  \Omega_1}H_{g_{\Omega_1}} {\rm dvol}_{g_M}$. 
\end{itemize}
\end{proposition}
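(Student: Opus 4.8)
The plan is to enlarge $\Omega_0$ by gluing on the collar $M\times[0,1]$ equipped with a metric built from the monotone path $\zeta_g$ furnished by $g\in\mathscr{R}_{g_M,s}(M)$, following the collar construction of Shi--Wang--Wei but with target $(M,g_M)$ in place of a large round sphere. The key point, and the source of the strict inequality on total mean curvatures, is that a collar whose cross‑sectional metrics form a monotone family and whose scalar curvature is a constant strictly below $s$ automatically has strictly increasing total mean curvature of its cross‑sections.

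Concretely, I would first deform $g_{\Omega_0}$ inside an arbitrarily small neighbourhood of $\partial\Omega_0$ — using the Bär--Hanke deformation already invoked in the proof of \cref{thm:mainC} — to a metric $g_{\Omega_0}'$ that agrees with $g_{\Omega_0}$ outside the neighbourhood, still induces $g$ on $\partial\Omega_0$, satisfies $\scal_{g_{\Omega_0}'}>\sigma-1$, and has umbilic boundary $\II_{g_{\Omega_0}'}=\tfrac1n H_{g_{\Omega_0}}\,g$; umbilising does not increase $|\II|^2$, so the scalar curvature bound survives, and the boundary mean curvature, hence $\int_{\partial\Omega_0}H_{g_{\Omega_0}'}\dvol_g$, is unchanged. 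Next, fix a constant $\sigma_*$ with $\min\{\sigma,s\}-1<\sigma_*<s$. Following the arguments in the proof of \cite[Lemmas 3.1, 3.2 and Theorem 4.1]{Shi-Wang-Wei}, but using the monotone path $\zeta_g$ (which plays the role of the straight‑line path to a large round sphere there), I would construct on $M\times[0,L]$ a collar metric, produced via a lapse function and written here in the arc‑length gauge $\bar g=d\rho^2+G_\rho$, such that $G_0=g$ with $\tfrac12\partial_\rho G_\rho|_{\rho=0}=\tfrac1n H_{g_{\Omega_0}}\,g$ (matching $g_{\Omega_0}'$), $G_L=g_M$ with $\partial_\rho G_L>0$, the family $(G_\rho)$ is monotone nondecreasing with each $\scal_{G_\rho}>s$, and $\scal_{\bar g}\equiv\sigma_*$; the existence of such a collar is exactly what the hypothesis $g\in\mathscr{R}_{g_M,s}(M)$ provides, via the solvability of the prescribed‑scalar‑curvature (lapse) equation along a monotone family of background metrics. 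Gluing $\bar g$ to $g_{\Omega_0}'$ along $\partial\Omega_0=M\times\{0\}$: induced metrics and second fundamental forms match, so the $1$‑jets agree and the local flexibility lemma \cite[Theorem 1.2]{BaerHanke1} yields a smooth metric $g_{\Omega_1}$ on $\Omega_1$ (after reparametrising $[0,L]$ to $[0,1]$) with $g_{\Omega_1}=g_{\Omega_0}$ off a small neighbourhood of $M\times[0,1]$, inducing $g_M$ on $\partial\Omega_1$, with $\scal_{g_{\Omega_1}}\ge\min\{\sigma,s\}-1$, and with $H_{g_{\Omega_1}}>0$ along $\partial\Omega_1$ since $\partial_\rho G_L>0$.

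For the total mean curvature increase, write $\II_\rho=\tfrac12\partial_\rho G_\rho\ge0$ and $H_\rho=\tr_{G_\rho}\II_\rho$ for the slices $M\times\{\rho\}$. From the identity $\scal_{\bar g}=\scal_{G_\rho}-|\II_\rho|^2-H_\rho^2-2\partial_\rho H_\rho=\sigma_*$ one gets $\partial_\rho H_\rho+H_\rho^2=\tfrac12(\scal_{G_\rho}-\sigma_*)+\tfrac12(H_\rho^2-|\II_\rho|^2)$, and since $\II_\rho\ge0$ forces $H_\rho^2=(\tr\II_\rho)^2\ge|\II_\rho|^2$ while $\scal_{G_\rho}-\sigma_*>s-\sigma_*>0$, we obtain $\tfrac{d}{d\rho}\int_{M\times\{\rho\}}H_\rho\,\dvol_{G_\rho}=\int_M(\partial_\rho H_\rho+H_\rho^2)\,\dvol_{G_\rho}\ge\tfrac12(s-\sigma_*)\,\vol(M,G_\rho)>0$. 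Integrating over $\rho\in[0,L]$ and using that, because there is no corner at $\partial\Omega_0$, the cross‑sectional integral at $\rho=0$ equals $\int_{\partial\Omega_0}H_{g_{\Omega_0}'}\,\dvol_g=\int_{\partial\Omega_0}H_{g_{\Omega_0}}\,\dvol_g$ while at $\rho=L$ it equals $\int_{\partial\Omega_1}H_{g_{\Omega_1}}\,\dvol_{g_M}$, gives the desired strict inequality.

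The main obstacle is the collar construction in the second step: producing $\bar g$ so that it simultaneously matches the prescribed umbilic data at $\rho=0$, keeps the cross‑sections monotone with $\scal_{G_\rho}>s$ all the way to $G_L=g_M$, and has exactly constant scalar curvature $\sigma_*<s$. This is where the full strength of $g\in\mathscr{R}_{g_M,s}(M)$ — a monotone path, not merely a path, carrying a uniform lower scalar curvature bound along it — is used, and where one must transcribe the lapse‑equation solvability of \cite{Shi-Wang-Wei} to the present monotone, non‑round setting. The umbilic deformation of the first step and the smooth gluing are, by contrast, routine applications of the Bär--Hanke flexibility machinery already used in the proof of \cref{thm:mainC}.
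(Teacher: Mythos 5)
Your overall strategy is the paper's: use the monotone path $\zeta_g$ to build a Shi--Tam/Shi--Wang--Wei type collar over $M\times[0,1]$, exploit the monotonicity of the cross-sectional total mean curvature (your Riccati computation $\partial_\rho H_\rho+H_\rho^2=\tfrac12(\scal_{G_\rho}-\scal_{\bar g})+\tfrac12(H_\rho^2-|\II_\rho|^2)$ is exactly the mechanism the paper imports from \cite[Theorem 4.1]{Shi-Wang-Wei}), and glue with the Bär--Hanke machinery. However, there is a genuine gap in how you propose to attach the collar. You insist on matching the full $1$-jet across $\partial\Omega_0$ by umbilicizing the boundary of $\Omega_0$ and demanding $\tfrac12\partial_\rho G_\rho|_{\rho=0}=\tfrac1n H_{g_{\Omega_0}}\,g$, and you claim this collar is ``produced via a lapse function'' from the hypothesis $g\in\mathscr{R}_{g_M,s}(M)$. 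That is not deliverable: in the lapse construction the metric is $\widetilde g=u^2\,dt^2+\zeta_g(t)$, so the second fundamental form of the bottom slice is $\tfrac{1}{2u}\zeta_g'(0)$ --- its pointwise conformal type is dictated by $\zeta_g'(0)$, which is an arbitrary positive symmetric tensor and in general not proportional to $g$. Choosing the lapse can only rescale it by a function, never make it umbilic, so the very interface condition your smooth (corner-free) gluing rests on cannot be arranged by the mechanism you cite; this is precisely the ``main obstacle'' you flag, and as formulated it does not go through. The paper avoids it entirely: it only matches the \emph{mean} curvatures, by solving the stated Bartnik-type parabolic equation with initial condition $u(\cdot,0)=\overline H_0/H_{g_\Omega}$ (so that $H_0=H_{g_\Omega}$ along the interface), and then smooths the resulting mean-convex corner using \cite[Theorem 4.11]{BaerHanke2}, at the cost of the $-1$ in the scalar curvature bound.

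Two smaller points. First, you keep only $H_{g_{\Omega_0}}\ge 0$, but the initial condition for the lapse divides by $H$ (and your umbilic prescription degenerates where $H=0$, forcing $\partial_\rho G_\rho=0$ there, incompatible with a positive lapse over the strictly monotone path); the paper first applies \cite[Proposition 3.8]{BaerHanke2} to make $H_{g_\Omega}>0$, replacing $\sigma$ by $\sigma-\tfrac12$, and you need this step too. Second, your justification for the umbilicizing deformation (``umbilising does not increase $|\II|^2$, so the scalar curvature bound survives'') is not how \cite[Theorem 3.7]{BaerHanke2} operates --- the scalar curvature loss there is an arbitrarily small additive error and the prescribed second fundamental form is constrained through its trace --- but this step becomes moot once you follow the paper's route of matching mean curvatures only. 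With those corrections (strictly positive $H$ first, lapse over the fixed cross-sections $\zeta_g(t)$ with $\scal_{\widetilde g}=s$, mean-curvature matching plus corner smoothing), your argument coincides with the paper's proof.
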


\begin{proof}
Let $g \in \mathscr{R}_{g_M, s}(M)$.
Let $\zeta_{g} \colon [0,1] \to C^{\infty}({\rm Sym} (T^*  M \otimes T^* M))$ be a smooth path from $g$ to $g_M$ that satisfies properties \ref{eins} and \ref{zwei}.
Consider the smooth metric $\overline g$ on $M \times [0,1]$ defined by
\[
    \overline g = dt^2 + \zeta_g(t) . 
\]
Let $\overline A_t$ be the second fundamental form and $\overline H_t = \tr_{\zeta_g(t)} \overline A_t$ be the mean curvature of $\Sigma_t := M \times \{t\} \subset (M \times [0,1], \overline g)$ with respect to the unit normal field $\frac{\partial}{\partial t}$.
By  \ref{eins}, we have $ \overline A_t > 0$, hence $\overline H_t > 0$ and $\overline H_t^2 - \| \overline A_t \|^2 > 0$ for $t \in [0,1]$

\medskip

Let $(\Omega, g_\Omega)$ be a  fill-in of $(M, g)$ with $\scal_{g_\Omega} \geq  \sigma$ and $H_{g_\Omega} \geq 0$. 
Without loss of generality, we  can assume that $\partial \Omega = M$.
Applying \cite[Proposition 3.8]{BaerHanke2}, we can increase the mean curvature $H_{g_\Omega}$ slightly, while replacing $\sigma$ by $\sigma-\tfrac{1}{2}$. 
Hence, we can assume that $H_{g_\Omega} > 0$.

\medskip

On $M \times [0,1]$, consider the Bartnik-type parabolic PDE
\begin{align*}
 \overline H_t \frac{\partial u}{\partial t} & = u^2 \Delta_{\zeta_g(t)} u + \frac{1}{2} (s - \scal_{\zeta_g(t)}) u^3 + \frac{1}{2} ( \scal_{ \zeta_g(t)} - \scal_{\overline g}) u \\
 u( \cdot, 0) & = \frac{\overline H_0}{H_{g_\Omega}} > 0 . 
\end{align*}
Since, by \ref{zwei}, we have $s - \scal_{\zeta_g(t)} < 0$ for $t \in [0,1]$, this equation has a positive solution $u \colon M \times [0,1] \to \bbR_{>0}$.
Consider the Riemannian metric $\widetilde g = u^2 dt^2 + \zeta_{g}(t)$ on $M \times [0,1]$.

\medskip

Let $H_t = u^{-1} \overline H_t > 0$ be the mean curvature on $\Sigma_t$ induced by $\widetilde g$ with respect to the unit normal field $u^{-1} \frac{\partial}{\partial t}$.
Note that $H_0 = H_{g_{\Omega}}$.
By the computation in the proof of \cite[Theorem 4.1]{Shi-Wang-Wei} and using that $\overline H_t^2 - \|\overline A_t\|^2 >  0$, we have
\[
  \frac{\partial}{\partial t} \int_{\Sigma_t} H_t \, \rm{d vol}_{\zeta_{g}(t)} > 0 .
\]
In particular,
\begin{equation} \label{crucial}
     \int_{\Sigma_1} H_1 {\rm d vol}_{\zeta_g(1)} > \int_{\Sigma_0} H_0 {\rm d vol}_{\zeta_g(0)} = \int_{\partial \Omega_0} H_{g_{\Omega_0}}  {\rm d vol}_{g} .
\end{equation}
Furthermore, we have $\scal_{\widetilde g} = s$.

\medskip

Put 
\[
    \Omega_1 := \Omega_0 \bigcup_{\partial \Omega  = M \times \{0\}} M  \times [0,1]   . 
\]
By \cite[Theorem 4.11]{BaerHanke2} and by the choice of $u(\cdot, 0)$, the mean convex singularity at the gluing region $\partial \Omega_0 = M \times \{0\}$ can be smoothened, that is, there exists a smooth Riemannian metric $g_{\Omega_1}$ on $ \Omega_1$ which is equal to $\widetilde g$ near $M \times \{1\}$ and whose scalar curvature is bounded below by $\min \{ s , \sigma\} -1$.

\medskip

The manifold $( \Omega_1, g_{\Omega_1})$ is a  fill-in of $(M, g_M)$.
Furthermore, by \eqref{crucial}, we get
\[
   \int_{\partial \Omega_0} H_{g_{\Omega_0}} {\rm dvol}_{g} < \int_{\Sigma_1} H_1 {\rm d vol}_{\zeta_g(1)} = \int_{\partial  \Omega_1} H_{g_{ \Omega_1}} {\rm dvol}_{g_M}. \qedhere
\]
\end{proof}

\section{Surgery bending in Euclidean space}\label{sec:euclidean-handles}
In this section we construct smooth hypersurfaces $\overline\Sigma_\varrho$ of size $\varrho$ in Euclidean space which we will later use to define surgery bendings with quantitative control. 
Let $(x_1, \ldots, x_k)$ be the standard coordinates on $\bbR^k$ and let $t$ be the standard coordinate on $[0,1]$.
Equip $B_1^{k}(0)\times[0,1]\subset\mathbb R^{\,k+1}$ with the flat metric $g_{\mathbb R^{k+1}}=\dx_1^2+\dots+\dx_{k}^2 + \dt^2$.
Let $r := \sqrt{x_1^2 + \ldots + x_k^2} \colon \bbR^k \to [0, \infty)$ denote the radial coordinate.

\begin{construction}(Bending hypersurface) \label{Construction: Euclidean handle}
    Let $\varrho\in(0,\frac1{10}]$ be fixed and consider the function 
    \[\xi_\varrho(r)\coloneqq -\frac{4}{3}\varrho \left(\sqrt{r}-\varrho^{2}\right)^{3/2}-4\varrho^3\sqrt{\sqrt{r}-\varrho^2}\]
    from \cref{Lemma: construction of function}.
    Put 
    \[
      a_\varrho\coloneqq -\xi_\varrho(2\sqrt{\varrho})  > 0. 
    \]
    
    \medskip

    Fix a cutoff-function $\eta\colon\bbR\to[0,1]$ such that $\eta\equiv1$ on $(-\infty,0]$, $\eta\equiv0$ on $[1,\infty)$, and $-2\le \eta'\le0$, $|\eta''|\le10$.
    Define  $\eta_\varrho(r)=\eta\left(\tfrac{r-\sqrt \varrho}{\sqrt \varrho}\right)$ and set
    \[f_\varrho(r)\coloneqq\eta_\varrho(r)\bigl(\xi_\varrho(r)+a_\varrho\bigr),
    \]
    see \cref{fig:f_rho}.
    Let $G_\varrho \subset \bbR^k \times \bbR$ be the graph of the function 
    \[B_1^k(0)\setminus B_{\varrho^4}^k(0)\to\bbR,\qquad(x_1,\dots,x_k)\mapsto f_\varrho (r) .\]
    \begin{figure}[ht]
    \begin{tikzpicture}
        % \grid
        \node at (0,0) {\includegraphics[width=0.7\textwidth, trim = 5 100 1000 120, clip]{./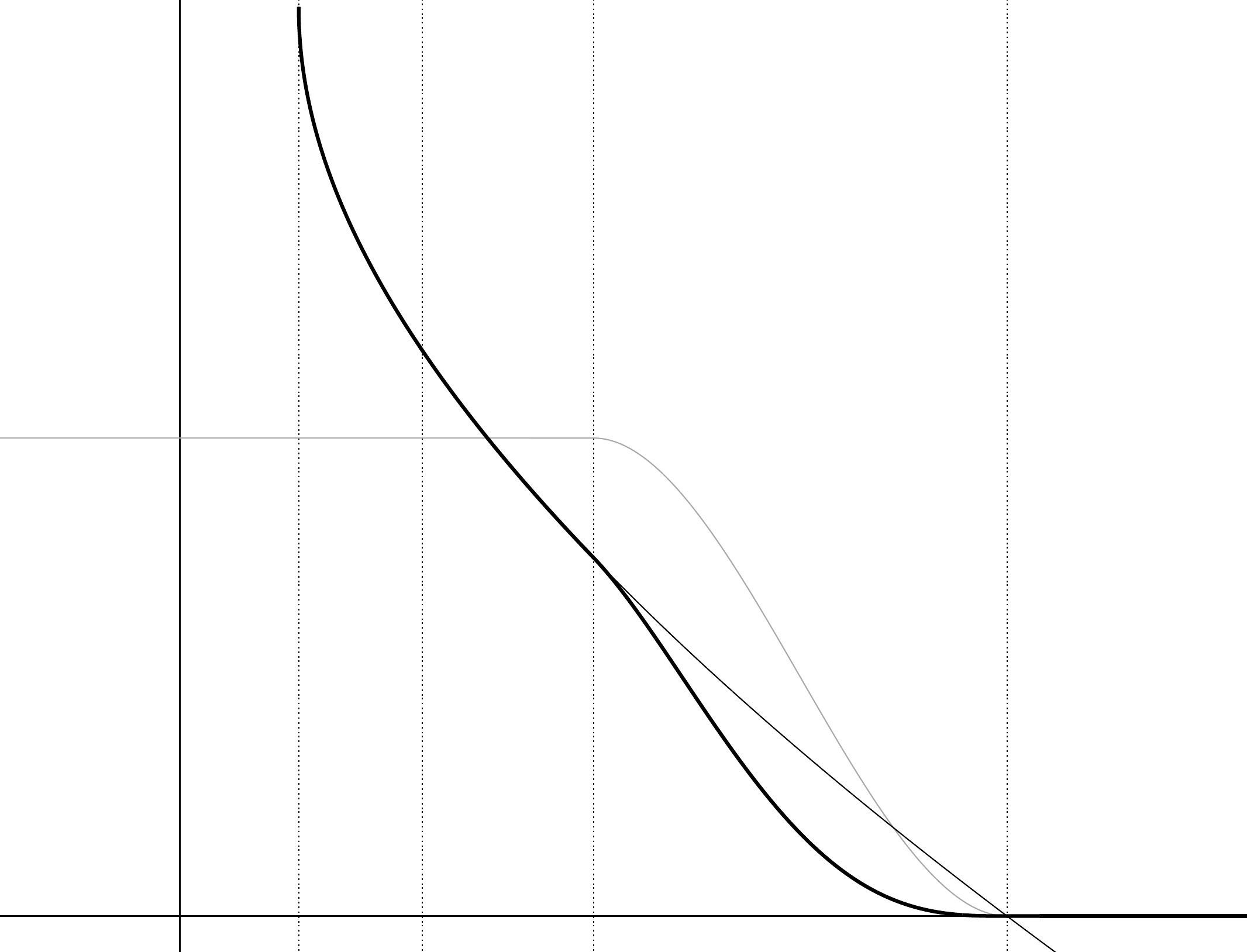}};
        \node at (-2.2,-3.6) {$\varrho^4$};
        \node at (-1.4,-3.6) {$\varrho^2$};
        \node at (-0.2,-3.6) {$\sqrt{\varrho}$};
        \node at (2.6,-3.6) {$2\sqrt{\varrho}$};
        \node at (-1,1) {${f}_\varrho$};
        \node at (2.3,-2.4) {$\xi_\varrho$};
        \node at (1,0) {$\eta_\varrho$};
    \end{tikzpicture}
    \caption{The function ${f}_\varrho$ constructed as the interpolation of $\xi_\rho$ and the constant function using the cutoff-function $\eta_\varrho$.}\label{fig:f_rho}
    \end{figure}

    Finally, put
\[
   \overline \Sigma_{\varrho} := G_\varrho \; \bigcup \; \{ r = \varrho^4 \} \times [a_\varrho, 1] \subset B^k_1(\varrho) \times [0,1].
\]
  Since the inverse of the function $f_{\varrho}|_{[\varrho^4, \varrho^2]}$ is the smooth function $[f_{\varrho}(\varrho^2), a_{\varrho}] \ni t \mapsto  \xi_{\rho}^{-1}(t- a_{\varrho})$ with vanishing derivative at $t = a_{\varrho}$ (see \cref{Lemma: construction of function}), $\overline \Sigma_{\varrho}$ is a $C^1$-hypersurface that is smooth everywhere except $\{ t=a_\varrho\}$.
  This $C^1$-singularity  will be smoothened out at the end of this section.
\end{construction}

\begin{figure}[ht]
    \begin{tikzpicture}
        % \grid
        \node at (0,0) {\includegraphics[width=.95\textwidth]{./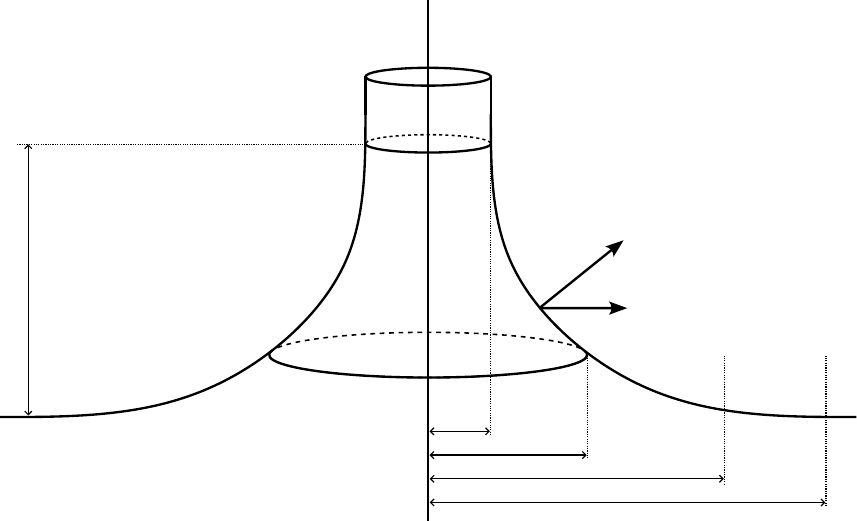}};
        \node at (0.5,-2.2) {$\varrho^4$};
        \node at (1.7,-2.53) {$\varrho^2$};
        \node at (3.35,-2.86) {$\sqrt{\varrho}$};
        \node at (5.05,-3.2) {$2\sqrt{\varrho}$};
        \node at (-5.2,0) {$a_\varrho$};
        \node at (0.7,3.5) {$t$-axis};
        \node at (3,0.6) {$\overline{\nu}$};
        \node at (3.5,-0.8) {$\overline{\nu}^\intercal$};
    \end{tikzpicture}
    \caption{The hypersurface $\overline\Sigma_\varrho$ as the graph of a spherically symmetric function with a cylinder attached.}\label{fig:handle-in-one-fiber}
\end{figure}

\begin{remark}
    Let us motivate the choice of the function $\xi_\varrho$.
    The graph of the functions $ \ell_\varrho(r)=-2\sqrt{2\varrho}\sqrt{r-2\varrho}$ gives rise to spherically symmetric surfaces satisfying
    $$-2\lambda_1=\lambda_2=\dots=\lambda_k$$
    for their principal curvatures, cf. \cref{Lemma: spherical symmetry principal curvatures}.
    In $n=3$ these are the (spatial) Schwarzschild metrics of mass $m=\varrho$.
    In this case the scalar curvature vanishes which does not allow us to absorb the error terms appearing in the handle construction.
    Therefore, we solve the ODE
    $$-4\lambda_1=\lambda_2=\dots=\lambda_k$$  
    which leads to a spherically symmetric surface with strictly positive scalar curvature.
    The corresponding graph function is given precisely by $\xi_\varrho$.
\end{remark}

Let $\overline\nu_\varrho$ be the unit normal vector field of $\overline\Sigma_\varrho$ and denote by $\overline\nu_\varrho^{\intercal}$ the projection of $\overline\nu_\varrho$ onto the tangent space of the slices $B_1^k(0)\times\{t\}$, i.e.,
\begin{align}\label{eq:definition-of-normal-in-Euclidean}
    \overline\nu^\intercal_\varrho=\overline\nu_\varrho-\langle \overline\nu_\varrho,\partial_t\rangle\partial_t.
\end{align}
Note that $\langle \overline \nu_{\varrho},\partial_t\rangle\ge0$ since $\overline\Sigma_\varrho$ is a graph.
We have the following properties.

\begin{proposition}\label{Prop: Euclidean Handle}
    The hypersurfaces $\overline\Sigma_\varrho$ from \cref{Construction: Euclidean handle} satisfy the following properties.
    \begin{enumerate}
        \item $\overline\Sigma_\varrho$ is $\mathrm{O}(k)$-invariant and satisfies
        \begin{align*}
            \overline\Sigma_\varrho\cap (B_{1}^k(0)\times[2\varrho,1])&=\partial B_{\varrho^4}^k(0)\times[2\varrho,1],\\
            \overline\Sigma_\varrho\cap \Bigl(\bigl(B_{1}^k(0)\setminus B_{2\sqrt \varrho}^k(0)\bigr)\times[0,1]\Bigr)&=\bigl(B_{1}^k(0)\setminus B_{2\sqrt \varrho}^k(0)\bigr)\times\{0\}.
        \end{align*}
        \item \label{deux} For $r\in [\varrho^4,\sqrt \varrho]$,  we have 
        \begin{align*}
            H(\overline\Sigma_\varrho)>\frac12|A(\overline\Sigma_\varrho)|\quad\text{if }k\ge2\\
            \scal(\overline\Sigma_\varrho)>\frac13|A(\overline\Sigma_\varrho)|^{2}\quad\text{if }k\ge3.
        \end{align*}
        \item \label{trois} For $r\in[\varrho^4,\varrho^2]$, $k\ge3$, we have 
        \[\min\left\{H(\overline\Sigma_\varrho)^2, \scal(\overline\Sigma_\varrho)\right\}>\tfrac{1}{2} \varrho^{-3}.\]
    \end{enumerate}
    For the final two properties let $\overline C\coloneqq 18k+ {432}$.
    \begin{enumerate}
        \setcounter{enumi}{3}
        \item \label{final1} For $r\ge \varrho^{2}$, we have $|\overline\nu_\varrho^{\intercal}| < \overline C\,\sqrt \varrho$.
        \item \label{final2} For $r\ge \sqrt{\varrho}$, we have $|A(\overline\Sigma_\varrho)| < \overline C\,\varrho^{3/8}$ and $\scal(\overline\Sigma_\varrho) > -\overline C\,\varrho^{3/4}$.
    \end{enumerate}
\end{proposition}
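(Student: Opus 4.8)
The plan is to reduce every assertion to an explicit computation with the rotationally symmetric graph defining $\overline\Sigma_\varrho$, using the closed forms for $\xi_\varrho$ and its derivatives from \cref{Lemma: construction of function} and the formula for the principal curvatures of an $\mathrm{O}(k)$-invariant graph from \cref{Lemma: spherical symmetry principal curvatures}. The first thing to record is that a direct differentiation gives the simplifications
\[
  \xi_\varrho'(r)=-\varrho\,(\sqrt r-\varrho^2)^{-1/2},\qquad \xi_\varrho''(r)=\frac{\varrho}{4\sqrt r}(\sqrt r-\varrho^2)^{-3/2},\qquad 1+(\xi_\varrho'(r))^2=\frac{\sqrt r}{\sqrt r-\varrho^2},
\]
so that on $\{\varrho^4\le r\le\sqrt\varrho\}$, where $\eta_\varrho\equiv 1$ and hence $f_\varrho=\xi_\varrho+a_\varrho$, the $k-1$ spherical principal curvatures of $\overline\Sigma_\varrho$ with respect to $\overline\nu_\varrho$ are all equal to $\mu\coloneqq -\xi_\varrho'/(r\sqrt{1+(\xi_\varrho')^2})=\varrho\,r^{-5/4}>0$, while the radial one equals $-\mu/4$; this is precisely the relation $-4\lambda_1=\lambda_2=\dots=\lambda_k$ motivating the choice of $\xi_\varrho$. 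Throughout I will also use the traced Gauss identity $\scal(\overline\Sigma_\varrho)=H(\overline\Sigma_\varrho)^2-|A(\overline\Sigma_\varrho)|^2$ for a hypersurface in the flat ambient space, equivalently $\scal=(k-1)\lambda_2\bigl(2\lambda_1+(k-2)\lambda_2\bigr)$ in the $\mathrm{O}(k)$-invariant situation.

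Property (1) is then immediate: $\mathrm{O}(k)$-invariance is built into the construction; $f_\varrho$ vanishes for $r\ge 2\sqrt\varrho$ because $\eta_\varrho$ does; and since $\xi_\varrho$ is strictly decreasing with $\xi_\varrho(\varrho^4)=0$ and $\xi_\varrho(2\sqrt\varrho)=-a_\varrho$, one has $0\le f_\varrho\le a_\varrho$ everywhere, so it only remains to check $a_\varrho<2\varrho$ from the closed form of $a_\varrho=-\xi_\varrho(2\sqrt\varrho)$, which holds for all $\varrho\in(0,\tfrac1{10}]$; above height $a_\varrho$ only the cylinder survives. For (2) and (3) I would substitute $\lambda_1=-\mu/4$ and $\lambda_2=\dots=\lambda_k=\mu$ into the identities above to obtain $H=\tfrac{4k-5}{4}\mu$, $|A|^2=\tfrac{16k-15}{16}\mu^2$ and $\scal=\tfrac{(k-1)(2k-5)}{2}\mu^2$; the two inequalities of (2) then reduce to the elementary estimates $(8k-10)^2>16k-15$ for $k\ge 2$ and $24(k-1)(2k-5)>16k-15$ for $k\ge 3$, while (3) follows because on $\{\varrho^4\le r\le\varrho^2\}$ we have $\mu=\varrho r^{-5/4}\ge\varrho^{-3/2}$, hence $H^2\ge\tfrac{49}{16}\varrho^{-3}$ and $\scal\ge\varrho^{-3}$ for $k\ge 3$.

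For (4) and (5) I would split $\{r\ge\varrho^2\}$ into $\{\varrho^2\le r\le\sqrt\varrho\}$ (still the $\xi_\varrho$-graph), the collar $\{\sqrt\varrho\le r\le 2\sqrt\varrho\}$, and $\{r\ge 2\sqrt\varrho\}$ (where $f_\varrho\equiv 0$ and nothing is to be shown). On the first piece the bounds $\sqrt r-\varrho^2\ge\varrho-\varrho^2\ge\tfrac9{10}\varrho$ (for (4)) and $\sqrt r-\varrho^2\ge\varrho^{1/4}-\varrho^2\ge\tfrac12\varrho^{1/4}$ (for (5)), inserted into the closed forms of $\xi_\varrho',\xi_\varrho''$, give $|f_\varrho'|\le C\sqrt\varrho$, $|f_\varrho'|\le C\varrho^{7/8}$ and $|\xi_\varrho''|\le C\varrho^{3/8}$ respectively, and $\scal=\tfrac{(k-1)(2k-5)}{2}\mu^2\ge 0$ there. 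On the collar I would expand by the product rule, $f_\varrho'=\eta_\varrho'(\xi_\varrho+a_\varrho)+\eta_\varrho\xi_\varrho'$ and $f_\varrho''=\eta_\varrho''(\xi_\varrho+a_\varrho)+2\eta_\varrho'\xi_\varrho'+\eta_\varrho\xi_\varrho''$, and bound each summand using $|\eta'|\le 2$, $|\eta''|\le 10$, $a_\varrho=O(\varrho^{11/8})$, $|\xi_\varrho'|=O(\varrho^{7/8})$ and $|\xi_\varrho''|=O(\varrho^{3/8})$ on $\{r\ge\sqrt\varrho\}$; this yields $|\overline\nu_\varrho^\intercal|=|f_\varrho'|/\sqrt{1+(f_\varrho')^2}<\overline C\sqrt\varrho$ and $|A(\overline\Sigma_\varrho)|\le|\lambda_1|+\sqrt{k-1}\,|\lambda_2|<\overline C\varrho^{3/8}$. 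For the scalar curvature lower bound I would use $\scal=(k-1)\lambda_2\bigl(2\lambda_1+(k-2)\lambda_2\bigr)$ together with the observation that on the collar $f_\varrho'\le 0$ — both of its summands are non-positive — so $\lambda_2\ge 0$: if $2\lambda_1+(k-2)\lambda_2\ge 0$ then $\scal\ge 0$; otherwise $0\le\lambda_2\le 2|\lambda_1|/(k-2)$, and maximizing $\lambda_2\mapsto(k-1)\lambda_2\bigl(2|\lambda_1|-(k-2)\lambda_2\bigr)$ over that interval gives $\scal\ge-\tfrac{k-1}{k-2}|\lambda_1|^2\ge-2|\lambda_1|^2>-\overline C\varrho^{3/4}$.

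The routine but genuinely delicate step is the last one: one must keep the constants produced by the Leibniz expansion on the collar small enough that $|A(\overline\Sigma_\varrho)|$ and $|\scal(\overline\Sigma_\varrho)|$ really fit under $\overline C=18k+432$ for every $k$. The two decisive points are that writing $|A|\le|\lambda_1|+\sqrt{k-1}\,|\lambda_2|$ keeps the bound of order $\sqrt k$, and that in the unfavourable sign case the constraint $\lambda_2\le 2|\lambda_1|/(k-2)$ forces the negative part of $\scal$ to be controlled by $|\lambda_1|^2$ alone, up to the harmless factor $\tfrac{k-1}{k-2}\le 2$, rather than by the a priori $k^2$-sized term $(k-1)(k-2)\lambda_2^2$. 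Finally, all curvature computations take place on the smooth locus of $\overline\Sigma_\varrho$, i.e.\ away from the single corner sphere $\{t=a_\varrho\}=\{r=\varrho^4\}$, on either side of which the computation is a plain graph or cylinder computation and the stated inequalities persist; the honest smoothing of that $C^1$-corner is carried out separately at the end of the section.
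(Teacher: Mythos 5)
Your argument follows the paper's proof almost verbatim in structure: $\mathrm O(k)$-invariance and $a_\varrho<2\varrho$ for (i); the ODE from \cref{Lemma: construction of function} plus \cref{Lemma: spherical symmetry principal curvatures} giving $-4\lambda_1=\lambda_2=\dots=\lambda_k$ on $\{\eta_\varrho\equiv1\}$ (your $\mu=\varrho r^{-5/4}$ is just an explicit form of the paper's $\lambda_2$) for (ii)--(iii); and the same region split with Leibniz estimates using $|\eta'|\le2$, $|\eta''|\le10$, $a_\varrho+\xi_\varrho=O(\varrho^{11/8})$, $|\xi_\varrho'|=O(\varrho^{7/8})$, $|\xi_\varrho''|=O(\varrho^{3/8})$ for (iv) and the $|A|$-half of (v). Those parts are correct (the cylinder checks for (ii)--(iii), which you only assert, are one-line computations with $\lambda_1=0$, $\lambda_2=\varrho^{-4}$).

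The step that does not close as written is the scalar-curvature bound in (v). You replace the Gauss identity by the case analysis and quadratic maximization $\scal\ge-\tfrac{k-1}{k-2}\lambda_1^2\ge-2\lambda_1^2$; this inequality is correct, but it does not produce the specific constant $\overline C=18k+432$. The collar bound your Leibniz expansion yields (and the one the paper obtains) is $|\lambda_1|\le|f_\varrho''|\le 15\sqrt2\,\varrho^{3/8}$, so your route only gives $\scal\ge-900\,\varrho^{3/4}$, which is weaker than $-\overline C\,\varrho^{3/4}$ for every $k\le 25$; to get $-2\lambda_1^2>-\overline C\varrho^{3/4}$ at $k=3$ you would need $|f_\varrho''|\le 15.6\,\varrho^{3/8}$, i.e.\ genuinely sharper constants than the ones you quote, and you give no indication of how to obtain them. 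The constant $\overline C$ is in fact calibrated to the other route: with $|\lambda_1|<15\sqrt2\,\varrho^{3/8}$ and $0\le\lambda_2<3\sqrt2\,\varrho^{3/8}$ one has $|A|^2=\lambda_1^2+(k-1)\lambda_2^2<\bigl(450+18(k-1)\bigr)\varrho^{3/4}=\overline C\,\varrho^{3/4}$, and then $\scal=H^2-|A|^2\ge-|A|^2>-\overline C\,\varrho^{3/4}$ follows at once. Note also that your $|A|$-estimate in the sum form $|A|\le|\lambda_1|+\sqrt{k-1}\,|\lambda_2|$ is fine for the first half of (v), but its square likewise overshoots $\overline C\,\varrho^{3/4}$, so for the scalar bound you must estimate $|A|^2$ through the split $\lambda_1^2+(k-1)\lambda_2^2$. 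With that single replacement your proof coincides with the paper's.
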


\begin{proof}
    The first assertion follows directly from the construction, the only thing to verify is that $a_\varrho<2\varrho$ for $\varrho\le \frac1{10}$:
    \[
    a_\varrho = -\xi_\varrho(2\sqrt{\varrho}) = \frac{4}{3}\varrho\underbrace{\left(\sqrt{2\sqrt \varrho}-\varrho^{2}\right)^{3/2}}_{<1} + 4\varrho^{3}\underbrace{\sqrt{\sqrt{2\sqrt \varrho}-\varrho^{2}}}_{<1} < \frac43\varrho + \frac4{100}\varrho < 2\varrho.
    \]

    Concerning \emph{(ii)}, we note that on this inner annulus the cutoff function $\eta_\varrho$ equals $1$, so $f_\varrho=\xi_\varrho+a_\varrho$ and therefore
    $f_\varrho'=\xi_\varrho'$ as well as $f_\varrho''=\xi_\varrho''$.
    By \cref{Lemma: construction of function}, we have
    \[
    4\,\xi_\varrho''=-\frac1r\,\xi_\varrho'(1+\xi_\varrho'^2).
    \]
    Combined with \cref{Lemma: spherical symmetry principal curvatures}, gives 
    \[
    \lambda_2=\dots=\lambda_k=-4\,\lambda_1.
    \]
    On the graph of $f_\varrho$ on $[\varrho^4,\sqrt{\varrho}]$.
    Therefore, we get $\lambda_1 = -(16k-15)^{-1/2}|A|$ and thus
    \begin{align*}
        H(\overline\Sigma_\varrho) &= \lambda_1 - 4(k-1)\lambda_1 = \frac{4k-5}{\sqrt{16k-15}}|A|\\
            &> \frac12|A| &&\text{if }k\ge2\\
        \scal(\overline\Sigma_\varrho) &= 16(k-1)(k-2)\lambda_1^2 -8(k-1)\lambda_1^2\\
            &= 8(k-1)(2k-5)\lambda_1^2 \\
            &= \frac{8\left(k-1\right)\left(2k-5\right)}{16k-15}|A|^{2}\\
            &> \frac13 |A|^2  &&\text{if } k\ge3.
    \end{align*}
    On the other hand, the cylinder of radius $\varrho^4$ has principal curvatures: $\lambda_1=0$ and $\lambda_2=\dots=\lambda_k=\varrho^{-4}\not=0$ and therefore
    \begin{align}\label{Eq: curvature of cylinder}
        \begin{aligned}
            H(\overline\Sigma_\varrho) &=  \sqrt{k-1}\,|A| > \frac12|A| && \text{if }k\ge2\\
            \scal(\overline\Sigma_\varrho) &=  (k-2)|A|^2 > \frac13 |A|^2  &&\text{if } k\ge3.
        \end{aligned}
    \end{align}

    Next, we show \emph{(iii)}. On the cylindrical part, we have $|A| = \sqrt{k-1}\varrho^{-4}$, so the desired inequality holds by \eqref{Eq: curvature of cylinder}. 
    On the \enquote{graphical part}, we observe that both $\xi_\varrho'$ and the function $x\mapsto \frac{-x}{\sqrt{1+x^2}}$ are monotone decreasing and thus, for $r\le\varrho^2$:
    \begin{align*}
        \lambda_2(r) &= -\frac{\xi_\varrho'(r)}{r\sqrt{1+\xi_\varrho'(r)^2}} \ge -\frac{\xi_\varrho'(\varrho^2)}{\varrho^2\sqrt{1+\xi_\varrho'(\varrho^2)^2}}\\
            &= \frac{1}{\varrho^2\sqrt{1+\frac{\varrho^2}{\varrho-\varrho^2}}} \frac{\varrho}{\sqrt{\varrho-\varrho^2}} = \frac{\sqrt{\varrho-\varrho^2}}{\varrho\sqrt{\varrho}}\frac{1}{\sqrt{\varrho-\varrho^2}} = \frac1{\varrho^{3/2}}.
    \end{align*}
    Hence, we obtain
    \begin{align*}
        H ={}& -\frac14\lambda_2 + (k-1)\lambda_2 = (k-\tfrac54)\lambda_2\\
             \ge{}& (k-\tfrac54)\varrho^{-3/2} \\
        \scal ={}& (k-1)(k-2)\lambda_2^2 - \frac12(k-1)\lambda_2^2\\
            \ge{}& (k-1)(k-\tfrac52)\varrho^{-3} 
    \end{align*}
    and if $k\ge3$, then $H> \varrho^{-3/2}$ and $\scal\ge \varrho^{-3}$, which proves \emph{(iii)}.

    \medskip

    Next, we show the estimate from \emph{(iv)} for $r\le \sqrt{\varrho}$. Since $f_\varrho'=\xi_\varrho'$ on $[\varrho^2,\sqrt \varrho]$ and thus $f_\varrho'$ is monotone increasing, we have 
    \[
    0\ge f_\varrho'(r)\ge \xi_\varrho'(\varrho^2)
    =-\frac{\varrho}{\sqrt{\varrho-\varrho^2}}\ge -2\sqrt\varrho,
    \]
    by \cref{Lemma: construction of function}.
    Hence, $|f_\varrho'(r)|\le 2\sqrt{\varrho}$ for $r\in [\varrho^2,\sqrt{\varrho}]$.
    Moreover, 
    \[
    |\overline\nu_\varrho^\intercal|=\frac{|\nabla f_\varrho|}{\sqrt{1+|\nabla f_\varrho|^2}}\le |\nabla f_\varrho| = |f_\varrho'| \le 2\sqrt{\varrho},
    \]
    which proves the claim for $\varrho^2\le r\le \sqrt{\varrho}$.

    \medskip

    Next, let us prove \emph{(iv)} and \emph{(v)} for $r\ge \sqrt{\varrho}$.
    Before we can do so, we need some estimates for the derivatives of $f_\varrho$ on $[\sqrt \varrho,2\sqrt \varrho]$.
    To this end, recall that
    \[
    \xi_\varrho'(r)=-\frac{\varrho}{\sqrt{\sqrt r-\varrho^{2}}},\qquad
    4\xi_\varrho''(r)=-\frac1r\xi_\varrho'(r)\left(1+\xi_\varrho'(r)^2\right),
    \]
    by \cref{Lemma: construction of function}.
    Furthermore, since $\varrho$ was assumed to be smaller than $\tfrac{1}{10}$, we have
    \[\varrho^{1/4}-\varrho^{2} = \frac12\varrho^{1/4} + \varrho^{1/4}\left(\frac12-\varrho^{3/4} \right)\ge \frac12 \varrho^{1/4},
    \]
    and hence
    \begin{align*}
        |\xi_\varrho'(\sqrt \varrho)| ={}& \frac{\varrho}{\sqrt{\varrho^{1/4}-\varrho^{2}}}\le \sqrt{2}\,\varrho^{7/8},\\
        |\xi_\varrho''(\sqrt \varrho)| \le{}& \frac{1}{4\sqrt \varrho}\,|\xi_\varrho'(\sqrt \varrho)|\left(1+\xi_\varrho'(\sqrt \varrho)^2\right)\le \frac{\sqrt{2}}{4}\,\varrho^{3/8}\left(1+2\varrho^{14/8}\right) \le \varrho^{3/8}
    \end{align*}
    Moreover, $\xi_\varrho$ is strictly decreasing and thus, for $r\in[\sqrt \varrho, 2\sqrt \varrho]$, the fundamental theorem of calculus yields
    \begin{align*}
        0\le{}& \xi_\varrho(r)+a_\varrho
        =-\int_{r}^{2\sqrt \varrho}\xi_\varrho'(s)ds \\
        \le{}& (2\sqrt \varrho -r)|\xi_\varrho'(r)|
        \le \sqrt \varrho\,|\xi_\varrho'(\sqrt \varrho)|
        \le \sqrt2\,\varrho^{11/8}.
    \end{align*}
    Therefore, since $\xi_\varrho'$ is negative and strictly increasing we obtain for $r\in[\sqrt \varrho, 2\sqrt \varrho]$,
    \begin{align*}
        |f_\varrho'(r)| \le{}& |\eta(r)\xi_\varrho'(r)+\eta_\varrho'(r)(a_\varrho+\xi_\varrho(r)) |\\
             \le{}& -\xi_\varrho'(\sqrt \varrho)+\frac2{\sqrt \varrho}\,(a_\varrho+\xi_\varrho(\sqrt \varrho))\\
             \le{}& \sqrt{2}\,\varrho^{7/8} + \frac{2\sqrt{2}}{\sqrt \varrho}\,\varrho^{11/8} < 3\sqrt{2}\, \varrho^{7/8}.
    \end{align*}
    By the differential equation $4\xi_\varrho''=-\frac1r\xi_\varrho'\left(1+\xi_\varrho'^2\right)$ and the monotonicity of $\xi_\varrho'$, we observe that $\xi_\varrho''$ is positive and strictly decreasing and therefore, 
    \begin{align*}
        |f_\varrho''(r)|\le{}& |\eta_\varrho(r)\xi_\varrho''(r)|+|2\eta_\varrho'(r)\xi_\varrho'(r)|+|\eta_\varrho''(r)(a_\varrho+\xi_\varrho(r))\\
            \le{}&\xi_\varrho''(\sqrt \varrho)-\frac4{\sqrt \varrho}\,\xi_\varrho'(\sqrt \varrho)+\frac {10}\varrho\,(a_\varrho+\xi_\varrho(\sqrt \varrho))\\
            \le{}& \varrho^{3/8} + 4\sqrt{2}\,\varrho^{3/8} + 10\sqrt{2}\,\varrho^{3/8} < 15\sqrt{2}\, \varrho^{3/8}
    \end{align*}
    for $r\ge \sqrt \varrho$.

    \medskip

    Now, let us turn to proving \emph{(iv)} and \emph{(v)} for $r\ge \sqrt \varrho$.
    We observe that as before, we have $|\overline\nu_\varrho^\intercal|\le |f_\varrho'|$ and therefore
    \[
    |\overline\nu_\varrho^\intercal|\le 3\sqrt{2}\, \varrho^{7/8} = 3\sqrt2\,\varrho^{3/8} \sqrt \varrho < 3\sqrt2\sqrt{\varrho}.
    \]
    Furthermore, by \cref{Lemma: spherical symmetry principal curvatures}, the principal curvatures of the graph of $f_\varrho$ are given by 
    \begin{align*}
        |\lambda_1|={}&\frac{|f_\varrho''|}{(1+f_\varrho'^2)^{3/2}} \le 15\sqrt{2}\, \varrho^{3/8}\\
        \lambda_2=\dots=\lambda_k={}&-\frac{f_\varrho'}{r\sqrt{1+f_\varrho'^2}} {\le} \frac{3\sqrt{2}\,\varrho^{7/8}}{\sqrt \varrho} = 3\sqrt{2}\, \varrho^{3/8}
    \end{align*}
    for $r\in[\sqrt \varrho,2\sqrt \varrho]$.
    In particular, we obtain
    \begin{equation}\label{eq:v-for-A}
        |A(\overline\Sigma_\varrho)|(r)  = \sqrt{\lambda_1^{2}+(k-1)\lambda_2^{2}} \le 3\sqrt{2(k+24)}\,\varrho^{3/8}.
    \end{equation}
    The scalar curvature estimate can be derived from the Gauss equation
    \[
    \scal(\overline\Sigma_\varrho) = H(\overline\Sigma_\varrho)^2 - |A(\overline{\Sigma}_\varrho)|^2\ge- |A(\overline{\Sigma}_\varrho)|^2, 
    \]
    which together with \eqref{eq:v-for-A} yields the desired inequality and hence concludes the proof of \emph{(v)}.
    Lastly, we point out that the analysis still holds after smoothing out the $\overline \Sigma_\varrho$ at the gluing region $t=a_\varrho$, see for instance \cite{BaerHanke1}.
\end{proof}

We close our analysis of the Euclidean handle with the following monotonicity result.
\begin{proposition}\label{lem:monotonicity-of-Euclidean-handles}
    For every sufficiently %$R\in (0,\tfrac{2}{\sqrt{10}}]$
    small $R \in (0, \tfrac{2}{\sqrt{10}}]$ and $\varrho_1\in(0,R^2/4]$, there exists a smooth path of diffeomorphisms $\Phi_{\varrho_1,\varrho}:\overline\Sigma_{\varrho_1}\to\overline\Sigma_\varrho$, $\varrho\in(0,\varrho_1]$, such that $\Phi_{\varrho_1,\varrho_1}=\id_{\overline\Sigma_{\varrho_1}}$ and for the metrics $g_{\varrho,\delta}$ on $\overline\Sigma_\varrho$ induced by the Euclidean metric $\delta+\dt^2$, the assignment
    \[\varrho\mapsto e^{\tfrac12\varrho}\cdot\Phi_{\varrho_1,\varrho}^\ast(g_{\varrho,\delta})\] 
    is strictly monotonically increasing.
\end{proposition}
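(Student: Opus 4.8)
Since every $\overline\Sigma_\varrho$ is $\mathrm{O}(k)$-invariant and $g_{\varrho,\delta}$ is the associated warped-product metric, the plan is to build $\Phi_{\varrho_1,\varrho}$ as an $\mathrm{O}(k)$-equivariant diffeomorphism which merely reparametrizes the profile curve of $\overline\Sigma_\varrho$, and then to check monotonicity coefficient by coefficient. Writing the pulled-back metric in $\mathrm{O}(k)$-symmetric form as $\phi_\varrho^2\,d\tau^2+\rho_\varrho^2\,g_{S^{k-1}}$ on a fixed model $S^{k-1}\times[0,1]$, with $\phi_\varrho>0$ the profile speed and $\rho_\varrho\ge\varrho^4>0$ the profile radius (the inner end of $\overline\Sigma_\varrho$ is the genuine boundary circle of radius $\varrho^4$, so there is no cone point and the warped product is nondegenerate), positive-definiteness of $\partial_\varrho\big(e^{\varrho/2}\Phi_{\varrho_1,\varrho}^\ast g_{\varrho,\delta}\big)$ reduces to the two \emph{scalar} inequalities
\[
\partial_\varrho\log\phi_\varrho>-\tfrac14\qquad\text{and}\qquad \partial_\varrho\log\rho_\varrho>-\tfrac14 .
\]
The point of the weight $e^{\varrho/2}$ is exactly that it converts the tensorial assertion into these robust one-sided bounds, which leaves considerable freedom in the choice of reparametrization.

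\textbf{Choice of $\Phi_{\varrho_1,\varrho}$.} I split the profile of $\overline\Sigma_{\varrho_1}$ (equivalently of $\overline\Sigma_\varrho$) into the flat outer part $r\ge 2\sqrt{\varrho_1}$, the tame part $\varrho_1^2\le r\le 2\sqrt{\varrho_1}$, the neck $\varrho^4\le r\le\varrho_1^2$, and the cylinder. On the flat part $\Phi_{\varrho_1,\varrho}$ is the identity (both handles coincide there), so $\phi_\varrho,\rho_\varrho$ are $\varrho$-independent. On the neck I use the \emph{scaling-adapted} reparametrization $\varphi_\varrho$ determined by
\[
\sqrt{\varphi_\varrho(r_1)}-\varrho^2=\lambda(\varrho)\bigl(\sqrt{r_1}-\varrho_1^2\bigr),\qquad \lambda(\varrho)=\frac{\varrho(1-\varrho)}{\varrho_1(1-\varrho_1)},
\]
which maps $[\varrho_1^4,\varrho_1^2]$ diffeomorphically onto $[\varrho^4,\varrho^2]$ with $\varrho_1^4\mapsto\varrho^4$. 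On the tame part $\varphi_\varrho$ interpolates between this map at $r_1=\varrho_1^2$ and the identity near $r_1=2\sqrt{\varrho_1}$; on the cylinder together with a neighbourhood of the $C^1$-corner $\{t=a_\varrho\}$ I reparametrize by arclength (which, unlike the graph-over-$r$ description, stays nondegenerate where the profile becomes vertical), glued smoothly to the neck map. The corner smoothing is carried out inside this family, so that everything depends smoothly on $\varrho$ and $\Phi_{\varrho_1,\varrho_1}=\id$.

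\textbf{Verification.} On the neck, using $\xi_\varrho'(r)=-\varrho/\sqrt{\sqrt r-\varrho^2}$ from \cref{Lemma: construction of function} one has $1+\xi_\varrho'(r)^2=\sqrt r/(\sqrt r-\varrho^2)$, and with the scaling-adapted $\varphi_\varrho$ the pulled-back coefficients become
\[
\rho_\varrho^2=\varphi_\varrho(r_1)^2,\qquad \phi_\varrho^2\ \propto\ \frac{\lambda(\varrho)\,\varphi_\varrho(r_1)^{3/2}}{r_1\bigl(\sqrt{r_1}-\varrho_1^2\bigr)} .
\]
Because $\lambda'(\varrho)>0$ and $\sqrt{r_1}-\varrho_1^2\ge0$ one gets $\partial_\varrho\varphi_\varrho>0$ and $\partial_\varrho\log\lambda>0$, so $\partial_\varrho\log\rho_\varrho>0$ and $\partial_\varrho\log\phi_\varrho>0$ on the neck: the scaling-adapted choice makes every term have the right sign. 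On the cylinder $\rho_\varrho=\varrho^4$ gives $\partial_\varrho\log\rho_\varrho=4/\varrho>0$, while $\partial_\varrho\log\phi_\varrho$ is controlled by $a_\varrho'/(1-a_\varrho)$, which tends to $0$ as $\varrho\to0$ by the explicit form of $a_\varrho=-\xi_\varrho(2\sqrt\varrho)$ and hence exceeds $-\tfrac14$ once $R$, and therefore $\varrho_1$, is small. On the tame part $\rho_\varrho(r_1)=\varphi_\varrho(r_1)\ge\varrho^2$, and the two bounds follow from the favourable sign of the cross-term $\xi_\varrho'\cdot\partial_\varrho\xi_\varrho'$ there together with the smallness estimates $|f_\varrho'|\lesssim\varrho^{7/8}$, $|\partial_\varrho f_\varrho'|\lesssim\varrho^{-1/8}$ underlying \cref{Prop: Euclidean Handle}\emph{(iv)}. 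Combining the four regions yields both scalar inequalities everywhere, for all $\varrho\in(0,\varrho_1]$, once $R$ is small enough.

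\textbf{Main obstacle.} The computation on the neck is short and is the heart of the matter; the real technical work is the bookkeeping around the $C^1$-corner $\{t=a_\varrho\}$, where the graph-over-$r$ parametrization degenerates — one must switch to arclength there and glue it smoothly both to the neck map and across the (already smoothed) corner, carrying this out in a family that is jointly smooth in $\varrho$ and equals the identity at $\varrho=\varrho_1$. Because the two scalar inequalities are strict, they are stable under the arbitrarily $C^2$-small corner smoothing, so it suffices to verify them for the piecewise-smooth model, exactly as in the proof of \cref{Prop: Euclidean Handle}.
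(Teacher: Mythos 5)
Your reduction to the two scalar inequalities $\partial_\varrho\log\phi_\varrho>-\tfrac14$, $\partial_\varrho\log\rho_\varrho>-\tfrac14$ is correct, and your explicit scaling-adapted map on the neck is a nice computation: with $\sqrt{\varphi_\varrho}-\varrho^2=\lambda(\varrho)(\sqrt{r_1}-\varrho_1^2)$ one indeed gets $\phi_\varrho^2=\lambda\varphi_\varrho^{3/2}/\bigl(r_1(\sqrt{r_1}-\varrho_1^2)\bigr)$ and both logarithmic derivatives are positive there. However, the proposal has genuine gaps exactly where the quantitative content of the statement lives. First, on your ``tame'' region $[\varrho_1^2,2\sqrt{\varrho_1}]$ the map is an unspecified interpolation, and the required bound is on $\partial_\varrho\log\bigl[(1+f_\varrho'(\varphi_\varrho)^2)(\varphi_\varrho')^2\bigr]$, which contains $\partial_\varrho\varphi_\varrho'$ and the moving-basepoint term $2f_\varrho'f_\varrho''(\varphi_\varrho)\,\partial_\varrho\varphi_\varrho$; near the image point $r=\varrho^2$ one has $f_\varrho''\sim\varrho^{-3/2}$ and $\partial_\varrho\varphi_\varrho\sim\varrho$, so this term is of order one and of unfavourable sign, and nothing in the proposal controls it. The smallness estimates you quote are also off on part of that region: $|f_\varrho'|\lesssim\varrho^{7/8}$ and $|\partial_\varrho f_\varrho'|\lesssim\varrho^{-1/8}$ hold only for $r\ge\sqrt\varrho$, while on $[\varrho^2,\sqrt\varrho]$ one only has $|f_\varrho'|\lesssim\sqrt\varrho$ and $|\partial_\varrho f_\varrho'|\sim\varrho^{-1/2}$. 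So the interpolation region requires a concrete choice of $\varphi_\varrho$ together with estimates of the same nature as those in \cref{Lemma length estimate for handles}; asserting ``favourable signs plus smallness'' does not yet do it.

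Second, the junction between your graph-based neck map and the affine/arclength map on the cylinder is not a small-perturbation issue, so your appeal to stability of strict inequalities under the $C^2$-small corner smoothing does not cover it. In the nondegenerate coordinate $t$ the neck map has derivative $\approx(\varrho/\varrho_1)^3\sqrt{\lambda(\varrho)}$ at the corner (from $t-a_\varrho=-4\varrho^3\sqrt{u}-\tfrac43\varrho\,u^{3/2}$ with $u=\lambda u_1$, cf.\ \cref{Lemma: construction of function}), whereas the cylinder map has derivative $\approx(1-a_\varrho)/(1-a_{\varrho_1})\approx1$; for $\varrho\ll\varrho_1$ these disagree by a large factor, so an explicit interpolating family, smooth in $\varrho$ and with its own $\varrho$-derivative estimate, is needed and is absent. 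For comparison, the paper's proof avoids both issues by a different device: every profile is parametrized by arclength, so the $d\tau^2$-coefficient is identically $1$ except in a bump region placed inside the cylinder, where the warping function is constant; the spherical coefficient is then monotone by a soft nesting observation, and the only quantitative input is the length-derivative bound $|\partial_\varrho d_\varrho|<\varrho^{1/4}$ of \cref{Lemma length estimate for handles}. Your scheme relocates, but does not eliminate, that analytic work, and in its present form it is not carried out.
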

We remark that the exponent $\tfrac12$ can be replaced by an arbitrarily small positive constant.

\medskip

Before we can prove \cref{lem:monotonicity-of-Euclidean-handles}, we need some preparation.
Since the Euclidean handle $\overline\Sigma_\varrho\subseteq B_{1}^k(0)\times[0,1]$ is rotationally symmetric, it can be obtained by rotating a curve $\overline\gamma_\varrho$.
Hence, we may write the metric $g_{\overline\Sigma_\varrho}$ on $\overline \Sigma_\varrho$ induced by the Euclidean metric $\delta+\dt^2$ in warped product form
\begin{equation} \label{warpedprod}
g_{\varrho,\delta}=d\tau^2+r_\varrho(\tau)^2g_{S^{k-1}},
\end{equation}
where $\tau$ is the arc-length along the generating curve $\overline\gamma_\varrho$ and $r_\varrho(\tau)$ is the radius-function.
The parametrization is such that $r_\varrho$ is monotonically non-increasing.
For $R\in(0,\tfrac{2}{\sqrt{10}}]$ and all $\varrho<R^2/4$, we denote by $d_\varrho$ the length of the curve $\overline\gamma_\varrho$ until radius $R$. 
Since $\Sigma_\varrho$ ends in a straight cylinder, the radial function satisfies $r_\varrho|_{[d_\varrho-1+a_\varrho,d_\varrho]} = \varrho^4$.
We have the following estimate on $d_\varrho$ and its $\varrho$-derivative.

\begin{lemma}\label{Lemma length estimate for handles}
    There exists  $R \in (0, \tfrac{2}{\sqrt{10}}]$, depending only on $n$, such that for every $\varrho<R^2/4$, we have 
    \begin{align*}
        | \partial_\varrho d_\varrho | < \varrho^{1/4} \qquad\text{and}\qquad 1<d_\varrho<1+R.
    \end{align*}
\end{lemma}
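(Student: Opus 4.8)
The plan is to cut $\overline\gamma_\varrho\cap\{r\le R\}$ into its three natural pieces and estimate each separately. Since $\varrho<R^2/4$ gives $2\sqrt\varrho<R$, all the bending happens inside $\{r\le 2\sqrt\varrho\}$, so $\overline\gamma_\varrho\cap\{r\le R\}$ consists of a flat segment over $[2\sqrt\varrho,R]$ of length $R-2\sqrt\varrho$, the graph of $f_\varrho$ over $[\varrho^4,2\sqrt\varrho]$ of length $I(\varrho):=\int_{\varrho^4}^{2\sqrt\varrho}\sqrt{1+f_\varrho'(r)^2}\,dr$, and the cylinder $\{r=\varrho^4\}\times[a_\varrho,1]$ of length $1-a_\varrho$, so that
\begin{equation*}
  d_\varrho=(R-2\sqrt\varrho)+I(\varrho)+(1-a_\varrho).
\end{equation*}
The two-sided bound then follows at once: $f_\varrho$ is monotone non-increasing on $[\varrho^4,2\sqrt\varrho]$ (on $[\varrho^4,\sqrt\varrho]$ one has $f_\varrho'=\xi_\varrho'\le 0$, and on $[\sqrt\varrho,2\sqrt\varrho]$ the identity $f_\varrho'=\eta_\varrho\xi_\varrho'+\eta_\varrho'(\xi_\varrho+a_\varrho)$ exhibits $f_\varrho'$ as a sum of two non-positive terms, using $\xi_\varrho+a_\varrho\ge 0$ there), with $f_\varrho(\varrho^4)=a_\varrho$ and $f_\varrho(2\sqrt\varrho)=0$; hence $\int_{\varrho^4}^{2\sqrt\varrho}|f_\varrho'|\,dr=a_\varrho$, and $1\le\sqrt{1+f_\varrho'^2}\le 1+|f_\varrho'|$ gives $2\sqrt\varrho-\varrho^4\le I(\varrho)\le 2\sqrt\varrho-\varrho^4+a_\varrho$, so $R+1-\varrho^4-a_\varrho\le d_\varrho\le R+1-\varrho^4$. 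Since $a_\varrho<2\varrho$ by the proof of \cref{Prop: Euclidean Handle} and $\varrho<R^2/4$, we have $\varrho^4+a_\varrho<R$, whence $1<d_\varrho<1+R$.

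The substantive part is the derivative bound, and here the crux is the inner piece. I would split $I=I_1+I_2$ at $r=\sqrt\varrho$, the point where the cutoff becomes active. On $[\varrho^4,\sqrt\varrho]$ we have $f_\varrho'=\xi_\varrho'$ and, by \cref{Lemma: construction of function}, $1+\xi_\varrho'(r)^2=\sqrt r/(\sqrt r-\varrho^2)$; the arc-length density $\sqrt{\sqrt r/(\sqrt r-\varrho^2)}$ is integrable but blows up as $r\to\varrho^4$ (the graph is vertical there) and the lower limit of integration moves with $\varrho$, so one cannot simply differentiate under the integral sign. The key device is the substitution $r=(\varrho\cosh\theta)^4$, which yields
\begin{equation*}
  I_1(\varrho)=4\varrho^4\int_0^{T(\varrho)}\cosh^4\theta\,d\theta,\qquad T(\varrho):=\operatorname{arcosh}\bigl(\varrho^{-7/8}\bigr),
\end{equation*}
pinning the lower limit at $0$ and removing the singularity. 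Using the elementary antiderivative $\int_0^T\cosh^4\theta\,d\theta=\tfrac1{32}\sinh 4T+\tfrac14\sinh 2T+\tfrac38 T$ together with $\cosh T(\varrho)=\varrho^{-7/8}$ one finds $I_1(\varrho)=\varrho^{1/2}+\varrho^{9/4}+O(\varrho^4\log(1/\varrho))$, and differentiating $I_1=4\varrho^4\,P(T(\varrho))$ gives $\partial_\varrho I_1=\tfrac4\varrho I_1+4\varrho^{1/2}T'(\varrho)$ with $4\varrho^{1/2}T'(\varrho)=-\tfrac72\varrho^{-1/2}(1-\varrho^{7/4})^{-1/2}$; the two contributions are each of size $\varrho^{-1/2}$ and their leading parts partly cancel, leaving $\partial_\varrho I_1(\varrho)=\tfrac12\varrho^{-1/2}+O(\varrho^{5/4})$.

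The rest is bookkeeping. On $[\sqrt\varrho,2\sqrt\varrho]$ the bounds $|f_\varrho'|\le 3\sqrt2\,\varrho^{7/8}$ and $|f_\varrho''|\le 15\sqrt2\,\varrho^{3/8}$ from the proof of \cref{Prop: Euclidean Handle}, together with an elementary estimate $|\partial_\varrho f_\varrho'|\le C\varrho^{-1/8}$ obtained by differentiating $f_\varrho'=\eta_\varrho\xi_\varrho'+\eta_\varrho'(\xi_\varrho+a_\varrho)$ term by term, make Leibniz's rule applicable; since $f_\varrho'(2\sqrt\varrho)=0$ and $|f_\varrho'(\sqrt\varrho)|=O(\varrho^{7/8})$, it yields $\partial_\varrho I_2(\varrho)=\varrho^{-1/2}-\tfrac12\varrho^{-1/2}+O(\varrho^{5/4})=\tfrac12\varrho^{-1/2}+O(\varrho^{5/4})$. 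Finally $a_\varrho=-\xi_\varrho(2\sqrt\varrho)=O(\varrho^{11/8})$, so $\partial_\varrho a_\varrho=O(\varrho^{3/8})$. Summing the four contributions, the $\varrho^{-1/2}$ terms cancel,
\begin{equation*}
  \partial_\varrho d_\varrho=-\varrho^{-1/2}+\partial_\varrho I_1+\partial_\varrho I_2-\partial_\varrho a_\varrho=\bigl(-1+\tfrac12+\tfrac12\bigr)\varrho^{-1/2}+O(\varrho^{3/8})=O(\varrho^{3/8}),
\end{equation*}
so $|\partial_\varrho d_\varrho|\le C_0\varrho^{3/8}$ for a constant $C_0$ depending only on $n$. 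Choosing $R\le\min\{2/\sqrt{10},\,2C_0^{-4}\}$ then forces $C_0\varrho^{3/8}<\varrho^{1/4}$ for all $\varrho<R^2/4$, which completes the proof.

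I expect the only genuine obstacle to be the inner integral $I_1$: the arc-length density is singular precisely at the inner radius $r=\varrho^4$ and the interval of integration contracts to a point as $\varrho\to 0$, so neither a crude length comparison nor differentiation under the integral sign will do. The $\cosh$-substitution is what unlocks it — it simultaneously normalizes the domain and desingularizes the integrand — after which everything reduces to the antiderivative of $\cosh^4$ and to tracking the exact cancellation of the $\varrho^{-1/2}$ terms across the flat, graphical and cylindrical pieces.
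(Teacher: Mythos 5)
Your proof is correct, and it takes a genuinely different route from the paper's. The paper splits the length at $4\varrho^4$ and at the \emph{fixed} radius $R$, so that no large terms ever appear: the contribution of the singular inner piece $[\varrho^4,4\varrho^4]$ is handled by an explicit antiderivative of $\sqrt{1+\xi_\varrho'^2}$ (giving $O(\varrho^3)$, together with an $O(\varrho^3)$ boundary term at $4\varrho^4$), the bulk term $\int_{4\varrho^4}^{R}|f_\varrho'|\,|\partial_\varrho f_\varrho'|\,dr$ is estimated piecewise on $[4\varrho^4,\varrho^2]$, $[\varrho^2,\varrho]$, $[\varrho,\sqrt\varrho]$, $[\sqrt\varrho,2\sqrt\varrho]$ (giving $O(\sqrt\varrho)$), and $\partial_\varrho a_\varrho\lesssim\varrho^{3/8}$ is computed directly; the two-sided bound on $d_\varrho$ comes from comparing with axis-parallel and straight segments. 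You instead split at the $\varrho$-dependent radii $\sqrt\varrho$ and $2\sqrt\varrho$, desingularize the inner integral by the substitution $r=(\varrho\cosh\theta)^4$ (which is essentially the closed form behind the paper's antiderivative), and then must track an exact cancellation of three terms of size $\varrho^{-1/2}$ coming from the moving split points and the flat piece; you do track it correctly (in particular $f_\varrho'(2\sqrt\varrho)=0$ and $f_\varrho'(\sqrt\varrho)=\xi_\varrho'(\sqrt\varrho)=O(\varrho^{7/8})$ are exactly what make the boundary terms cancel to the stated order), and your bound $|\partial_\varrho f_\varrho'|\lesssim\varrho^{-1/8}$ on $[\sqrt\varrho,2\sqrt\varrho]$ is verifiable term by term from the bounds on $\eta_\varrho,\eta_\varrho',\xi_\varrho,\xi_\varrho'$ already recorded in the paper. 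What your approach buys is a sharper asymptotic ($\partial_\varrho d_\varrho=O(\varrho^{3/8})$ with the leading contribution identified as $-\partial_\varrho a_\varrho$) and a cleaner quantitative two-sided bound $R+1-\varrho^4-a_\varrho\le d_\varrho\le R+1-\varrho^4$; what the paper's choice of split points buys is robustness, since no delicate cancellation of $\varrho^{-1/2}$ terms is needed. One small repair: the step ``$a_\varrho=O(\varrho^{11/8})$, so $\partial_\varrho a_\varrho=O(\varrho^{3/8})$'' is not a valid inference as written (one cannot differentiate an $O$-estimate); but since $a_\varrho=-\xi_\varrho(2\sqrt\varrho)$ is explicit, differentiating it directly — as the paper does — gives $\partial_\varrho a_\varrho\lesssim\varrho^{3/8}$, so this is a presentational slip rather than a gap.
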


For the following proof, we use the notation $A\lesssim B$ to mean that there exists a positive constant $C$ just depending on $n$ such that $A\le C\cdot B$.

\begin{proof}
    Recall the function $f_\varrho(r)\coloneqq\eta_\varrho(r)\bigl(\xi_\varrho(r)+a_\varrho\bigr)$.
    The length $d_\varrho$ of $\overline\gamma_\varrho$ is given
    \begin{align*}
        d_\varrho=\int_{\varrho^4}^{4\varrho^4}\sqrt{1+f_\varrho'(r)^2}dr + \int_{4\varrho^4}^{R}\sqrt{1+f_\varrho'(r)^2}dr+(1-a_\varrho),
    \end{align*}
    and we thus need to compute
    \begin{align*}
        \partial_\varrho d_\varrho={}&\underbrace{\partial_\varrho\int_{\varrho^4}^{4\varrho^4}\sqrt{1+f_\varrho'(r)^2}dr}_{\eqqcolon A_0}  
        \underbrace{-16\varrho^3\sqrt{1+f_\varrho'(4\varrho^4)^2}}_{\eqqcolon A_1} + \underbrace{\int_{4\varrho^4}^{R}\frac{f_\varrho'(r)\partial_\varrho f_\varrho'(r)}{\sqrt{1+f_\varrho'(r)^2}} dr}_{\eqqcolon A_2}  -\partial_\varrho a_\varrho
    \end{align*}
    We estimate $A_0$, $A_1$ and $A_2$ and $\partial_\varrho a_\varrho$ individually, starting with $A_0$.
    We note that an antiderivative of $\sqrt{1+f'_\varrho(r)^2}$ is given by
    \begin{align*}
        r^{1/4}\sqrt{\sqrt{r}-\varrho^{2}}\left(\sqrt{r}+\tfrac{3}{2}\varrho^{2}\right)
        +\frac{3}{2}\varrho^{4}\,\operatorname{arsinh}\left(\frac{\sqrt{\sqrt{r}-\varrho^{2}}}{\varrho}\right).
    \end{align*}
    Since $\sqrt{\varrho^4}-\varrho^{2}=0$ and $\sqrt{\sqrt{4\varrho^4}-\varrho^{2}}=\varrho$, we get 
    \[
| A_0 | = \left|  \partial_\varrho\int_{\varrho^4}^{4\varrho^4}\sqrt{1+f_\varrho'(r)^2}dr\right|\lesssim \varrho^3.
    \]
    Next, we turn to $A_1$.
    Since
    \begin{align*}
 f_\varrho'(4\varrho^4)=\xi_\varrho'(4\varrho^4)=-\frac{\varrho}{\sqrt{\sqrt{4\varrho^4}-\varrho^{2}}}=-1,
    \end{align*}
    we obtain $| A_1| =| -16\sqrt 2 \varrho^3| \lesssim \varrho^3$.

    \medskip

    Next, we compute
     \begin{align*}
         \begin{split}
             \partial_\varrho a_\varrho={}&\partial_\varrho\left(\frac{4}{3}\varrho\Big(\sqrt2 \varrho^{1/4}-\varrho^{2}\Big)^{3/2} + 4\varrho^{3}\sqrt{\sqrt2 \varrho^{1/4}-\varrho^{2}}\right)\\
             ={}&\frac43 \Big(\sqrt2 \varrho^{1/4}-\varrho^{2}\Big)^{3/2}+2\varrho\Big(\sqrt2 \varrho^{1/4}-\varrho^{2}\Big)^{1/2}\Big( \frac{\sqrt 2}{4} \varrho^{-3/4}-2\varrho\Big)\\
             &+12\varrho^2\sqrt{\sqrt2 \varrho^{1/4}-\varrho^{2}}+2\varrho^3\Big(\sqrt2 \varrho^{1/4}-\varrho^{2}\Big)^{-1/2}\Big(  \frac{\sqrt 2}{4} \varrho^{-3/4}-2\varrho  \Big)\\
             \le{}&11\varrho^{3/8} + 4\varrho^{1+1/8-3/4} + 24\varrho^{2+1/8} + 2\varrho^{3-1/8-3/4}.
         \end{split}
     \end{align*}

Now we turn to $A_2$. We have
\begin{align*}
   | A_2 | \le 
     \int_{4\varrho^4}^{R}|f_\varrho'(r)||\partial_\varrho f_\varrho'(r)| dr.
\end{align*}
Note that $f_\varrho=\xi_\varrho$ for $r\le \sqrt\varrho$.
We obtain
\begin{align*}
    |f_\varrho'(r)|\cdot|\partial_\varrho f_\varrho'(r)|
   \le& 1\cdot 2\varrho^{-1} &&\text{for  $r\in[4\varrho^4,\varrho^2]$},\\
     |f_\varrho'(r)|\cdot|\partial_\varrho f_\varrho'(r)|
     \le& 2\sqrt\varrho \cdot 2\varrho^{-1/2} &&\text{for $r\in[\varrho^2,\varrho]$},\\
    |f_\varrho'(r)|\cdot|\partial_\varrho f_\varrho'(r)|
     \le& 2\varrho^{3/4} \cdot 2\varrho^{-1/4} &&\text{for $r\in[\varrho,\sqrt\varrho]$}.
\end{align*}
    Moreover, we have for $r\in[\sqrt\varrho,2\sqrt\varrho]$:
    \begin{align*}
        &|f_\varrho'(r)|\cdot|\partial_\varrho f_\varrho'(r)| \\
        &\quad\le \Big( |\eta_\varrho' || \xi_\varrho+a_\varrho|+|\eta_\varrho|| \xi_\varrho'| \Big)\\
        &\qquad\qquad \cdot
        \Big( |\partial_\varrho\eta_\varrho' || \xi_\varrho+a_\varrho|+|\partial_\varrho \eta_\varrho|| \xi_\varrho'| 
        + |\eta_\varrho' || \partial_\varrho\xi_\varrho+\partial_\varrho a_\varrho|+|\eta_\varrho|| \partial_\varrho\xi_\varrho'| \Big)\\
        &\quad\lesssim\Big ( \varrho^{-1/2} \cdot \varrho^{5/4}+1\cdot \varrho^{3/4} \Big)\\
        &\qquad\qquad\cdot\Big(\varrho^{-2} \cdot \varrho^{5/4} + \varrho^{-3/2}\cdot \varrho^{3/4}
        +\varrho^{-1/2}\cdot ( 1+1) + 1\cdot \varrho^{-1/8} \Big)\\
        &\quad\lesssim \varrho^{3/4}\cdot\varrho^{-3/4},
    \end{align*}
    where we used $\partial_\varrho a_\varrho\le 1$ and that on $[\sqrt\varrho, 2\sqrt\varrho]$ by the fundamental theorem of calculus
    \begin{align*}
        |f_\varrho|\le \sqrt\varrho| \xi'(\sqrt\varrho)|\le 2\sqrt \varrho \cdot \varrho^{3/4}=2\varrho^{5/4}.
    \end{align*}

    Finally, for $r\ge2\varrho$, $f_\varrho\equiv0$ and therefore, putting the pieces together and integrating, we obtain
    \[
        | A_2| \lesssim \sqrt{\varrho} . 
    \] 
    Hence, 
    \[ 
      | \partial_\varrho d_\varrho| \lesssim \sqrt{\varrho}. 
    \]
    Since the curve $\overline\gamma_\varrho$ is shorter than the one consisting of straight ones parallel to the coordinate-axes and longer than the straight line connecting its end points, we get 
    \[1\le \sqrt{1+R^2}<d_\varrho< 1+R\qedhere\]
\end{proof}

\begin{proof}[Proof of \cref{lem:monotonicity-of-Euclidean-handles}]
    Fix the bump function $\beta(x)=\eta(4x)\eta(-4x)$, where $\eta$ is the cut-off function introduced earlier satisfying $-2\le \eta'\le0$ and $|\eta''|<10$, and we note that $\operatorname{supp}\beta \subseteq[-\frac14,\frac14]$.
    Next, we define a smooth family or reparameterizations $\phi_{\varrho_1,\varrho}:[0,d_{\varrho_1}]\to[0,d_\varrho]$ for $\varrho\in(0,\varrho_1]$ by
    \begin{align}\label{reparametrization family}
        \phi_{\varrho_1,\varrho}(\tau)=\tau-q_{\varrho_1,\varrho}\int_0^\tau\beta\left(s-\frac12\right)ds
    \end{align}
    where $q_{\varrho_1,\varrho}=(d_{\varrho_1}-d_{\varrho}) \left(\int_{\mathbb R}\beta(s)ds\right)^{-1}$.
    We now define a diffeomorphism
    \[
    \overline \Phi_{\varrho_1,\varrho}:\overline\Sigma_{\varrho_1} \longrightarrow \overline\Sigma_\varrho,\quad (\tau, x)\mapsto (\phi_{\varrho_1,\varrho}(\tau), x)
    \]
    where $x$ denotes the spherical coordinates in the warped product. 
    Note that 
    \[\overline\Phi_{\varrho_1,\varrho}^*g_{\varrho,\delta}(\tau,x) = (\partial_\tau\phi_{\varrho_1,\varrho}(\tau))^2\dtau^2 + r_\varrho(\phi_{\varrho_1,\varrho}(\tau))^2g_{S^{k-1}}.\]
    A direct inspection of the warping functions $r_\varrho$ gives for any $\varrho\ge \widetilde \varrho$
    \begin{align*}
        r_{\varrho}\left(\phi_{\varrho_1, \varrho}(\tau)\right) \le r_{\widetilde\varrho}\left(\phi_{\varrho_1,\widetilde \varrho}(\tau)\right),\qquad \tau\in[0,d_{\varrho_1}].
    \end{align*}
This implies 
    \begin{align*}
        \partial_\varrho  r_\varrho(\phi_{\varrho_1,\varrho}(\tau))
        =&(\partial_\varrho r_\varrho)(\phi_{\varrho_1,\varrho}(\tau))+r_\varrho'(\phi_{\varrho_1,\varrho}(\tau))\partial_\varrho \phi_{\varrho_1,\varrho}(\tau)\ge 0
    \end{align*}
    where we used that $r_\varrho'$ vanishes on the support of $\partial_\varrho \phi_{\varrho_1,\varrho}(\tau)$.
    Next, we observe that
    \begin{align*}
        \partial_\tau \phi(\tau)=1-(d_{\varrho_1} - d_{\varrho})\beta\left(\tau-\frac12\right)
    \end{align*}
    and therefore 
    \begin{align*}
        \partial_\varrho\partial_\tau\phi(\tau) = \beta\left(\tau-\frac12\right) \partial_\varrho d_\varrho - (d_{\varrho_1} - d_\varrho)\beta'(\tau-\frac12)>-\frac12\partial_\tau\phi(\tau)
    \end{align*}
    {where we used that $(d_{\varrho_1} - d_{\varrho})\ll1$ and that $\beta'$ is bounded.}
     Consequently, we obtain for $\varrho\in[\varrho_0,\varrho_1]$
    \begin{align*}
        \partial_\varrho \overline\Phi_{\varrho_1,\varrho}^*g_{\varrho,\delta}(\tau,x) \ge-\frac12 \overline\Phi_{\varrho_1,\varrho}^*g_{\varrho,\delta}(\tau,x)
    \end{align*}
    which finishes the proof.
\end{proof}

To conclude the construction of $\overline \Sigma$, we will  now smoothen the $C^1$-singularity of $\overline \Sigma_{\varrho}$ at $\{ t = a_{\varrho}\}$ while preserving  \cref{Prop: Euclidean Handle} and \cref{Lemma length estimate for handles}, hence also \cref{lem:monotonicity-of-Euclidean-handles}.
Fix some $R \in (0, \tfrac{2}{\sqrt{10}}]$ so that \cref{Lemma length estimate for handles} holds for this $R$.
Consider the smooth manifold 
\[
   V = \{ (\varrho , t) \mid \varrho \in (0, R^2 /4) ,t \in (f_{\varrho}(\varrho^2) , a_{\varrho}] \} \subset (0, R^2/4) \times (0,1]
\]
with non-compact boundary $\partial V = \{ ( \varrho, a_{\varrho}) \mid \varrho \in (0, R^2/4)\} \subset V$.

\medskip

Put $f_0 \colon V \to \bbR$, $f_0(\varrho, t) = \xi_{\rho}^{-1}(t- a_{\varrho})$. 
Note that for fixed $\varrho$, the function $f_0(\varrho, -) \colon (f_{\varrho}(\varrho^2), a_{\varrho}] \to \bbR$ is the inverse of the function $[\varrho^4, \varrho^2) \to \bbR$, $r \mapsto f_\varrho(r)$.

\medskip

Let $\omega \colon V \to \bbR_{>0}$ be a smooth function. 
For each $\varrho \in (0, R^2 /4)$, we obtain the surface of revolution
\[
  S(\omega, \varrho) := \{ r = \omega(t) \mid t \in (f_{\varrho}(\varrho^2), a_{\varrho}] \} \subset \bbR^n \times (f_{\varrho}(\varrho^2), a_{\varrho}] . 
\]
Let $J^2V \to V$ be the bundle of $2$-jets of strictly positive real valued smooth functions on $V$.
We consider the open partial differential relation $\mathfrak{R}  \subset J^2 V$ determined by the following pointwise conditions for $\omega$, working with the numbers  $|\partial_{\varrho} d_\varrho| < \varrho^{1/4}$, from \cref{Lemma length estimate for handles}:
\begin{enumerate}
    \item $H(S(\omega, \varrho))>\frac12|A(S(\omega, \varrho))|$ if $k\ge2$, 
    \item $\scal(S(\omega, \varrho))>\frac13|A(S(\omega, \varrho))|^{2}$ if $k\ge 3$,
    \item $\min\left\{H(S(\omega, \varrho))^2, \scal(S(\omega, \varrho))\right\}>\tfrac{1}{2} \varrho^{-3}$
    \item \label{der_length} $\left| \partial_\varrho\left( \sqrt{ 1 + \omega'(\varrho, -)} - \sqrt{ 1 + f'_0(\varrho,-)} \right)\right|< \varrho^{1/4} - | \partial_\varrho d_\varrho |$. 
\end{enumerate}
By  \cref{Prop: Euclidean Handle}, $f_0$ solves $\mathfrak{R}$ over $V$.
Consider the smooth path $F \colon [0,1] \to C^{\infty}(V, \bbR_{>0})$, 
\[
  F(\tau)(\varrho, t) := (1-\tau) f_0(\varrho, t) + \tau \varrho^{4}.
\]
We have $j^1F(\tau)|_{\partial V} = j^1 f_0|_{\partial V}$ for all $t$.
Carrying out the computation in the proof of part \ref{deux} and \ref{trois} of \cref{Prop: Euclidean Handle} with $\lambda_2 = \ldots = \lambda_k = \varrho^4$ and $\lambda_1 \in [-\tfrac{1}{4} \varrho^4, 0]$, one sees that $F$ solves $\mathfrak{R}$ along $\partial V$ for all $\tau$ (for $\omega = F(\tau)$, the left-hand side of \ref{der_length} is equal to $0$ along $\partial V$).
By continuity, there exists an open neighborhood $\partial V \subset U \subset V$ such that $F$
solves $\mathfrak{R}$ over $U$ for all $t$.

\medskip

By the local flexibility lemma  \cite[Theorem 1.2]{BaerHanke1} applied to the closed subset $\partial V \subset V$, there exists an open neighborhood $\partial V \subset U_0 \subset U$ satisfying $\overline U_0 \subset V$ and a smooth path $f \colon [0,1] \to C^{\infty}(V , \bbR_{>0})$ such that $f(0) = f_0$, $f(\tau) = F(\tau)$ on $U_0$, $f(\tau) = f_0$ on $V \setminus U$ and $f(\tau)$ solves $\mathfrak{R}$ over $V$ for all $\tau$.

\medskip

For $\varrho \in (0,R^2/4)$, define
\begin{itemize}
 \item the smooth function $\psi_{\varrho} \colon (f_{\varrho}(\varrho^2) , a_{\varrho}] \to \bbR_{>0}$, $\psi_{\varrho}(t) = f(1)(\varrho, t)$.
 It is equal to $\varrho^4$ near $a_{\varrho}$ and equal to $f_0(\varrho,-)$ near $f_{\varrho}(\varrho^2)$;
 \item the surface of revolution
\[
  S(\varrho) := \{ r = \psi_{\varrho}(t) \mid t \in (f_{\varrho}(\varrho^2) , a_{\varrho}] \} \subset B_1^k(0) \times (f_{\varrho}(\varrho^2), a_{\varrho}];
\]
 \item the smooth hypersurface
\[
   \widetilde \Sigma_{\varrho} := \{ (r, t) \in \overline \Sigma_{\varrho} \mid t \notin (f_{\varrho}(\varrho^2), a_{\varrho}] \} \; \bigcup \; S(\varrho) \subset B_1^k(0) \times [0,1].
\]
\end{itemize}
The hypersurface $\widetilde \Sigma_{\rho}$ satisfies the properties in \cref{Prop: Euclidean Handle} by the choice of $\mathfrak{R}$. 
Note that parts \ref{final1} and \ref{final2} of \cref{Prop: Euclidean Handle} are satisfied since the relevant parts of $\overline \Sigma_\varrho$ are not affected by the smoothing.

\medskip

Let $\widetilde d_{\rho}$ be defined for $\widetilde \Sigma_{\varrho}$ as before \cref{Lemma length estimate for handles}.
Using that $\overline{\Sigma}_{\varrho} = \widetilde{\Sigma}_{\varrho}$ on $\{ t \notin (f_{\varrho}(\varrho^2), a_{\varrho}]\}$, and that $\psi'_{\varrho}(t)  = f_0'(\varrho, -)(t)$ for $t = f_{\varrho}(\varrho^2)$ and for $t = a_{\varrho}$ (where both sides of the equation  are $0$), we obtain
\begin{align*}
    | \partial_{\varrho}  \widetilde d_{\varrho}|  - | \partial_{\varrho}d_{\varrho} | & \leq \Big| \partial_{\varrho} ( \widetilde d_{\varrho} - d_{\varrho})\Big| \\
    & = \Big|  \partial_{\varrho} \int_{f_{\varrho}(\varrho^2)}^{a_{\varrho}} \left( \sqrt{ 1 + (\psi'_{\varrho})^2} -  \sqrt{ 1 + f_0'(\varrho, -)^2}\right) dt \Big| \\
    & = \Big| \int_{f_{\varrho}(\varrho^2)}^{a_{\varrho}}
    \partial_{\varrho}  \left( \sqrt{ 1 + (\psi'_{\varrho})^2} - \sqrt{ 1 + f_0'(\varrho, -)^2}\right) dt \Big| \\
    & \le \big| a_{\varrho} - f_{\varrho}(\varrho^2)\big| \max_{(f_{\varrho}(\varrho^2), a_{\varrho}]} \Big| \partial_{\varrho} \left( \sqrt{ 1 + (\psi'_{\varrho})^2} - \sqrt{ 1 + f_0'(\varrho, -)^2}\right) \Big| \\
    & < 1 \cdot \left(  \varrho^{1/4} - | \partial_{\varrho}d_{\varrho}|\right) .
\end{align*} 
Adding $|\partial_{\varrho} d_{\varrho}|$ to both sides of this inequality proves the first inequality from \cref{Lemma length estimate for handles}. 
The inequality $1 < \widetilde d_{\varrho}$ is clear, while the inequality $\widetilde d_{\varrho} < 1 + R$ is ensured if $\|f(1) - f_0\|_{C^{1}(V)}$ is sufficiently small, a condition that can be included in the relation $\mathfrak{R}$. 

\medskip

Finally, we replace $\overline \Sigma_{\varrho}$ with the smooth bending handle $\widetilde \Sigma_{\varrho}$. 
By construction,  \cref{Prop: Euclidean Handle} and \cref{Lemma length estimate for handles}, hold for the new $\overline \Sigma_{\varrho}$, which we will use from now on.

\section{Surgery bending in arbitrary backgrounds}\label{sec:handles}
In this section, we will construct the general surgery bending.
To this end, let $\mathcal{S}$  be a compact submanifold of codimension $k$ in an $n$-dimensional Riemannian manifold $(M, g_M)$.
We denote its normal bundle by $\calN_{\mathcal{S}}$.
Recall the constant $R\in(0,\tfrac{2}{\sqrt{10}}]$ from the previous section, which we may choose smaller if necessary, so that the normal exponential map of $\calN_\mathcal{S}$ along $\mathcal{S}$ is a diffeomorphism on the sub-bundle of vectors with length less than or equal to $R$ to the tubular neighborhood $B_{R}(\mathcal{S})$ of radius $R$ around $\mathcal{S}$.
With this identification, we consider $B_R(\mathcal{S})$ as a fibre bundle over $\mathcal{S}$ with fibre $B_r^k(0)$.

\medskip

We will construct a smooth submanifold $\Sigma_\varrho\subset M\times[0,1]$ describing a surgery bending in $\calN_\mathcal{S}$ which for $M = \bbR^k$ with the Euclidean metric and $\mathcal{S}$ a point recovers \cref{Construction: Euclidean handle}.
First, we choose Fermi coordinates around $\mathcal{S}$, that is, we have an identification $U\times B^k_R(0)\cong B_{R}(\mathcal{S})|_{U}$, where $U\subseteq \mathcal{S}$ is a trivialising open subset for $\calN_\mathcal{S}$.
We define for $\varrho\in(0,R^2/4)$
\[\Sigma_\varrho|_{U}\coloneqq U\times \overline\Sigma_\varrho\]
for the smooth submanifold $\overline\Sigma_\varrho\subset B^k_R(0)\times[0,1]$ as in \cref{Construction: Euclidean handle}.
Since $\mathcal{N}_{\mathcal{S}}$ has structure group $\mathrm{O}(k)$, using the $\mathrm{O}(k)$-invariance of $\overline\Sigma_\varrho$ (cf. \cref{Prop: Euclidean Handle}, (i)), the local definitions can be glued together into a global definition of $\Sigma_\varrho\subset B_R(\calS) \times[0,1]$.
Moreover, since $2\sqrt{\varrho}<R$, the hypersurface $\Sigma_\varrho$ can be extended by $(M\setminus B_R(\mathcal{S}))\times\{0\}$ to a smooth hypersurface  $\Sigma_\varrho \subset M\times[0,1]$.

\medskip

The main result of this section is the following theorem, which states the existence of a background metric on $M\times [0,1]$ that extends $g_M$ on $M\times\{0\}$ such that for $\varrho$ small enough, the scalar curvature of $\Sigma_\varrho$ can be uniformly bounded in terms of the scalar curvature of the original metric $g_M$ and, for $\varrho$ even smaller the mean curvature of $\Sigma_\varrho$ is positive.

\medskip

\begin{theorem}\label{theorem:handle-properties}
    Let $k\ge 3$ and $\alpha\in(0,\tfrac13]$. Then, there exists a constant $0<\varrho_1<\min(R^2/4, \tfrac{\alpha}{3})$ such that for all $\kappa, \varrho\in(0,\varrho_1]$ and for the metric
    \[g_{\kappa}=u^{\frac{4}{\,n-2\,}}(g_M+dt^2),\qquad u(x,t) = u_{\kappa}(x,t) = 1 + t \tilde\kappa \eta\left(\alpha^{-1}t-1\right)\]
    on $M\times[0,1]$, where $\tilde\kappa\coloneqq\tfrac{n-2}{2n}\kappa$, the scalar curvature of the induced metric $g_{\kappa,\varrho}$ on $\Sigma_\varrho$ satisfies
    \[\scal(\Sigma_\varrho, g_{\varrho, \kappa})> \scal(M, g_M)\circ\pr_M-\varrho_1^{1/4},\]
    where $\pr_M\colon M\times[0,1]\to M$ is the projection.
    Furthermore, there exists a constant $\varrho_0\in(0,\varrho_1)$ such that the mean curvature of $\Sigma_\varrho$ with respect to $g_{\kappa}$ satisfies
    \[H(\Sigma_\varrho,g_\kappa) > (1-\alpha)\kappa - \varrho^{1/4}\]
    for all $0<\varrho\le \varrho_0$.
\end{theorem}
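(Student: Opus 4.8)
The approach is to split $\Sigma_\varrho$ into its \emph{flat part}, outside the $2\sqrt\varrho$-neighbourhood of $\calS$, where $\Sigma_\varrho=M\times\{0\}$ by \cref{Prop: Euclidean Handle}, and its \emph{handle part} inside that neighbourhood, where a Fermi chart identifies $\Sigma_\varrho$ with the Euclidean model $\overline\Sigma_\varrho$ from \cref{Construction: Euclidean handle}. On the flat part everything is immediate: since $u(\cdot,0)=1$, the metric $g_\kappa$ induces $g_M$, so $\scal(\Sigma_\varrho,g_{\kappa,\varrho})=\scal(M,g_M)\circ\pr_M$, and the conformal transformation of the mean curvature together with $\partial_tu|_{t=0}=\tilde\kappa=\tfrac{n-2}{2n}\kappa$ gives $H(\Sigma_\varrho,g_\kappa)=\kappa>(1-\alpha)\kappa-\varrho^{1/4}$. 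So the work is on the handle part, where I would first pass to the background metric $g_M+dt^2$. Writing $g_\varrho$ for the metric it induces on $\Sigma_\varrho$, $v:=u|_{\Sigma_\varrho}$, and denoting by $H_\Sigma,A_\Sigma,\nu$ the mean curvature, second fundamental form and upward unit normal of $\Sigma_\varrho$ in $(M\times[0,1],g_M+dt^2)$, the conformal identities are
\[
 \scal(\Sigma_\varrho,g_{\kappa,\varrho})=v^{-\frac{n+2}{n-2}}\Big(v\,\scal(\Sigma_\varrho,g_\varrho)-\tfrac{4(n-1)}{n-2}\Delta_{g_\varrho}v\Big),\qquad
 H(\Sigma_\varrho,g_\kappa)=u^{-\frac{2}{n-2}}\Big(H_\Sigma+\tfrac{\kappa}{u}\langle\partial_t,\nu\rangle\Big),
\]
the latter on the bending region (and $H(\Sigma_\varrho,g_\kappa)=u^{-2/(n-2)}H_\Sigma$ on the cylinder $\{r=\varrho^4\}$, where $\nu$ is horizontal). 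Because $u$ depends on $t$ only and equals $1+t\tilde\kappa$ on $\{t\le\alpha\}\supseteq\{t\le 2\varrho\}$, one has $\Delta_{g_\varrho}v=-\tilde\kappa\,H_\Sigma\langle\partial_t,\nu\rangle$ wherever $u''=0$ (the only exception is the high portion $\{t\in[\alpha,2\alpha]\}$ of the cylinder, where $|\Delta_{g_\varrho}v|\le C_n\tilde\kappa/\alpha$ but $\scal(\Sigma_\varrho,g_\varrho)$ is of order $\varrho^{-8}$); and since $|v-1|\le\tilde\kappa\le\tfrac{n-2}{2n}\varrho_1$ the prefactors are $1+O_n(\varrho_1)$. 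Thus it suffices to control $\scal(\Sigma_\varrho,g_\varrho)$ and $H_\Sigma$.

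The crux is the comparison with $\overline\Sigma_\varrho$. In a Fermi chart $U\times B_R^k(0)\cong B_R(\calS)|_U$ one has $\Sigma_\varrho|_U=U\times\overline\Sigma_\varrho$, and $g_M+dt^2$ equals the product model $g_\calS\oplus\delta\oplus dt^2$ plus an error which is $O(r)$ in the $\calS$--$\calS$ block (from the second fundamental form of $\calS\subset M$) and $O(r^2)$ in the other blocks. Decomposing the Weingarten map of $\Sigma_\varrho$ along $T(\calS\text{-slice})\oplus T(\text{fibre-slice})$ and using $\nu=\overline\nu_\varrho+O(r^2)$, together with the \emph{exact} Gauss equation $\scal(\Sigma_\varrho,g_\varrho)=\scal(M,g_M)\circ\pr_M-2\Ric_{g_M}(\nu^M,\nu^M)+H_\Sigma^2-|A_\Sigma|^2$ (here $\nu^M$ is the $TM$-part of $\nu$, with $|\nu^M|=|\overline\nu_\varrho^{\intercal}|+O(r^2)$), I would show
\[
 H_\Sigma=H(\overline\Sigma_\varrho)+E,\qquad \|E\|\le C(M,g_M)\big(\sqrt\varrho+|\overline\nu_\varrho^{\intercal}|+r\,|A(\overline\Sigma_\varrho)|\big),
\]
and a matching componentwise bound for $A_\Sigma$ against $A(\overline\Sigma_\varrho)$. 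Feeding in \cref{Prop: Euclidean Handle} and the explicit identities $\lambda_2=\dots=\lambda_k=\varrho r^{-5/4}$, $\lambda_1=-\tfrac14\lambda_2$ on $\{\varrho^4\le r\le\sqrt\varrho\}$ from \cref{Construction: Euclidean handle} — which make $\scal(\overline\Sigma_\varrho)\asymp|A(\overline\Sigma_\varrho)|^2\asymp H(\overline\Sigma_\varrho)^2$ there and give $|\overline\nu_\varrho^{\intercal}|\,|A(\overline\Sigma_\varrho)|\lesssim r\,\scal(\overline\Sigma_\varrho)$ on the range where $\scal(\overline\Sigma_\varrho)>0$ — one obtains, for $\varrho\le\varrho_1$ small: on $\{\varrho^4\le r\le\varrho^2\}$ and the cylinder, $H_\Sigma>0$ and $H_\Sigma^2-|A_\Sigma|^2\ge\tfrac14\varrho^{-3}$; on $\{\varrho^2\le r\le\sqrt\varrho\}$, a completion of the square using $\scal(\overline\Sigma_\varrho)>\tfrac13|A(\overline\Sigma_\varrho)|^2$ gives $H_\Sigma^2-|A_\Sigma|^2\ge-C(M,g_M)\varrho$ while $|\nu^M|<C\sqrt\varrho$; on $\{\sqrt\varrho\le r\le 2\sqrt\varrho\}$, $H_\Sigma^2-|A_\Sigma|^2\ge-C(M,g_M)\varrho^{3/4}$; and everywhere $H_\Sigma\ge-C(M,g_M)\sqrt\varrho$ (from $H(\overline\Sigma_\varrho)>\tfrac12|A(\overline\Sigma_\varrho)|\ge0$) and $\langle\partial_t,\nu\rangle\ge 1-C(M,g_M)\varrho$ for $r\ge\varrho^2$. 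Plugging into the Gauss equation, $\scal(\Sigma_\varrho,g_\varrho)\ge\scal(M,g_M)\circ\pr_M-C(M,g_M)\varrho^{3/4}$ off the core/cylinder and $\ge\scal(M,g_M)\circ\pr_M-C(M,g_M)+\tfrac14\varrho^{-3}$ on it.

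It remains to assemble. For the scalar curvature, the key point is that $\Delta_{g_\varrho}v=-\tilde\kappa H_\Sigma\langle\partial_t,\nu\rangle$ has a \emph{favourable sign} wherever $H_\Sigma\ge0$, and where $H_\Sigma<0$ satisfies $|\Delta_{g_\varrho}v|\le C(M,g_M,n)\varrho_1\sqrt\varrho$ by $H_\Sigma\ge-C\sqrt\varrho$; a dichotomy according to whether $\scal(\Sigma_\varrho,g_\varrho)$ is huge (on the core/cylinder, where it dominates all corrections) or bounded (elsewhere, where $|\scal(\Sigma_\varrho,g_\varrho)-\scal(M,g_M)\circ\pr_M|\le C(M,g_M)\varrho^{3/4}$ and the $\Delta_{g_\varrho}v$-term cannot hurt much), together with $|v-1|\le\tfrac{n-2}{2n}\varrho_1$, then yields $\scal(\Sigma_\varrho,g_{\kappa,\varrho})>\scal(M,g_M)\circ\pr_M-\varrho_1^{1/4}$ as soon as $\varrho_1=\varrho_1(M,g_M,n,\alpha)$ is small (and $\varrho_1<\min(R^2/4,\alpha/3)$). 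For the mean curvature, on the core/cylinder $H_\Sigma$ is of order $\ge\varrho^{-3/2}\gg\varrho_1\ge\kappa$ and dominates; on the small-slope part and on $M\times\{0\}$, using $\langle\partial_t,\nu\rangle\ge 1-C(M,g_M)\varrho$ and $H_\Sigma\ge-C(M,g_M)\sqrt\varrho$,
\[
 H(\Sigma_\varrho,g_\kappa)=u^{-\frac{2}{n-2}}\Big(H_\Sigma+\tfrac{\kappa}{u}\langle\partial_t,\nu\rangle\Big)\ge\kappa\big(1-C(M,g_M,n)\varrho_1\big)-C(M,g_M)\sqrt\varrho>(1-\alpha)\kappa-\varrho^{1/4},
\]
provided $\varrho_1\le\alpha/\big(2C(M,g_M,n)\big)$ and $\varrho\le\varrho_0$ with $\varrho_0<C(M,g_M)^{-4}$.

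I expect the Fermi-coordinate estimate of the third paragraph to be the main obstacle. It is not enough to bound the discrepancy between $\Sigma_\varrho$ in $g_M+dt^2$ and in the flat product model by ``something small''; one needs it in the precise structured form above, in which every error term carries either a factor $|\overline\nu_\varrho^{\intercal}|$ (the tilt of $\Sigma_\varrho$ away from $M\times\{0\}$) or a factor $r$ (the distance to $\calS$ controlling the ambient deviation), so that on the graphical region it is exactly absorbed by the curvature surplus $\scal(\overline\Sigma_\varrho)\asymp|A(\overline\Sigma_\varrho)|^2$ produced by the ODE $-4\lambda_1=\lambda_2=\dots=\lambda_k$ defining $\xi_\varrho$ (this is precisely where the codimension $k\ge3$ is used), while near $\calS$ it is swamped by $\varrho^{-3}$. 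A secondary point needing care is that $\kappa$ and $\varrho$ vary independently in $(0,\varrho_1]$, so ratios such as $\tilde\kappa|H_\Sigma|/\scal(\Sigma_\varrho,g_\varrho)$ must be estimated using both $\varrho\le\varrho_1$ and $\tilde\kappa\le\tfrac{n-2}{2n}\varrho_1$; once the structured error bound is in hand, the conformal rescaling and region-by-region bookkeeping are routine.
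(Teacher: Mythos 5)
Your proposal is correct and follows essentially the same route as the paper: comparison of $\Sigma_\varrho$ with the Euclidean model $\overline\Sigma_\varrho$ in Fermi coordinates, with every error term carrying a factor of $\|\overline\nu_\varrho^\intercal\|$ or of the distance to $\calS$ (this structured comparison is precisely \cref{lem:curvature-estimates-black-box}), absorption of these errors region by region via the pinching and dichotomy estimates of \cref{Prop: Euclidean Handle}, and the conformal bookkeeping of \cref{lem:conformal-change}. The only slip is minor: on the outer band $r\in[\sqrt{\varrho},2\sqrt{\varrho}]$ the pinching $H(\overline\Sigma_\varrho)>\tfrac12|A(\overline\Sigma_\varrho)|$ is not available (\cref{Prop: Euclidean Handle} states it only for $r\le\sqrt{\varrho}$), so the uniform lower bound there is $H_\Sigma\gtrsim-\varrho^{3/8}$ rather than $-\sqrt{\varrho}$, which still suffices in your final bookkeeping since $\varrho^{3/8}\le\varrho^{1/4}$ for small $\varrho$.
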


\begin{remark}\leavevmode
    \begin{enumerate}
        \item We emphasize that $\varrho_1$ is independent of $\kappa$, while $\varrho_0$ will depend on $\kappa$.
        \item The mean curvature of the slice $M\times\{0\}\subset M\times[0,1]$ with respect to the ambient metric $g_{\kappa}$ and the exterior normal is given by 
        \[H(M\times\{0\},g_{\kappa}) = - \frac{2n}{n-2} \tilde\kappa = -\kappa.
        \]
        \item 
        If $k=2$ and $n\ge3$, then the second statement of \cref{theorem:handle-properties} still holds true, that is, the mean curvature of $\Sigma_\varrho$ with respect to $g_{\kappa}$ is still positive. However, the scalar curvature estimate might be incorrect.
    \end{enumerate}
\end{remark}

\begin{remark}\label{remark error terms}
Essentially, the content of \cref{theorem:handle-properties} is that the handle construction of \cref{Prop: Euclidean Handle} extends to arbitrary Riemannian manifolds and to surgeries of any codimension $k\ge3$.
This leads to a number of error terms.
They can be grouped into three different categories:
\begin{enumerate}
    \item Error terms which approach zero as $\varrho \to 0$. These are not problematic.
    \item Error terms of the form $\eps|\overline A_\varrho|$, $\eps\ll 1$, where we note that the second fundamental form $|A_\varrho|$ blows up as $\varrho\to0$. Using the pinching estimates $H(\overline\Sigma_\varrho)>\tfrac12|A(\overline\Sigma_\varrho)|$ and $\scal(\overline\Sigma_\varrho)>\tfrac13|A(\overline\Sigma_\varrho)|^{2}$ we can ensure that they can be absorbed.    
    \item Error terms which involve $|\overline \nu^\intercal_\varrho|$. They can be controlled by the dichotomy $\min\{H(\overline\Sigma_\varrho)^2, \scal(\overline\Sigma_\varrho)\}>\tfrac{1}{2} \varrho^{-3}$ for $r\in[\varrho^4,\varrho^2]$ and $|\overline\nu_\varrho^{\intercal}| < \overline C\,\sqrt \varrho$ for $r\ge \varrho^{2}$.
\end{enumerate}
\end{remark}

\medskip

Before diving into the proof of \cref{theorem:handle-properties}, let us introduce its main driving force, which are estimates for scalar and mean curvature, as well as the appropriate conformal transformation formulae. 

\medskip

For the following lemma, we use the notation $A\lesssim B$ to mean that there exists a positive constant $C$ independent of $\varrho$ such that $A\le C\cdot B$.

\begin{lemma}\label{lem:curvature-estimates-black-box}
    There exists a constant $\eps>0$ such that for every $0 < \varrho\le \eps$ and for the metric $g_\varrho$ on $\Sigma_\varrho$ induced by $g+\dt^2$, the following inequality holds:
    \begin{align}\label{eq:scalar-curvature-black-box}
        \begin{split}
            &\scal(\Sigma_\varrho,g_\varrho)- \scal(M, g_M)\circ\pr_M - \scal(\overline\Sigma_\varrho,g_{\varrho,\delta})\\
            &\qquad\qquad \gtrsim-\varrho\cdot\bigl(1+|A(\overline\Sigma_\varrho,\delta+\dt^2)|\bigr)^2 - \|\overline\nu_\varrho^\intercal\|\cdot(1+|A(\overline\Sigma_\varrho,\delta+\dt^2)|)
        \end{split}
    \end{align}
    Here, $\|\overline\nu_\varrho^\intercal\|$ denotes the norm of the tangential part of the normal vector of the hypersurface $\overline\Sigma_\varrho\subset\bbR^{k+1}$ constructed in \cref{Construction: Euclidean handle}  with respect to the Euclidean metric (see also \eqref{eq:definition-of-normal-in-Euclidean}), and $\scal(\overline\Sigma_\varrho,g_{\varrho,\delta})$ denotes the scalar curvature of this hypersurface with respect to the metric $g_{\varrho,\delta}$ induced by the Euclidean ambient metric $\delta+\dt^2$, composed with the projection onto the fiber of the tubular neighbourhood $D(\calS)$.
    Furthermore, we have the following inequality concerning the mean curvature 
    \begin{align}\label{eq:mean-curvature-black-box}
        |H(\Sigma_\varrho,g+\dt^2) - H(\overline\Sigma_\varrho,\delta+\dt^2)| \lesssim  \|\overline\nu_\varrho^\intercal\| + \varrho\cdot (1+ |A(\overline\Sigma_\varrho,\delta+\dt^2)|),
    \end{align}
    and we can compare the $\partial_t$-part of the normal vector $\nu_\varrho$ of $\Sigma_\varrho$ with respect to $g+\dt^2$ and the normal vector $\overline\nu_\varrho$ of $\overline\Sigma_\varrho\subset\bbR^{k+1}$ taken with respect to the Euclidean metric $\delta+\dt^2$ as follows:
    \begin{align}\label{eq:t-part-of-normal-vector}
        |\scpr{\nu_\varrho,\partial_t}_{g+\dt^2} - \scpr{\overline\nu_\varrho,\partial_t}_{\delta+\dt^2}|\lesssim \varrho.
    \end{align}
\end{lemma}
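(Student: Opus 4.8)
The plan is to work in Fermi coordinates along $\mathcal{S}$ and to compare, fibre by fibre, the intrinsic geometry of $\Sigma_\varrho \subset (M \times [0,1], g_M + \dt^2)$ with that of the Euclidean model $\overline\Sigma_\varrho \subset (\bbR^{k+1}, \delta + \dt^2)$. Fix a trivialising chart $U \times B_R^k(0) \cong B_R(\mathcal{S})|_U$ with radial coordinate $r = \dist(\,\cdot\,, \mathcal{S})$, spherical coordinates $\omega^a$ on $S^{k-1}$, and coordinates $y^i$ on $U \subseteq \mathcal{S}$. By the generalised Gauss lemma for Fermi coordinates, $g_M(\partial_r, \partial_r) \equiv 1$ and $\partial_r$ is $g_M$-orthogonal to $\partial_{y^i}$ and $\partial_{\omega^a}$, whereas the remaining blocks satisfy, uniformly over the compact $\mathcal{S}$,
\[
 g_{\omega^a\omega^b} = r^2 \gamma_{ab} + O(r^4), \qquad g_{\omega^a y^i} = O(r^2), \qquad g_{ij} = (g_{\mathcal{S}})_{ij} + O(r),
\]
with the attendant derivative bounds $\tfrac12 \partial_r g_{\omega^a\omega^b} = r \gamma_{ab} + O(r^3)$, $\tfrac12 \partial_r g_{\omega^a y^i} = O(r)$, $\tfrac12 \partial_r g_{ij} = O(1)$. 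Moreover $\partial_t$ is parallel, of unit norm, and $g_M$-orthogonal to $TM$ in $g_M + \dt^2$, and on the support of the surgery bending one has $r \le 2\sqrt\varrho$; thus the three error types of \cref{remark error terms} arise precisely as these blocks are switched on.

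\textbf{Normal vector and mean curvature.} On the graphical part $\overline\Sigma_\varrho = \{t = f_\varrho(r)\}$ the tangent space of $\Sigma_\varrho = U \times \overline\Sigma_\varrho$ is spanned by $\partial_{y^i}$, $\partial_{\omega^a}$ and $T := \partial_r + f_\varrho'(r)\partial_t$, and the candidate normal $c(-f_\varrho' \partial_r + \partial_t)$, with $c := (1 + (f_\varrho')^2)^{-1/2}$, lies in the plane spanned by $\partial_r$ and $\partial_t$, on which $g_M + \dt^2$ agrees with $\delta + \dt^2$; since in addition $\partial_r \perp_{g_M} \partial_{y^i}, \partial_{\omega^a}$, this vector is also the $(g_M + \dt^2)$-unit normal $\nu_\varrho$ of $\Sigma_\varrho$. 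Hence $\nu_\varrho = \overline\nu_\varrho$ as coordinate vector fields; its $TM$-part is $\nu_\varrho^{TM} = -c f_\varrho' \partial_r$ with $|\nu_\varrho^{TM}|_{g_M} = \|\overline\nu_\varrho^\intercal\|$, and $\langle \nu_\varrho, \partial_t\rangle_{g_M + \dt^2} = c = \langle \overline\nu_\varrho, \partial_t\rangle_{\delta + \dt^2}$, which is \eqref{eq:t-part-of-normal-vector} with vanishing error; the same holds on the cylindrical and (by the smoothing in \cref{sec:euclidean-handles}) the smoothed pieces, with $\partial_r$ resp. the meridian normal. For the second fundamental form, in the frame $\{\partial_{y^i}\} \cup \{\partial_{\omega^a}\} \cup \{T\}$ one has $A(\Sigma_\varrho)(X,Y) = \langle \nabla^{g_M + \dt^2}_X \nu_\varrho, Y\rangle$. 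Using $\nabla_{\partial_r}\partial_r = 0$, $\nabla_{\partial_t}(\,\cdot\,) = 0$ on $\partial_r, \partial_t$, and $g_{r\omega^a} = g_{r y^i} = 0$, the $T$-block of $A(\Sigma_\varrho)$ is exactly the Euclidean value ($A(T,T) = -c f_\varrho''$, $A(T, \partial_{\omega^a}) = A(T, \partial_{y^i}) = 0$, and $T$ is $g_\varrho$-orthogonal to the other directions), while every remaining entry equals $-c f_\varrho'$ times one of the radial metric derivatives above, that is, $\mp \|\overline\nu_\varrho^\intercal\|$ times a quantity of order $r \gamma_{ab} + O(r^3)$, $O(r)$, or $O(1)$. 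Tracing against $g_\varrho^{\omega^a\omega^b} = r^{-2}\gamma^{ab} + O(1)$ and $g_\varrho^{y^i y^j} = (g_{\mathcal{S}})^{ij} + O(r)$ gives
\[
 H(\Sigma_\varrho, g_M + \dt^2) = \lambda_1 + (k-1)\lambda_2 + O(\|\overline\nu_\varrho^\intercal\|) = H(\overline\Sigma_\varrho, \delta + \dt^2) + O(\|\overline\nu_\varrho^\intercal\|),
\]
which yields \eqref{eq:mean-curvature-black-box} (the summand $\varrho(1 + |A(\overline\Sigma_\varrho)|)$ is not needed here but is a fortiori valid, and is the shape in which the cylindrical and smoothed pieces, where $\|\overline\nu_\varrho^\intercal\| \approx 1$, are covered). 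The same expansion, keeping track that the intrinsic discrepancy $g_\varrho - g_{\varrho,\delta}$ only enters through factors $\|\overline\nu_\varrho^\intercal\|^2$ or smaller, yields $\bigl| |A(\Sigma_\varrho)|^2_{g_\varrho} - |A(\overline\Sigma_\varrho)|^2_{g_{\varrho,\delta}} \bigr| \lesssim \|\overline\nu_\varrho^\intercal\|^2 \lesssim \|\overline\nu_\varrho^\intercal\|(1 + |A(\overline\Sigma_\varrho)|)$, uniformly in $\varrho$.

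\textbf{Scalar curvature.} Apply the Gauss equation in each ambient space. Since $\dt^2$ and the flat $\bbR^{k+1}$-metric carry no curvature,
\[
 \scal(\Sigma_\varrho, g_\varrho) = \scal(M, g_M)\circ\pr_M - 2\Ric_{g_M}(\nu_\varrho^{TM}, \nu_\varrho^{TM}) + H(\Sigma_\varrho)^2 - |A(\Sigma_\varrho)|^2
\]
and $\scal(\overline\Sigma_\varrho, g_{\varrho,\delta}) = H(\overline\Sigma_\varrho)^2 - |A(\overline\Sigma_\varrho)|^2$. Subtracting, and using $|\nu_\varrho^{TM}|_{g_M} = \|\overline\nu_\varrho^\intercal\| \le 1$ together with the boundedness of $\Ric_{g_M}$ on $\overline{B_R(\mathcal{S})}$, the Cauchy--Schwarz bound $|H(\overline\Sigma_\varrho)| \le \sqrt{k}\,|A(\overline\Sigma_\varrho)|$ (so that $|H(\Sigma_\varrho) + H(\overline\Sigma_\varrho)| \lesssim 1 + |A(\overline\Sigma_\varrho)|$), and the two estimates from the previous paragraph, each remaining term is $\gtrsim -\|\overline\nu_\varrho^\intercal\|(1 + |A(\overline\Sigma_\varrho)|) - \varrho(1 + |A(\overline\Sigma_\varrho)|)^2$, which is \eqref{eq:scalar-curvature-black-box}.

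\textbf{Main obstacle.} The real work is the bookkeeping in the second step: one must verify that every error term in $A(\Sigma_\varrho)$, and hence in $|A(\Sigma_\varrho)|^2 - |A(\overline\Sigma_\varrho)|^2$, carries a factor $\|\overline\nu_\varrho^\intercal\|$ (originating from the radial component of $\nu_\varrho$) times a quantity that is controlled by $1 + |A(\overline\Sigma_\varrho)|$ with a further factor $O(1)$, $O(\varrho)$ or $O(\|\overline\nu_\varrho^\intercal\|)$ to spare, and that this holds uniformly across the graphical part (where, by \cref{Prop: Euclidean Handle}, $|A(\overline\Sigma_\varrho)|$ may be of order $\varrho^{-3/2}$ and $\|\overline\nu_\varrho^\intercal\|$ of order $1$), the cylindrical part and the smoothing region; one must simultaneously prevent the intrinsic-metric perturbation $g_\varrho - g_{\varrho,\delta}$ from contaminating the traces. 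A minor preliminary is to justify the precise Taylor orders of the Fermi-coordinate metric blocks and to observe that the smoothing of \cref{sec:euclidean-handles} preserves all structural facts used, the smoothed handle being again a rotationally symmetric graph with a cylindrical end.
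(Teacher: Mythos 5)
Your proposal is correct, and it reaches the stated estimates by a route that is related to, but genuinely leaner than, the paper's. The paper compares $g+\dt^2$ first with the auxiliary metric $\gamma+\dt^2$, where $\gamma$ is the first-order Taylor expansion \eqref{eq: definition gamma} of $g$ along $\calS$ (producing the $\varrho\cdot(1+|A|)^2$-type errors from $|g-\gamma|\lesssim r^2$), and then with $\delta+\dt^2$ via the fibre projection $\pi$, using $D\pi(\nu_\gamma)=\nu_\delta$ and the boundedness of the fibre second fundamental form to extract the factor $\|\nu^\intercal\|$ from all mixed and $\calS$-directional entries of $A$. You instead work directly in Fermi coordinates and exploit the exact identities $g(\partial_r,\partial_r)=1$, $g(\partial_r,\partial_{y^i})=g(\partial_r,\partial_{\omega^a})=0$ (generalised Gauss lemma) and $\nabla_{\partial_r}\partial_r=0$: these do hold, they make the unit normal of $\Sigma_\varrho$ literally equal to $c(-f_\varrho'\partial_r+\partial_t)$ and the meridian block of $A$ literally equal to its Euclidean value, and they force every remaining entry of $A$ to carry the factor $c|f_\varrho'|=\|\overline\nu_\varrho^\intercal\|$ times a tube-geometry quantity whose trace/norm differs from the Euclidean $(k-1)/r$, $(k-1)/r^2$ only by $O(1)$ uniformly in $\varrho$ (your observation that $r^2\lambda_2^2=\|\overline\nu_\varrho^\intercal\|^2$ is exactly what makes the $\omega$-block corrections land in $\|\overline\nu_\varrho^\intercal\|^2$ rather than $\varrho|A|^2$). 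As a result you obtain \eqref{eq:t-part-of-normal-vector} with zero error and \eqref{eq:mean-curvature-black-box}, \eqref{eq:scalar-curvature-black-box} without needing the $\varrho\cdot(1+|A|)^2$ term at all, which is stronger than the lemma and perfectly compatible with its use in \cref{theorem:handle-properties}. What the paper's two-step device buys in exchange is robustness: it isolates precisely the $1$-jet information of $g$ along $\calS$ that matters, avoids relying on exact Fermi-coordinate normalisations, and transfers verbatim to variants such as the initial-data-set version mentioned in the remarks; your route is sharper but more dependent on the detailed expansions, so in a write-up you should state explicitly the uniform (over compact $\calS$) Taylor orders of the metric blocks and verify, as you indicate, that the smoothing region of \cref{sec:euclidean-handles} is again a rotationally symmetric meridian graph so that the same block structure applies there.
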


\begin{lemma}\label{lem:conformal-change}
    Let $u$, $g_{\kappa}$ and $g_{\varrho,\kappa}$ be as defined in \cref{theorem:handle-properties}. Then we have the following equalities comparing the scalar and mean curvature before and after conformal transformation by $u^{4/(n-2)}$:
    \begin{align}
        \begin{split}\label{eq:conformal-change-for-scalar-curvature}
            \scal(\Sigma_\varrho,g_{\varrho,\kappa}) ={}& u^{-\frac{n+2}{n-2}}\Bigl(
                \scal(\Sigma_\varrho, g_\varrho) - \frac{4(n-1)}{n-2}(1-\scpr{\nu_\varrho,\partial_t}^2)\partial^2_t u\\ 
                &\qquad\qquad+ \frac{4(n-1)}{n-2}H(\Sigma_\varrho,g+\dt^2)\scpr{\nu_\varrho,\partial_t} \partial_tu
            \Bigr)\\
        \end{split}\\
        \begin{split}\label{eq:conformal-change-for-mean-curvature}
            H(\Sigma_\varrho,g_{\kappa}) = u^{-\frac{2}{n-2}} \left(H(\Sigma_\varrho,g+\dt^2) + \frac{2n}{n-2}\scpr{\nu_\varrho,\partial_t}u^{-1}\partial_tu\right)
        \end{split}
    \end{align}
    where the normal vector $\nu_\varrho$ of $\Sigma_\varrho$ and the inner product with $\partial_t$ are taken with respect to the metric $g+\dt^2$.
\end{lemma}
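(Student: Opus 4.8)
The plan is to derive both identities from the classical conformal transformation laws for scalar and mean curvature, exploiting that $u$ depends only on the $[0,1]$-coordinate and that $\partial_t$ is parallel for the product metric $g+\dt^2$, so that every derivative of $u$ is governed by $\partial_t u$, $\partial_t^2 u$ and $\langle\nu_\varrho,\partial_t\rangle$.

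\emph{Scalar curvature.} First I would note that the metric induced on $\Sigma_\varrho$ by $g_\kappa = u^{4/(n-2)}(g+\dt^2)$ is exactly $g_{\varrho,\kappa} = (u|_{\Sigma_\varrho})^{4/(n-2)}\,g_\varrho$. Since $\dim\Sigma_\varrho = n$, the Yamabe-type conformal change formula on the $n$-manifold $(\Sigma_\varrho, g_\varrho)$ expresses $\scal(\Sigma_\varrho, g_{\varrho,\kappa})$ in terms of $\scal(\Sigma_\varrho, g_\varrho)$, $u|_{\Sigma_\varrho}$ and the intrinsic Laplacian $\Delta_{g_\varrho}(u|_{\Sigma_\varrho})$. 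I would then rewrite this intrinsic Laplacian in ambient terms via the standard hypersurface identity
\[
  \Delta_{\Sigma_\varrho}(f|_{\Sigma_\varrho}) = \Delta_{g+\dt^2}f - \Hess_{g+\dt^2}f(\nu_\varrho,\nu_\varrho) - H(\Sigma_\varrho, g+\dt^2)\,\nu_\varrho(f),
\]
valid for any smooth ambient function $f$ with the sign convention for $H$ used in the paper (which one checks against the unit sphere example). Applying this with $f = u$ and using $\nabla^{g+\dt^2}\partial_t = 0$ together with $u = u(t)$ gives $\Delta_{g+\dt^2}u = \partial_t^2 u$, $\Hess_{g+\dt^2}u(\nu_\varrho,\nu_\varrho) = (\partial_t^2 u)\langle\nu_\varrho,\partial_t\rangle^2$ and $\nu_\varrho(u) = (\partial_t u)\langle\nu_\varrho,\partial_t\rangle$. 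Substituting these into the conformal change formula and collecting terms yields \eqref{eq:conformal-change-for-scalar-curvature}.

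\emph{Mean curvature.} Writing $u^{4/(n-2)} = e^{2\phi}$ with $\phi = \tfrac{2}{n-2}\log u$, the classical conformal transformation of the mean curvature of a hypersurface $\Sigma^n$ inside the $(n+1)$-manifold $(M\times[0,1], g+\dt^2)$ reads
\[
  H(\Sigma_\varrho, e^{2\phi}(g+\dt^2)) = e^{-\phi}\bigl(H(\Sigma_\varrho, g+\dt^2) + n\,\nu_\varrho(\phi)\bigr),
\]
where $\nu_\varrho$ on the right-hand side is the $(g+\dt^2)$-unit normal. Since $e^{-\phi} = u^{-2/(n-2)}$ and $\nu_\varrho(\phi) = \tfrac{2}{n-2}u^{-1}\nu_\varrho(u) = \tfrac{2}{n-2}u^{-1}\langle\nu_\varrho,\partial_t\rangle\,\partial_t u$, this is precisely \eqref{eq:conformal-change-for-mean-curvature}.

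\emph{Main difficulty.} There is no serious obstacle: both transformation laws are classical and the remainder is bookkeeping. The only points requiring care are matching the sign conventions for $H$ and for the second fundamental form used throughout the paper—so that the $H$-terms in the hypersurface Laplacian identity and in the mean-curvature law carry the signs appearing in \eqref{eq:conformal-change-for-scalar-curvature} and \eqref{eq:conformal-change-for-mean-curvature}—and keeping in mind that the relevant mean curvature is the trace over $n$ principal directions, which is the source of the factors $\tfrac{4(n-1)}{n-2}$ and $\tfrac{2n}{n-2}$.
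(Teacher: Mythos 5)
Your route is the expected one: the paper records this lemma as well-known and supplies no proof, and the standard argument you outline is exactly what is needed — the Yamabe-type conformal change formula on the $n$-dimensional hypersurface, the identity $\Delta_{\Sigma_\varrho}(f|_{\Sigma_\varrho}) = \Delta_{g+\dt^2}f - \Hess_{g+\dt^2}f(\nu_\varrho,\nu_\varrho) - H(\Sigma_\varrho,g+\dt^2)\,\nu_\varrho(f)$ (whose sign convention does match the paper's, as your sphere check confirms), the simplifications $\Delta_{g+\dt^2}u=\partial_t^2u$, $\Hess_{g+\dt^2}u(\nu_\varrho,\nu_\varrho)=(\partial_t^2u)\scpr{\nu_\varrho,\partial_t}^2$, $\nu_\varrho(u)=\scpr{\nu_\varrho,\partial_t}\partial_tu$ for $u=u(t)$ in the product metric, and the ambient law $\tilde H=e^{-\phi}\bigl(H+n\,\nu(\phi)\bigr)$ with $\phi=\tfrac{2}{n-2}\log u$. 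The mean-curvature identity \eqref{eq:conformal-change-for-mean-curvature} comes out verbatim from this.

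One point needs attention in the scalar-curvature part: carried out carefully, your substitution produces $u\cdot\scal(\Sigma_\varrho,g_\varrho)$, not $\scal(\Sigma_\varrho,g_\varrho)$, as the first term inside the bracket, since the classical formula in dimension $n$ reads $\scal_{\tilde g}=u^{-\frac{n+2}{n-2}}\bigl(u\,\scal_g-\tfrac{4(n-1)}{n-2}\Delta_g u\bigr)$; this is also the form used later in the proof of \cref{prop:final-path-of-metrics}, where the factor $u_s$ does appear. So \enquote{collecting terms} does not literally yield the display \eqref{eq:conformal-change-for-scalar-curvature}: the printed statement is evidently missing this factor of $u$, and your claim of exact agreement glosses over that. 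You should either state the identity with the factor $u$ and remark that the discrepancy is harmless in all subsequent estimates (since $1\le u\le 1+\tilde\kappa$ with $\tilde\kappa$ small, the error is of order $\tilde\kappa\,|\scal(\Sigma_\varrho,g_\varrho)|$ and is absorbed by the $\varrho_1^{1/4}$ slack), or explicitly flag the typo rather than asserting that your computation reproduces the formula as printed.
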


\cref{lem:conformal-change} is well-known, and we will postpone the proof of \cref{lem:curvature-estimates-black-box} for now and instead turn to the proof of \cref{theorem:handle-properties}. 

\begin{proof}[Proof of \cref{theorem:handle-properties}]    
    In order to estimate the scalar curvature and mean curvature of $g_{\varrho,\kappa}$, we first note that at points with radial coordinate $r\ge2\sqrt\varrho$ we have $\Sigma_\varrho=M$ and $g_\varrho=g$.
    So, the scalar curvature estimate is trivially true here, whereas the estimate on the mean curvature follows from \cref{lem:conformal-change} using $\partial_tu=\tilde\kappa$, $u=1=\scpr{\nu_\varrho,\partial_t}$ and $H(M\times\{0\}, g+\dt^2)=0$. 
    Hence, it suffices to consider $r\le 2\sqrt\varrho$.
    We will distinguish $3$ cases: $r\in[\varrho^4,\varrho^2]$, $r\in[\varrho^2,\sqrt{\varrho}]$ and $r\in[\sqrt\varrho,2\sqrt{\varrho}]$.
    In every case we will first estimate the scalar and mean curvature with respect to the ambient metric $g+\dt^2$ using \cref{lem:curvature-estimates-black-box} before investigating the effect of conformal change on scalar curvature employing \cref{lem:conformal-change}.
    At the very end of the proof, we will prove positivity of the mean curvature after conformal change after passing to a possibly smaller $\varrho_0$, depending on $\kappa$.
    
    Let us start by considering $r\in[\varrho^4,\varrho^2]$, where \cref{Prop: Euclidean Handle} yields
    \begin{align*}
        \scal(\overline\Sigma_\varrho,g_{\varrho,\delta})> \frac12\varrho^{-3}\quad\text{and}\quad \scal(\overline{\Sigma}_\varrho,g_{\varrho,\delta})>\frac13|A(\overline\Sigma_\varrho,\delta+\dt^2)|^2,
    \end{align*}
    where $g_{\varrho,\delta}$ is the metric on $\overline\Sigma_\varrho\subset\bbR^{k+1}$ induced by the ambient Euclidean metric $\delta$.
    This also yields $\scal(\overline\Sigma_\varrho)\ge \frac16\varrho^{-3/2}|A(\overline\Sigma_\varrho,\delta+\dt^2)|$.
    Hence, 
    \[\scal(\overline\Sigma_\varrho,g_{\varrho,\delta})> \frac1{20} \left(\varrho^{-3} + |A(\overline\Sigma_\varrho,\delta+\dt^2)|^2 + \varrho^{-3/2}|A(\overline\Sigma_\varrho,\delta+\dt^2)|\right).\]
    Furthermore, we obtain from \eqref{eq:scalar-curvature-black-box} from \cref{lem:curvature-estimates-black-box} that there exist constants $C_0>0$, independent of $\kappa$, such that:
    \begin{align}\label{eq:curvature-black-box-with-constant}
        \begin{split}
            &\scal(\Sigma_\varrho,g_\varrho) - \scal(M, g_M)\circ\pr_M\\
            &\quad >{} \scal(\overline\Sigma_\varrho,g_{\varrho,\delta}) - C_0\Bigl(\varrho\cdot(1+|A(\overline\Sigma_\varrho,\delta+\dt^2)|)^2 + \|\overline\nu_\varrho^\intercal\|(1+|A(\overline\Sigma_\varrho,\delta+\dt^2)|)\Bigr),
        \end{split}
    \end{align}
    where $\pr_M\colon \Sigma_\varrho\subset M\times[0,1]\to M$ denotes the projection.
    We will omit \enquote{$\circ\pr_M$} for the remainder of this proof to ease the notation.
    Since $\|\overline\nu_\varrho^\intercal\|\le1$, and we thus obtain
    \begin{align*}
        \begin{split}
            &\scal(\Sigma_\varrho,g_\varrho) - \scal(M, g_M) \\
                &\qquad> \frac1{20}\left(\varrho^{-3} + |A(\overline\Sigma_\varrho,\delta+\dt^2)|^2 + \varrho^{-3/2}|A(\overline\Sigma_\varrho,\delta+\dt^2)|\right)\\
                &\qquad\qquad - C_0\Bigl(\varrho\cdot(1+|A(\overline\Sigma_\varrho,\delta+\dt^2)|)^2 + (1+|A(\overline\Sigma_\varrho,\delta+\dt^2)|)\Bigr)\\
                &\qquad\ge \varrho^{-3}\left(\frac1{20} - C_0\varrho^3\right) 
                + |A(\overline\Sigma_\varrho,\delta+\dt^2)|\left(\frac1{20}\varrho^{-3/2}-C_0\right)\\
                &\qquad\qquad+ |A(\overline\Sigma_\varrho,\delta+\dt^2)|^2\left(\frac1{20}-C_0\varrho\right)
        \end{split}
    \end{align*}
    Hence, for $c_0\coloneqq \tfrac{1}{40}$ and $\varrho_1\le \tfrac{1}{40C_0}$, we have
    \begin{align}\label{eq:scalar-curvature-estimate-region1}
        \scal(\Sigma_\varrho,g_\varrho) >\scal(M, g_M) + c_0\cdot \varrho^{-3}.
    \end{align}
    For the mean curvature, we first recall that by \eqref{eq:mean-curvature-black-box} from \cref{lem:curvature-estimates-black-box} there exists a constant $C_1$ such that
    \begin{align}\label{eq:mean-curvature-black-box-with-constant}
        H(\Sigma_\varrho,g+\dt^2) - H(\overline\Sigma_\varrho,\delta+\dt^2)\ge -C_1\left(\|\overline{\nu}_\varrho^\intercal\| + \varrho(1+|A(\overline\Sigma,\delta+\dt^2)|)\right).
    \end{align}
    Furthermore, we have 
    \[H(\overline\Sigma_\varrho,\delta+\dt^2) > \frac12 |A(\overline\Sigma_\varrho,\delta+\dt^2)|\quad\text{and}\quad H(\overline\Sigma_\varrho,\delta+\dt^2) > \frac12\varrho^{-3/2},\]
    by \cref{Prop: Euclidean Handle}.
    Therefore, $H(\overline\Sigma_\varrho,\delta+\dt^2)> \tfrac{1}{4}(|A(\overline\Sigma_\varrho,\delta+\dt^2)| + \varrho^{-3/2})$ and \eqref{eq:mean-curvature-black-box-with-constant} becomes:
    \begin{align*}
        H(\Sigma_\varrho,g+\dt^2) >{}& \frac14\left(\varrho^{-3/2}+ |A(\overline\Sigma_\varrho,\delta+\dt^2)|\right)\\
            & - C_1\left(1 + \varrho\cdot (1+ |A(\overline\Sigma_\varrho,\delta+\dt^2)|)\right)\\
            ={}& \varrho^{-3/2}\left(\frac14-C_1(\varrho^{3/2} + \varrho^{5/2})\right) + |A(\overline\Sigma_\varrho,\delta+\dt^2)|\left(\frac14-C_1\varrho\right)
    \end{align*}
    Hence, for $c_1\coloneqq\tfrac{1}{8}$ and $\varrho_1\le \tfrac{1}{16C_1}$, we get
    \begin{align}\label{eq:mean-curvature-region-1}
        H(\Sigma_\varrho,g+\dt^2)> c_1\cdot \varrho^{-3/2}
    \end{align}

    To close the first case of the proof for the scalar curvature estimate, let us consider the conformal transformation formula \eqref{eq:conformal-change-for-scalar-curvature} from \cref{lem:conformal-change}:
    \begin{align*}
        \begin{split}
            \scal(\Sigma_\varrho,g_{\varrho,\kappa}) ={}& u^{-\frac{n+2}{n-2}}\Bigl(
                \scal(\Sigma_\varrho, g_\varrho) - \frac{4(n-1)}{n-2}(1-\scpr{\nu_\varrho,\partial_t}^2)\partial^2_t u\\ 
                &\qquad\qquad+ \frac{4(n-1)}{n-2}H(\Sigma_\varrho,g+\dt^2)\scpr{\nu_\varrho,\partial_t} \partial_tu
            \Bigr)
        \end{split}
    \end{align*}

    If $t\ge2\varrho$, then $\Sigma_\varrho$ is given by the straight cylinder of radius $\varrho^{4}$, so $\scpr{\nu_\varrho,\partial_t}=0$.
    Furthermore, $u\le 2$ and 
    \[\partial^2_t u= \alpha^{-1}\tilde\kappa\eta'(\alpha^{-1}t-1) + \alpha^{-2}t\tilde\kappa\eta''(\alpha^{-1}t-1)\lesssim \alpha^{-2} \varrho_1\lesssim \varrho_1^{-1},\]
    by the choice of $\eta$ and $\kappa$.
    Therefore, we may assume $\partial^2_t u\le C_1\varrho_1$ and the above formula gives the following estimate
    \begin{align*}
        \scal(\Sigma_\varrho,g_{\varrho,\kappa}) >{}& 2^{-\frac{n+2}{n-2}}\Bigl(
            \scal(M, g_M) + \varrho^{-3}(c_1 - \frac{4(n-1)}{n-2}C_0\varrho_1^{2}) 
        \Bigr).
    \end{align*}
    We choose $\varrho_1$ such that $c_1-\tfrac{4(n-1)}{n-2}C_0\varrho_1^2\le c_1/2$ and $c_1\varrho_1^{-3/2}\ge 66\max|\scal(M,g_M)|$, so that we get $\scal(\Sigma_\varrho,g_{\varrho,\kappa})> |\scal(M,g_M)|$ as desired.
    
    \medskip

    On the other hand, if $t\le2\varrho\le\alpha$, then $u=1+t\tilde\kappa$, $\partial_t u=\tilde\kappa$ and $\partial_t^2u = 0$. As $H(\Sigma_\varrho,g+\dt^2)$ is positive, so is the entire final summand, and we get
    \begin{align*}
        \scal(\Sigma_\varrho,g_{\varrho,\kappa}) >{}& 2^{-\frac{n+2}{n-2}}\underbrace{\Bigl(
            \scal(M, g_M) + c_1\varrho^{-3}
        \Bigr)}_{\ge 32\max|\scal(M, g_M)|} \ge |\scal(M, g_M)|,
    \end{align*}
    which finishes the first case.

    \medskip

    Next, we investigate the region $r\in[\varrho^2,\sqrt{\varrho}]$, where \cref{Prop: Euclidean Handle} gives the following estimates:
    \begin{align*}
        \scal(\overline\Sigma_\varrho)> \frac13|A(\overline\Sigma_\varrho,\delta+\dt^2)|^2\quad\text{and}\quad \|\nu_\varrho^\intercal\|< \overline C\sqrt{\varrho}
    \end{align*}
    for $\overline C = 18k+432$ the constant from \cref{Prop: Euclidean Handle}.
    We set $C_2\coloneqq 4C_0\cdot \overline C$, and observe that \eqref{eq:curvature-black-box-with-constant} yields
    \begin{align*}
        &\scal(\Sigma_\varrho,g_\varrho) - \scal(M, g_M)\\
        &\qquad> \frac13|A(\overline\Sigma_\varrho,\delta+\dt^2)|^2 \\
        &\qquad\qquad-\frac{C_2}{4}\left(\varrho\cdot(1+|A(\overline\Sigma_\varrho,\delta+\dt^2)|)^2 - \varrho^{1/2}\cdot(1+|A(\overline\Sigma_\varrho,\delta+\dt^2)|)\right)\\
        &\qquad\ge  -\frac{C_2}{4}\varrho^{1/2}-\frac{C_2}{2}\varrho^{1/2}|A(\overline\Sigma_\varrho,\delta+\dt^2)| + \left(\frac13-\frac{C_2}{4}\varrho\right)|A(\overline\Sigma_\varrho,\delta+\dt^2)|^2
    \end{align*}
    If $|A(\overline\Sigma_\varrho,\delta+\dt^2)|\le 1$, then this becomes $\scal(\Sigma_\varrho,g_\varrho) -\scal(M, g_M)>-C_2\varrho^{1/2}$.
    On the other hand, if $|A(\overline\Sigma_\varrho,\delta+\dt^2)|\ge 1$, then $|A(\overline\Sigma_\varrho,\delta+\dt^2)|\le |A(\overline\Sigma_\varrho,\delta+\dt^2)|^2$ and we obtain 
    \begin{align*}
        \begin{split}
            &\scal(\Sigma_\varrho,g_\varrho) - \scal(M, g_M) >  -\frac{C_2}{4}\varrho^{1/2} + \left(\frac13-\frac{C_2}{2}\varrho^{1/2}-\frac{C_2}{4}\varrho\right)|A(\overline\Sigma_\varrho,\delta+\dt^2)|^2
        \end{split}
    \end{align*}
    and thus, for $\varrho_1\le (\frac{1}{3C_2})^2$, we have 
    \begin{align}\label{eq:scalar-curvature-estimate-region2}
        \scal(\Sigma_\varrho,g_\varrho) > \scal(M, g_M) - C_2\varrho^{1/2}
    \end{align}
    Concerning the mean curvature, \cref{Prop: Euclidean Handle} again yields
    \[H(\overline\Sigma_\varrho,\delta+\dt^2)>\frac12|A(\overline\Sigma_\varrho,\delta+\dt^2)|,\]
    and we can hence estimate
    \begin{align*}
        \begin{split}
            H(\Sigma_\varrho,g+\dt^2) >{}& \frac12|A(\overline\Sigma_\varrho,\delta+\dt^2)| - C_1\cdot\overline C\left(\varrho^{1/2} + \varrho\cdot (1+ |A(\overline\Sigma_\varrho,\delta+\dt^2)|)\right)\\
                \ge{}& -C_3\varrho^{1/2} + |A(\overline\Sigma_\varrho,\delta+\dt^2)|\left(\frac{1}{2} - C_3\varrho\right)
        \end{split}
    \end{align*}
    where $C_3\coloneqq 2C_1\cdot\overline C$.
    If $\varrho_1\le \tfrac{1}{4C_3}$, we get
    \begin{align}\label{eq:mean-curvature-in-the-middle}
        H(\Sigma_\varrho,g+\dt^2)>-C_3\varrho^{1/2} 
    \end{align}
    By employing the conformal change formula from \cref{lem:conformal-change}, we can use $u=1+t\tilde\kappa$ to obtain
    \begin{align*}
        \scal(\Sigma_\varrho,g_{\varrho,\kappa}) ={}& u^{-\frac{n+2}{n-2}}\Bigl(\scal(\Sigma_\varrho, g_\varrho) + \frac{4(n-1)}{n-2}H(\Sigma_\varrho,g+\dt^2)\scpr{\nu_\varrho,\partial_t} \tilde\kappa\Bigr)\\
            >{}&u^{-\frac{n+2}{n-2}}\Bigl(\scal(M, g_M) - C_3\varrho^{1/2}\Bigr)\\
            \ge{}&u^{-\frac{n+2}{n-2}}\left(\scal(M, g_M) - \frac12\varrho^{1/4}\right),
    \end{align*}
    if $\varrho_1^{1/4}C_3\le \tfrac{1}{2}$.
    Since $u=1+t\tilde\kappa\le1+\kappa\le 1+\varrho_1$, we have $1\ge u^{-1}\ge u^{-(n+2)/(n-2)} \ge u^{{-5}}\ge 1-5t\tilde\kappa\ge 1-5\varrho_1$.\footnote{This follows from the inequality $(1+x)^{-n} \ge 1-nx$ for  $n\in\bbN$.}
    If $\scal(M, g_M) -\frac12\varrho^{1/4}\le 0$, then 
    \[\scal(\Sigma_\varrho,g_{\varrho,\kappa}) >\scal(M, g_M) -\frac12\varrho^{1/4} \ge\scal(M, g_M) -\frac12\varrho_1^{1/4},\]
    so it suffices to consider the case that $\scal(M, g_M) -\frac12\varrho^{1/4}\ge 0$, where we can argue similarly:
    \begin{align*}
        \scal(\Sigma_\varrho,g_{\varrho,\kappa})
            >{}&(1-5\varrho_1)\left(\scal(M, g_M) -\frac12\varrho^{1/4}\right)\\
            >{}&\scal(M, g_M) - \varrho_1^{1/4}\Bigl(
                5\cdot\scal(M, g_M)\varrho_1^{3/4} + \frac12
            \Bigr),
    \end{align*}
    and, if $\varrho_1$ is chosen such that $5\max\scal(M, g_M)\varrho_1^{3/4} < \frac12$, we obtain 
    \[\scal(\Sigma_\varrho,g_{\varrho,\kappa})>\scal(M, g_M)-\varrho_1^{1/4}.\]

    \medskip

    Finally, we consider $r\in[\sqrt{\varrho},2\sqrt\varrho]$.
    By \cref{Prop: Euclidean Handle}, we have
    \begin{align*}
        \|\nu_\varrho^\intercal\|\lesssim \varrho^{1/2},\qquad|A(\overline\Sigma_\varrho,\delta+\dt^2)|\lesssim\varrho^{3/8}\quad\text{and}\quad \scal(\overline\Sigma_\varrho,g_\varrho)\gtrsim-\varrho^{3/4}.
    \end{align*}
    By consulting \eqref{eq:scalar-curvature-black-box} from \cref{lem:curvature-estimates-black-box} one final time, we deduce that 
    \begin{align}\label{eq:scalar-curvature-estimate-region3}
        \begin{split}
            &\scal(\Sigma_\varrho,g_\varrho)-\scal(M, g_M)\\
            &\qquad\gtrsim -\varrho^{3/4} - \varrho\cdot(1+\varrho^{3/8})^2 - \varrho^{1/2}\cdot(1+\varrho^{3/8}) \gtrsim -\varrho^{1/2}
        \end{split}
    \end{align}
    Furthermore, using $H(\overline\Sigma_\varrho,\delta+\dt^2)\ge-n|A(\overline\Sigma_\varrho,\delta+\dt^2)|\gtrsim\varrho^{3/8}$ we obtain
    \begin{align}\label{eq:mean-curvature-in-the-end}
        \begin{split}
            H(\Sigma_\varrho,g+\dt^2) \gtrsim{}& H(\overline\Sigma_\varrho,\delta+\dt^2)- \|\nu_\varrho^\intercal\|-\varrho(1+ |A(\overline\Sigma_\varrho,\delta+\dt^2)|)\\
                \gtrsim{}& -\varrho^{3/8} -  \varrho^{1/2}-\varrho-\varrho^{11/8}\gtrsim -\varrho^{3/8}
        \end{split}
    \end{align}
    Thus there exists a constant $C_4>0$ such that 
    \[\scal(\Sigma_\varrho,g_\varrho)>\scal(M, g_M)-C_4\varrho^{1/2}\quad\text{and}\quad H(\Sigma_\varrho,g+\dt^2)>-C_4\varrho^{3/8}.\]
    An application of \cref{lem:conformal-change} together with the observation that $\partial_tu = \tilde\kappa\le 1$, $\partial_t^2 u=0$ and $\tfrac{4(n-1)}{(n-2)}\le8$ now gives:
    \begin{align*}
        \scal(\Sigma_\varrho,g_{\varrho,\kappa})>{}& u^{-\frac{n+2}{n-2}}\left(\scal(M, g_M) - \varrho^{1/4}C_4\cdot (\varrho^{1/4} + 8\varrho^{1/8})\right)\\
        \ge{}& u^{-\frac{n+2}{n-2}}\left(\scal(M, g_M) - \frac12\varrho^{1/4}\right),
    \end{align*}
    provided $\varrho_1$ satisfies $C_4\cdot (\varrho^{1/4} + 8\varrho^{1/8})<\tfrac{1}{2}$.
    Using $\varrho^{3/8}\le\varrho^{1/4}$, the same argument as in the second case, below \eqref{eq:mean-curvature-in-the-middle} shows that 
    \[\scal(\Sigma_\varrho,g_{\varrho,\kappa})>\scal(M, g_M)-\varrho_1^{1/4}.\]

    \medskip

    It remains to show positivity of the mean curvature with respect to the conformally transformed metric after passing to a smaller radius bound $\varrho_0$ which may depend on $\kappa$.
    Recall the conformal transformation formula:
    \[H(\Sigma_\varrho,g_{\kappa}) = u^{-\frac{2}{n-2}} \left(H(\Sigma_\varrho,g+\dt^2) + \frac{2n}{n-2}\scpr{\nu_\varrho,\partial_t}u^{-1}\partial_tu\right)\]
    We will again distinguish radial cases.

    \medskip

    If $r\in[\varrho^4,\varrho^2]$, we have already seen that 
    \[H(\Sigma_\varrho, g+\dt^2) > c_1\varrho^{-3/2},\]
    see \eqref{eq:mean-curvature-region-1}
    Since $\partial_t u = \tilde\kappa\eta(\alpha^{-1}t-1) + \alpha^{-1}t\tilde\kappa\eta'(\alpha^{-1}t-1) \ge -3\alpha^{-1}$, $u^{-1}\le1$, $\tilde\kappa\le1$ and $\scpr{\nu_\varrho,\partial_t}\le1$, we thus obtain for $C_5\coloneqq\tfrac{6n\alpha^{-1}}{(n-2)}$:
    \begin{align}\label{eq:mean-curvature-after-trafo-region-1}
        H(\Sigma_\varrho,g_{\kappa}) > u^{-\frac{2}{n-2}} \left(c_1\varrho^{-3/2} - C_5\right).
    \end{align}
    If $\varrho_0$ is chosen such that $c_1\varrho_0^{-3/2}-C_5\ge2^{2/(n-2)}\ge u^{2/(n-2)}\kappa$, then 
    \[H(\Sigma_\varrho,g_{\kappa})>\kappa>(1-\alpha)\kappa-\varrho^{1/4}\]
    as desired.

    \medskip

    The cases $r\in[\varrho^2,\sqrt\varrho]$ and $r\in[\sqrt\varrho,2\sqrt\varrho]$ can be treated simultaneously.
    We first observe, that \eqref{eq:mean-curvature-in-the-middle} and \eqref{eq:mean-curvature-in-the-end} both give
    \begin{align}\label{eq:sth1}
        H(\Sigma_\varrho, g+\dt^2) > -\varrho^{1/4},
    \end{align}
    provided $\varrho_0$ is small enough.
    Furthermore, combining $\scpr{\nu_\varrho,\partial_t}_{g+\dt^2}-\scpr{\overline\nu_\varrho,\partial_t}_{\delta+\dt^2}\gtrsim-\varrho$ from \cref{lem:curvature-estimates-black-box} and $\scpr{\overline\nu_\varrho,\partial_t}_{\delta+\dt^2}-1\gtrsim-\varrho^{1/2}$ from \cref{Prop: Euclidean Handle}, we find a constant $C_6>0$ such that we have
    \begin{align}\label{eq:sth2}
        \scpr{\nu_\varrho,\partial_t}_{g+\dt^2} \ge 1  - C_6\varrho^{1/2}\ge 1-\frac12\varrho^{1/4}
    \end{align}
    if $\varrho_0\le C_6^{-4}$.
    From \eqref{eq:sth1} and \eqref{eq:sth2}, together with $\partial_tu = \tilde\kappa=\tfrac{n-2}{2n}\kappa$ and $u^{-1}\ge 1-\varrho_1$, we obtain
    \begin{align*}
        H(\Sigma_\varrho,g+\dt^2) + \frac{2n}{n-2}\scpr{\nu_\varrho,\partial_t}u^{-1}\partial_tu &>-\frac12\varrho^{1/4}+(1-\varrho^{1/4})(1-\varrho_1)\kappa\\
            &=(1-\varrho_1)\kappa - \frac12\varrho^{1/4} - \varrho^{1/4}(1-\varrho_1)\kappa\\
            &\ge (1-\varrho_1)\kappa -\varrho^{1/4}
    \end{align*}
    Note, that $u^{-\frac{2}{n-2}}\ge 1-2\varrho_1$, and we choose $\varrho_0\le \tfrac{1}{2}(1-\varrho_1)^4\kappa^4$ small enough, so that $(1-\varrho_1)\kappa -\varrho^{1/4}>0$ as well. The transformation formula \eqref{eq:conformal-change-for-mean-curvature} then gives
    \begin{align}\label{eq:mean-curvature-after-trafo-region-2}
        \begin{split}
            H(\Sigma_\varrho,g_{\kappa}) 
            >{}& (1-2\varrho_1) \Bigl((1-\varrho_1)\kappa -\varrho^{1/4}\Bigr)\\
            ={}& (1-3\varrho_1)\kappa - \varrho^{1/4}\ge (1-\alpha)\kappa - \varrho^{1/4},
        \end{split}
    \end{align}
    which finishes the proof.
\end{proof}

Let us now dive into the proof of \cref{lem:curvature-estimates-black-box}. 

\begin{proof}[Proof of \cref{lem:curvature-estimates-black-box}]
    By the Gauss equation we have
    \begin{align}\label{eq:scalar-curvature-first-formula}
        \begin{split}
            &\scal(\Sigma_\varrho,g_{\varrho}) - \underbrace{\scal(M\times[0,1],g+\dt^2)}_{=\scal(M, g_M)\circ\pr_M}\\
                &\qquad={}H(\Sigma_\varrho,g+\dt^2)^2-|A(\Sigma_\varrho,g+\dt^2)|^2
                -2\Ric(M\times[0,1],g+\dt^2)(\nu_\varrho,\nu_\varrho),
        \end{split}
    \end{align}
    where $\nu_\varrho$ is the upwards unit normal vector of $\Sigma_\varrho$ with respect to the ambient metric $g+\dt^2$ and $\pr_M\colon M\times[0,1]\to M$ is the projection.
    Again, we will omit \enquote{$\circ\pr_M$} from now on.
    In order to estimate the summands from the right-hand side of  \eqref{eq:scalar-curvature-first-formula}, consider the symmetric $2$-tensor $\gamma$ on $\mathcal{S}$ defined as the first order Taylor expansion of $g$ in normal direction, that is  
    \begin{equation}\label{eq: definition gamma}
        \gamma_{\alpha\beta}=g_{\alpha\beta}|_\mathcal{S}+\sum_{i=1}^kx^ig_{\alpha\beta,i}|_\mathcal{S}
    \end{equation}
    which we extend canonically to $B_R^k$.
    Since the tensor $\gamma$ precisely describes the $1$-jet of the metric $g$ along $\mathcal{S}$, there exists an $\eps>0$ such that the restriction of $\gamma$ to the disk-bundle of radius $\eps$ is positive definite and hence a Riemannian metric. 
    If $r$ denotes the radial component of the fiber of the tubular neighbourhood $D(\mathcal{S})$ in $M$, we have
    \[|g-\gamma|_{\gamma} \le C\cdot r^2,\]
    since $g_{\alpha\beta}(x)=g_{\alpha\beta }(p)+x^i g_{\alpha\beta,i}(p)+O(r^2)$ by the Taylor remainder estimate.
    The advantage of working with the background metric $\gamma+\dt^2$ is that the expressions for $H$ and $|A|$ can be readily compared to the corresponding expressions for Euclidean handles as described in \cref{sec:euclidean-handles}.

    \medskip

    The proof now consists of investigating the difference in second fundamental form, mean curvature and normal vector when passing from the background metric $g+\dt^2$ to $\gamma+\dt^2$ and afterwards relating those terms for $\gamma+\dt^2$ to the corresponding ones for the Euclidean background metric $\delta+\dt^2$ in the fibers.
    Let us abbreviate 
    \begin{align*}
        A_g\coloneqq{}& A(\Sigma_\varrho, g+\dt^2) && A_\gamma \coloneqq A(\Sigma_\varrho, \gamma+\dt^2) && A_\delta \coloneqq A(\overline\Sigma_\varrho, \delta+\dt^2)\\
        H_g\coloneqq{}& H(\Sigma_\varrho, g+\dt^2) && H_\gamma \coloneqq H(\Sigma_\varrho, \gamma+\dt^2) && H_\delta \coloneqq H(\overline\Sigma_\varrho, \delta+\dt^2)
    \end{align*}
    Furthermore, we denote by $\nu_g$, $\nu_\gamma$ and $\nu_\delta$ the respective upward unit normal vector fields of $\Sigma_\varrho$ or $\overline\Sigma_\varrho$.
    We have the following comparison formulae
    \begin{align*}
        \|\nu_g - \nu_\gamma\|_{\gamma+\dt^2} \lesssim{}& r^2,\qquad &|A_g-A_\gamma|_{\gamma+\dt^2}\lesssim{}& (1+|A_\gamma|_{\gamma+\dt^2})\cdot r^2,
    \end{align*}
    where the latter identity follows from $ A_g(e_i,e_j)=-\langle \nabla^{g+dt^2}_{e_i}e_j,\nu_g\rangle_{ g}$ and 
    $|\Gamma^{m}_{ij}|_g\lesssim r^2$ for the Christoffel symbols $\Gamma_{ij}^m=\tfrac12g^{ml}(\nabla^\gamma_i g_{lj}+\nabla^\gamma_jg_{li}+\nabla^\gamma_lg_{ij})$.

    \medskip

    Since the difference of $\Sigma_\varrho$ and $M\times\{0\}$ is supported on $\{r\le 2\sqrt{\varrho}\}$, we thus obtain 
    \begin{align*}
        |H_g-H_\gamma| ={} |\tr(A_g)-\tr(A_\gamma)|\lesssim{} &\varrho\cdot(1+|A_\gamma|_{\gamma+\dt^2})\\
        \bigl||A_g|_{g+\dt^2}-|A_\gamma|_{\gamma+\dt^2}\bigr|\lesssim{}& \varrho\cdot(1+|A_\gamma|_{\gamma+\dt^2})
    \end{align*}
    and therefore 
    \begin{align*}
        \bigl||A_g|_{g+\dt^2}^2 &- |A_\gamma|_{\gamma+\dt^2}^2\bigr| \\
            \le{}& \bigl||A_g|_{g+\dt^2} - |A_\gamma|_{\gamma+\dt^2}\bigr|^2 + 2|A_\gamma|_{\gamma+\dt^2}\bigl||A_g|_{g+\dt^2} - |A_\gamma|_{\gamma+\dt^2}\bigr|\\
            \lesssim{}& \varrho^2\cdot(1+|A_\gamma|_{\gamma+\dt^2})^2 + 2|A_\gamma|_{\gamma+\dt^2}(1+|A_\gamma|_{\gamma+\dt^2})\cdot \varrho\\
            \lesssim{}& \varrho\cdot  (1+|A_\gamma|_{\gamma+\dt^2})\cdot \left(\varrho(1+|A_\gamma|_{\gamma+\dt^2}) + 2|A_\gamma|_{\gamma+\dt^2}\right)\\
            \lesssim{}& \varrho\cdot (1+|A_\gamma|_{\gamma+\dt^2})^2\\
        |H_g^2 - H_\gamma^2|
            \le{}& |H_g-H_\gamma|^2 + 2|H_\gamma|\cdot |H_g-H_\gamma|\\
            \lesssim{}& \varrho^2\cdot(1+|A_\gamma|_{\gamma+\dt^2})^2 + n|A_\gamma|_{\gamma+\dt^2}(1+|A_\gamma|_{\gamma+\dt^2})\cdot \varrho\\
            \lesssim{}& \varrho\cdot(1+|A_\gamma|_{\gamma+\dt^2})\bigl((1+|A_\gamma|_{\gamma+\dt^2})\cdot \varrho + n|A_\gamma|_{\gamma+\dt^2}\bigr)\\
            \lesssim{}& \varrho\cdot(1+|A_\gamma|_{\gamma+\dt^2})^2.
    \end{align*}
    Putting these together, we obtain
    \begin{align*}
        \bigl|(H_g^2-|A_g|_{g+\dt^2}^2)-H_\gamma^2-|A_\gamma|_{\gamma+\dt^2}^2\bigr| &\lesssim \varrho\cdot (1+|A_\gamma|_{\gamma+\dt^2})^2
    \end{align*}
    If we write $\nu_g^\intercal \coloneqq \nu_g - \scpr{\nu_g,\partial_t}_{g+\dt^2}\partial_t$ and $\nu_\gamma^\intercal \coloneqq \nu_\gamma - \scpr{\nu_\gamma,\partial_t}_{\gamma+\dt^2}\partial_t$, then we observe 
    \begin{align*}
        &\bigl|\scpr{\nu_g,\partial_t}_{g+\dt^2} - \scpr{\nu_\gamma,\partial_t}_{\gamma+\dt^2}\bigr|
            \\&\qquad\le{}\underbrace{\bigl|\scpr{\nu_g,\partial_t}_{g+\dt^2} - \scpr{\nu_g,\partial_t}_{\gamma+\dt^2}\bigr|}_{\lesssim \|g-\gamma\|_\gamma\lesssim\varrho} + \underbrace{\bigl|\scpr{\nu_g,\partial_t}_{\gamma+\dt^2} - \scpr{\nu_\gamma,\partial_t}_{\gamma+\dt^2}\bigr|}_{\lesssim \|\nu_g-\nu_\gamma\|_{\gamma+\dt^2}\lesssim\varrho} \lesssim{} \varrho\\
        &\left|\|\nu_g^\intercal\|_g - \|\nu_\gamma^\intercal\|_{\gamma}\right|
            \\&\qquad\le{} \bigl|\|\nu_g^\intercal\|_{g+\dt^2} - \|\nu_\gamma^\intercal\|_{g+\dt^2}\bigr| + \bigl|\|\nu_\gamma^\intercal\|_{g+\dt^2} - \|\nu_\gamma^\intercal\|_{\gamma+\dt^2}\bigr|\\
            &\qquad\le{} \|\nu_g^\intercal-\nu_\gamma^\intercal\|_{g+\dt^2} + \bigl|\|\nu_\gamma^\intercal\|_{g+\dt^2} - \|\nu_\gamma^\intercal\|_{\gamma+\dt^2}\bigr|\\
            &\qquad\le{} \|\nu_g-\nu_\gamma\|_{g+\dt^2} + \underbrace{\bigl|\scpr{\nu_g,\partial_t}_{g+\dt^2} - \scpr{\nu_\gamma,\partial_t}_{\gamma+\dt^2}\bigr|}_{\lesssim\varrho} + \underbrace{\bigl|\|\nu_\gamma^\intercal\|_{g+\dt^2} - \|\nu_\gamma^\intercal\|_{\gamma+\dt^2}\bigr|}_{\le\|g-\gamma\|_{\gamma+\dt^2}\lesssim{}\varrho}\\
            &\qquad\lesssim{}\varrho
    \end{align*}
    Regarding the final summand in \eqref{eq:scalar-curvature-first-formula}, we note that $\Ric(M\times[0,1],g+\dt^2)(\partial_t,\partial_t)=0$ and therefore, we obtain
    \begin{align*}
        \Ric(M\times[0,1],g+\dt^2)(\nu_g,\nu_g) ={}& \|\nu_g^\intercal\|_g^2\cdot \Ric(M, g_M)\left(\frac{\nu_g^\intercal}{\|\nu_g^\intercal\|_g},\frac{\nu_g^\intercal}{\|\nu_g^\intercal\|_g}\right)\\
         \lesssim{}&\|\nu_g^\intercal\|_g^2 \lesssim \|\nu_\gamma^\intercal\|_\gamma+\varrho
    \end{align*}
    We note that the constant in the above inequality depends on the maximum of the Ricci-curvature of $g$ and hence, in particular on $g$, but is independent of $\varrho$.

    \medskip

    As mentioned above, the advantage of working with the background metric $\gamma+\dt^2$ is that the expressions for $H$ and $|A|$ can be readily compared to the corresponding expressions for Euclidean handles from \cref{sec:euclidean-handles}.
    To make this precise, we first observe that the projection
    \[
    \pi:B_R(\mathcal{S})\times[0,1]\longrightarrow B_{R}^k(0)\times[0,1]\subseteq\mathbb R^k\times[0,1]
    \]
    satisfies $D\pi(\nu_\gamma)=\nu_\delta$.
    Furthermore, the restriction of $\pi$ to the fibers is a radial isometry, so 
    \[
    \|\nu_\gamma^\intercal\|_{\gamma+\dt^2} = \|\nu_\delta^\intercal\|_{\delta+\dt^2}\quad \text{and}\quad \scpr{\nu_\gamma,\partial_t}_{\gamma+\dt^2} = \scpr{\nu_\delta,\partial_t}_{\delta+\dt^2}
    \]
    In particular, we immediately obtain $|\scpr{\nu_g,\partial_t}_{g+\dt^2} - \scpr{\nu_\gamma,\partial_t}_{\gamma+\dt^2}|\lesssim{}\varrho$, proving the final estimate from \cref{lem:curvature-estimates-black-box}.

    \medskip

    If we fix a point $x\in\Sigma_\varrho$ and choose an adapted $\gamma$-orthonormal frame $\{e_i,e_a\}$ of $\Sigma_\varrho$ at $x$, where $\{e_i\}$ are normal to $\mathcal{S}$ and $\{e_a\}$ tangent to $\mathcal{S}$, we see that $\{D\pi(e_i)\}$ form normal coordinates at $\pi(x)\in\overline\Sigma_\varrho$.
    By definition of the second fundamental form,
    \begin{align*}
        A_\gamma(e_i,e_j) ={}& -\langle\nabla^{\gamma+\dt^2}_{e_i}e_j,\nu_\gamma\rangle =  -\langle\nabla^{\delta+\dt^2}_{e_i}e_j,\nu_\delta\rangle = A_\delta(e_i,e_j).
    \end{align*}
    Next, we note that $\scpr{\nabla^{\gamma+\dt^2}_{e_a}e_j,e_i}$ and $\scpr{\nabla^{\gamma+\dt^2}_{e_a}e_b,e_i}$ are components of the second fundamental form of the fibers of the tubular neighbourhood, hence they are independent of $\varrho$, and we get $\scpr{\nabla_{e_a}e_j,e_i},\scpr{\nabla_{e_a}e_b,e_i}\lesssim1$.
    Moreover, both $\nabla_{e_a}e_j$ and $\nabla_{e_a}e_b$ are orthogonal to $\partial_t$, so we obtain
    \begin{align*}
        A_\gamma(e_a,e_j) ={}& -\langle\nabla_{e_a}e_j,\nu_\gamma\rangle = -\langle\nabla_{e_a}e_j,\nu_\gamma^\intercal\rangle\\
            ={}&-\sum_{i}\langle\nabla_{e_a}e_j,e_i\rangle \langle \nu^\intercal_\gamma,e_i\rangle\lesssim \|\nu^\intercal_\gamma\| = \|\nu^\intercal_\delta\|
    \end{align*}
    and $A_\gamma(e_a,e_b)\lesssim\|\nu_\delta^\intercal\|$, analogously.
    Thus, we have obtained
    \begin{align*}
        |A_\gamma(e_i,e_a)|\lesssim \|\nu_\delta^\intercal\|\quad
        |A_\gamma(e_a,e_b)|\lesssim \|\nu_\delta^\intercal\|\quad\text{and}\quad
        A_\gamma(e_i,e_j)=A_\delta(e_i,e_j).
    \end{align*}
    Using these, we immediately get 
    \begin{align*}
        \bigl||A_\gamma|_{\gamma+\dt^2}^2-|A_\delta|_{\delta+\dt^2}^2\bigr|\lesssim{}& \|\nu_\delta^\intercal\|^2\le \|\nu_\delta^\intercal\|\\
        |H_\gamma-H_\delta| \lesssim{}&  \|\nu_\delta^\intercal\|
    \end{align*}
    which, together with $|H_g-H_\gamma| \lesssim{} \varrho\cdot (1+|A_\gamma|_{\gamma+\dt^2})$ combines to
    \begin{align*}
        |H_g- H_\delta| \lesssim  \|\nu_\varrho^\intercal\|+\varrho\cdot (1+|A_\gamma|_{\gamma+\dt^2}),
    \end{align*}
    proving the desired mean curvature estimate.
    
    \medskip
    
    It remains to verify the scalar curvature estimate.
    We have
    \begin{align*}
        |H_\gamma^2 - H_\delta^2|
            ={}& |2H_\delta\cdot(H_\gamma - H_\delta) + (H_\gamma - H_\delta)^2|\\
            \lesssim{}& |A_\delta|_{\delta+\dt^2}\cdot \|\nu_\delta^\intercal\| + \|\nu_\delta^\intercal\| ^2\\
            \lesssim{}& \|\nu_\delta^\intercal\|(1+|A_\delta|_{\delta+\dt^2}) 
    \end{align*}
    where we used ${|H_\delta|\le n|A_\delta|}\lesssim|A_\delta|$.
    Combining these, we get 
    \begin{align*}
        (H_\gamma^2-|A_\gamma|_{\gamma+\dt^2}^2) - (H_\delta^2-|A_\delta|_{\delta+\dt^2}^2) \gtrsim{}& -\|\nu_\delta^\intercal\|(1+|A_\delta|_{\delta+\dt^2}).
    \end{align*}
    Furthermore, there exists a constant $C$ such that
    \begin{align*}
        |A_\gamma|_{\gamma+\dt^2} \le{}& \sqrt{|A_\delta|_{\delta+\dt^2}^2 + C \|\nu_\delta^\intercal\|^2}\\
            \le{}& \sqrt{|A_\delta|_{\delta+\dt^2}^2 + 2\sqrt C\|\nu_\delta^\intercal\|\cdot |A_\delta|_{\delta+\dt^2} + C \|\nu_\delta^\intercal\|^2}\\
            \le{}& |A_\delta|_{\delta+\dt^2} + C\|\nu_\delta ^\intercal\|
    \end{align*}
    and therefore $|A_\gamma|_{\gamma+\dt^2} -  |A_\delta|_{\delta+\dt^2}\lesssim \|\nu_\varrho^\intercal\|\le 1$ and $1+|A_\gamma|_{\gamma+\dt^2}\lesssim 1+|A_\delta|_{\delta+\dt^2}$.
    Note that $H_\delta^2-|A_\delta|_{\delta+\dt^2}^2$ is precisely the scalar curvature of $(\Sigma_\varrho,g_{\varrho,\delta})$, where $g_{\varrho,\delta}$ is induced by the Euclidean metric $\delta+\dt^2$.
    Thus, we obtain
    \begin{align*}
        &(H_g^2-|A_g|_{g+\dt^2}^2) - \scal(\overline\Sigma_\varrho,g_{\varrho,\delta}) \\
        &\quad= (H_g^2-|A_g|_{g+\dt^2}^2)-(H_\gamma^2-|A_\gamma|_{\gamma+\dt^2}^2) + (H_\gamma^2-|A_\gamma|_{\gamma+\dt^2}^2)- (H_\delta^2-|A_\delta|_{\delta+\dt^2}^2)\\
        &\quad\gtrsim -\varrho\cdot(1+|A_\gamma|_{\gamma+\dt^2})^2 - \|\nu_\delta^\intercal\|\cdot(1+|A_\gamma|_{\gamma+\dt^2})\\
        &\quad\gtrsim -\varrho\cdot(1+|A_\delta|_{\delta+\dt^2})^2 - \|\nu_\delta^\intercal\|\cdot(1+|A_\delta|_{\delta+\dt^2})
    \end{align*}    
    Putting all the pieces together, we therefore obtain:
    \begin{align*}
        &\scal(\Sigma_\varrho,g_\varrho) -\scal(M, g_M) - \scal(\overline\Sigma_\varrho,g_{\varrho,\delta})\\
        &\qquad= H^2_g-|A_g|^2 - \scal(\overline\Sigma_\varrho,g_{\varrho,\delta}) -2\underbrace{\Ric(M\times[0,1],g+\dt^2)(\nu_\varrho,\nu_\varrho)}_{\lesssim\|\nu_\delta^\intercal\|+\varrho} \\
        &\qquad\gtrsim -\varrho\cdot(1+|A_\delta|_{\delta+\dt^2})^2 - \|\nu_\delta^\intercal\|\cdot(1+|A_\delta|_{\delta+\dt^2})  -\|\nu_\delta^\intercal\|+\varrho\\
        &\qquad\gtrsim -\varrho\cdot(1+|A_\delta|_{\delta+\dt^2})^2 - \|\nu_\delta^\intercal\|\cdot(1+|A_\delta|_{\delta+\dt^2})
    \end{align*}
    which finishes the proof.
\end{proof}

Next, we compare $\Sigma_\varrho$ for different values of $\varrho$.

\begin{proposition}\label{prop:monotonicity-of-arbitrary-handles}
    For every $\kappa\in (0,\varrho_1]$ there exists a smooth path of diffeomorphisms $\Phi_{\varrho}:\Sigma_{\varrho_1}\to\Sigma_\varrho$, $\varrho_0 \leq \varrho \leq \varrho_1$,  such that $\Phi_{\varrho_1}=\id_{\Sigma_{\varrho_1}}$ and for $g_{\kappa,\varrho}$ the metric on $\Sigma_\varrho$ induced by $g_{\kappa}$ (see \cref{theorem:handle-properties}) the assignment
    \[\varrho\mapsto e^\varrho\cdot\Phi_{\varrho}^\ast(g_{\varrho,\kappa})\] 
    is strictly monotonically increasing.
   
\end{proposition}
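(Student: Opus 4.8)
\emph{Proof plan.}
The plan is to carry the Euclidean monotone path of \cref{lem:monotonicity-of-Euclidean-handles} over to $\Sigma_\varrho$ by means of the $\mathrm{O}(k)$-structure of $\calN_\calS$, and then to absorb the discrepancy between the ambient metric $g_\kappa$ on $M\times[0,1]$ and its fibrewise Euclidean model into the (ample) slack of that lemma. Throughout, implied constants depend on $(M,g_M)$ but not on $\varrho$, $\varrho_0$ or $\kappa$, and I allow $\varrho_1$ to be shrunk. First I would construct the path: each $\Phi_{\varrho_1,\varrho}\colon\overline\Sigma_{\varrho_1}\to\overline\Sigma_\varrho$ from \cref{lem:monotonicity-of-Euclidean-handles} acts only on the arc-length parameter of the generating curve and fixes the $S^{k-1}$-coordinate, hence is $\mathrm{O}(k)$-equivariant; over a trivialising chart $U\subseteq\calS$ for $\calN_\calS$ set $\Phi_\varrho|_U:=\id_U\times\Phi_{\varrho_1,\varrho}$, glue by equivariance to a diffeomorphism $\Sigma_{\varrho_1}\to\Sigma_\varrho$ over $B_R(\calS)$, and extend by $\id$ over $M\setminus B_R(\calS)$ (where $\Sigma_\varrho=M\times\{0\}$ does not depend on $\varrho$). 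This is smooth in $\varrho$ and $\Phi_{\varrho_1}=\id$, since $q_{\varrho_1,\varrho_1}=0$ in \eqref{reparametrization family}.

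Next I would set up the comparison. Write $g_{\kappa,\varrho}=\varphi_\kappa\cdot\hat g_\varrho$, where $\varphi_\kappa:=u_\kappa^{4/(n-2)}$ is a function of $t$ alone (hence $\varrho$-independent as a function on $M\times[0,1]$, with $1\le\varphi_\kappa\le 2^{4/(n-2)}$) and $\hat g_\varrho$ is the metric induced by $g_M+dt^2$. Let $\gamma$ be the Fermi tensor \eqref{eq: definition gamma}: it is affine in the fibre coordinate, restricts to the flat metric on each fibre, and satisfies $|g_M-\gamma|_\gamma\lesssim r^2$ and $|\nabla^\gamma(g_M-\gamma)|_\gamma\lesssim r$. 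Let $\hat g_\varrho^\gamma$ be the metric induced on $\Sigma_\varrho$ by $\gamma+dt^2$. Over $U$ the fibre-and-$t$ part of $\gamma+dt^2$ is the flat $\delta+dt^2$, so $\hat g_\varrho^\gamma$ equals the Euclidean handle metric $g_{\varrho,\delta}$ of \eqref{warpedprod} on the generating-curve and $S^{k-1}$ directions, plus a term $\beta_\varrho$ supported on $T\calS$ that is affine in the fibre coordinate of the relevant point of $\overline\Sigma_\varrho$ (possibly with a base–fibre cross term of order $r$). Since $\Phi_\varrho=\id_U\times\Phi_{\varrho_1,\varrho}$ fibrewise,
\[
\Phi_\varrho^*g_{\kappa,\varrho}=(\varphi_\kappa\circ\Phi_\varrho)\Bigl(\Phi_{\varrho_1,\varrho}^*g_{\varrho,\delta}+\Phi_\varrho^*\beta_\varrho+\Phi_\varrho^*(\hat g_\varrho-\hat g_\varrho^\gamma)\Bigr),
\]
with $\Phi_\varrho^*\beta_\varrho=g_\calS+O(r_\varrho)$, and, because $\hat g_\varrho^\gamma$ is the restriction of $\gamma+dt^2$ (so pushforwards by $\Phi_\varrho$ are measured in the $\gamma$-norm automatically) and $r\le 2\sqrt\varrho$ on the support of the bending, $|\Phi_\varrho^*(\hat g_\varrho-\hat g_\varrho^\gamma)|\lesssim\varrho\,\Phi_\varrho^*\hat g_\varrho^\gamma$ uniformly as quadratic forms; in particular $\Phi_\varrho^*\hat g_\varrho\ge\tfrac12\Phi_\varrho^*\hat g_\varrho^\gamma$ for small $\varrho$.

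Then I would differentiate the displayed identity in $\varrho$ and show the result is $>-\Phi_\varrho^*g_{\kappa,\varrho}$, which by scaling is precisely strict monotonicity of $\varrho\mapsto e^{\varrho}\Phi_\varrho^*(g_{\kappa,\varrho})$. The main term $\partial_\varrho(\Phi_{\varrho_1,\varrho}^*g_{\varrho,\delta})$ is $\ge-\eps\,\Phi_{\varrho_1,\varrho}^*g_{\varrho,\delta}$ for any prescribed $\eps>0$ (once $\varrho_1$ is small) by \cref{lem:monotonicity-of-Euclidean-handles} and the remark after it. The remaining contributions are lower order: (i) $|\partial_\varrho(\varphi_\kappa\circ\Phi_\varrho)|\lesssim\varrho^{1/4}$, because $\varphi_\kappa$ depends only on $t$ and, by a refinement of \cref{Lemma length estimate for handles}, the velocity field $\partial_\varrho\Phi_\varrho$ and its covariant derivative are $O(\varrho^{1/4})$ (using, as in \cref{lem:monotonicity-of-Euclidean-handles}, that $r_\varrho'$ vanishes on $\supp\partial_\varrho\phi_{\varrho_1,\varrho}$), while $\varphi_\kappa\ge 1$; (ii) $\partial_\varrho(\Phi_\varrho^*\beta_\varrho)=O(\varrho^{1/4})$ while $\Phi_\varrho^*\beta_\varrho\ge\tfrac12 g_\calS$; (iii) since $\hat g_\varrho-\hat g_\varrho^\gamma$ is the restriction of the fixed tensor $\mathcal{E}:=g_M-\gamma$, the $\varrho$-derivative of its $\Phi_\varrho$-pullback is governed by $\mathcal{E}$, $\nabla^\gamma\mathcal{E}$ and $\partial_\varrho\Phi_\varrho$, and the bounds $|\mathcal{E}|_\gamma\lesssim r^2$, $|\nabla^\gamma\mathcal{E}|_\gamma\lesssim r$, $r\le 2\sqrt\varrho$ and (i) give $\partial_\varrho\bigl(\Phi_\varrho^*(\hat g_\varrho-\hat g_\varrho^\gamma)\bigr)\gtrsim-\sqrt\varrho\,\Phi_\varrho^*\hat g_\varrho^\gamma$, again uniformly. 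Combining these, choosing $\eps$ and then $\varrho_0$ small, and using $\Phi_\varrho^*\hat g_\varrho^\gamma\lesssim\Phi_\varrho^*\hat g_\varrho=\varphi_\kappa^{-1}\Phi_\varrho^*g_{\kappa,\varrho}$, yields $\partial_\varrho(\Phi_\varrho^*g_{\kappa,\varrho})>-\Phi_\varrho^*g_{\kappa,\varrho}$; over $M\setminus B_R(\calS)$ the pulled-back metric is constant in $\varrho$, so nothing is needed there.

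The main obstacle is step (iii). Because $g_{\varrho,\delta}$ degenerates—its $S^{k-1}$-block is of order $r_\varrho^2$, hence $\sim\varrho^8$ near the core, while $|A(\overline\Sigma_\varrho)|$ blows up—an \emph{absolute} smallness of $\mathcal{E}$ and of the $\varrho$-derivative of its pullback is useless; one needs control \emph{relative to $\hat g_\varrho^\gamma$ in every direction}, and this is exactly what $|\mathcal{E}|_\gamma\lesssim r^2$ and $|\nabla^\gamma\mathcal{E}|_\gamma\lesssim r$ deliver once one exploits that $\hat g_\varrho^\gamma$ is the restriction of $\gamma+dt^2$. Making this quantitative, together with the refinement of \cref{Lemma length estimate for handles} bounding the velocity of the handle family, is the metric-tensor analogue of the error bookkeeping carried out for scalar and mean curvature in \cref{lem:curvature-estimates-black-box} and \cref{remark error terms}, and is where the real work lies.
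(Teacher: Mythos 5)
Your plan follows essentially the same strategy as the paper's proof: the identical lifted reparametrization family $\Phi_{\varrho_1,\varrho}$, transfer of the Euclidean monotonicity of \cref{lem:monotonicity-of-Euclidean-handles}, and absorption of the curved-background and conformal-factor corrections into the slack of the exponential factor via the $O(\varrho^{1/4})$ bound on the motion of the handle family from \cref{Lemma length estimate for handles}. The genuine difference is the comparison device. You route everything through the Fermi tensor $\gamma$, whose fibre block is exactly flat, so that the leading term is literally the Euclidean lemma and all curvature effects are packaged in $\mathcal{E}=g_M-\gamma$ with $|\mathcal{E}|_\gamma\lesssim r^2$, $|\nabla^\gamma\mathcal{E}|_\gamma\lesssim r$ --- the metric-level analogue of the bookkeeping in \cref{lem:curvature-estimates-black-box}. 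The paper instead re-runs the proof of \cref{lem:monotonicity-of-Euclidean-handles} with the exact fibre metric $\hat g$ (the restriction of $g_M+\dt^2$ to the fibres of the tubular neighbourhood), using that the induced metrics on small distance spheres in the fibre are monotone in the radius and that $r_\varrho'$ vanishes on $\supp\partial_\varrho\phi_{\varrho_1,\varrho}$, and then dismisses the $\calS$-components and the fixed conformal factor as contributing only negligibly through the basepoint motion. Your route is more systematic and quantitative (and closer in spirit to Section 6), but it is exactly the part you defer that carries the weight; the paper's sphere-monotonicity observation lets it avoid the Taylor expansion altogether.

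There is one intermediate claim that is false as stated and on which your step (iii) leans: that the velocity field $\partial_\varrho\Phi_\varrho$ \emph{and its covariant derivative} are $O(\varrho^{1/4})$. The field itself is, by the computations behind \cref{Lemma length estimate for handles}, but its derivative along the generating curve is not uniformly small: differentiating in $\tau$ produces $\partial_\varrho f_\varrho'$-type terms, and $\partial_\varrho\xi_\varrho'(r)=-(\sqrt r-\varrho^{2})^{-1/2}-\varrho^{2}(\sqrt r-\varrho^{2})^{-3/2}$ blows up as $r\downarrow\varrho^{4}$ (and is still only of size $\varrho^{-3/4}$ in the cut-off zone), reflecting the blow-up of the handle's curvature near the core. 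Since $\partial_\varrho\bigl(\Phi_\varrho^*\mathcal{E}\bigr)=\Phi_\varrho^*(\mathcal{L}_X\mathcal{E})$ contains the terms $\mathcal{E}(\nabla X\,\cdot,\cdot)$, your estimate (iii) is not justified as written. It is repairable: precisely where $\nabla X$ is large one has $r\le\varrho^{2}$, hence $|\mathcal{E}|_\gamma\lesssim\varrho^{4}$, and the products stay $o(1)$ relative to $\Phi_\varrho^*\hat g^\gamma_\varrho$; but the rate $\sqrt\varrho$ you claim should then be weakened (something like $\varrho^{1/4}$ is what comes out, which is all the argument needs). Relatedly, you must track that the base--sphere off-diagonal entries and their $\varrho$-derivatives carry a factor of the radius, since absorbing them against the degenerate sphere block $\sim r_\varrho^{2}\,g_{S^{k-1}}$ by weighted Cauchy--Schwarz requires exactly that; asserting ``$\Phi_\varrho^*\beta_\varrho\ge\tfrac12 g_\calS$'' glosses over these cross terms. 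The paper avoids both issues because its estimates never differentiate a pulled-back error tensor: on $\supp\partial_\varrho\phi_{\varrho_1,\varrho}$ the handle is a straight cylinder, and elsewhere only the warping function and the basepoint move.
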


\begin{proof}
Recall from \eqref{reparametrization family} the family of reparametrizations
\begin{align*}
        \phi_{\varrho_1,\varrho}(\tau)=\tau-q_{\varrho_1,\varrho}\int_0^\tau\beta\left(s-\frac12\right)ds
    \end{align*}
    and their lift to $\overline\Sigma_\varrho $ and $\Sigma_\varrho$ which, by slight abuse of notation, we both denote by $ \Phi_{\varrho_1,\varrho}$.
First, we show that the proof in \cref{lem:monotonicity-of-Euclidean-handles} still goes through in case the Euclidean background metric $\delta+\dt^2$ is replaced with $g+dt^2$ restricted to the fiber of the tubular neighbourhood $B_R(\calS)\times[0,1]$.
We denote this restriction of $g_M+dt^2$ by $\hat g$.
As in the proof of \cref{lem:monotonicity-of-Euclidean-handles} we have
\begin{align*}
   \Phi_{\varrho_1,\varrho}^\ast g_{\overline\Sigma_\varrho,\hat g}(x,\tau)=
   \hat g(x,\phi_{\varrho_1,\varrho}(\tau))\bigl(\partial_\tau\phi_{\varrho_1,\varrho}(\tau),\partial_\tau\phi_{\varrho_1,\varrho}(\tau)\bigr)\dtau^2 + g_{S^{k-1}_{r_\varrho(\phi_{\varrho_1,\varrho}(\tau))}, \hat g}.
\end{align*}
Here $g_{S^{k-1}_r, \hat g} $ is the metric induced by $\hat g$ on a sphere of radius $r$ where the radius is measured with respect to $\delta$.
For $\varrho_1$ sufficiently small, the metric on the spheres $S^k_r$ is increasing as long as the radius $r\le \varrho_1$ increases.
Consequently, 
\begin{align*}
    \partial_\varrho g_{S^{k-1}_{r_\varrho(\phi_{\varrho_1,\varrho}(\tau))}, \hat g} \ge0.
\end{align*}
Moreover, we have $|\partial_\varrho\phi_{\varrho_1,\varrho}(\tau)|\lesssim \varrho^{1/4}\ll 1$.
Since the derivatives of $\hat g$ are bounded, this yields as in the proof of \cref{lem:monotonicity-of-Euclidean-handles}
\begin{align*}
    &\partial_\varrho \Bigl( \hat g(x,\phi_{\varrho_1,\varrho}(\tau))\bigl(\partial_\tau\phi_{\varrho_1,\varrho}(\tau),\partial_\tau\phi_{\varrho_1,\varrho}(\tau)\bigr)\dtau^2\Bigr)
    \\&\qquad\ge
    -\frac12  \hat g(x,\phi_{\varrho_1,\varrho}(\tau))(\partial_\tau\phi_{\varrho_1,\varrho}(\tau),\partial_\tau\phi_{\varrho_1,\varrho}(\tau))\dtau^2.
\end{align*}
Consequently,
\begin{align*}
  \partial_\varrho\left[e^{\varrho/2} \left( \hat g(x,\phi_{\varrho_1,\varrho}(\tau))(\partial_\tau\phi_{\varrho_1,\varrho}(\tau),\partial_\tau\phi_{\varrho_1,\varrho}(\tau))\dtau^2 + g_{S^{k-1}_{r_\varrho(\phi_{\varrho_1,\varrho}(\tau))}, \hat g} \right)\right]\ge0
\end{align*}
and \cref{lem:monotonicity-of-Euclidean-handles} still holds for the background metric $\hat g$.
Next, we note that the other components of the metric $g_\varrho $ are independent of $\varrho$ except for the basepoint change caused by the reparametrization $\Phi$.
Since $|\partial_\varrho\phi_{\varrho_1,\varrho}(\tau)|\lesssim \varrho^{1/4}$, and since the geometry of both $M$ and $\calS$ are bounded, the corresponding $\varrho$-derivatives of these remaining components are negligible small.
Moreover, recall that the conformal factor $ u_{\kappa}(x,t) = 1 + t \tilde\kappa \eta\left(\alpha^{-1}t-1\right)\le 1+\varrho_1$ is fixed and again that $|\partial_\varrho\phi_{\varrho_1,\varrho}(\tau)|\ll 1$.
Hence, the result follows.
Finally, we remark that the above estimates still hold in view of the smoothing construction performed at the end of \cref{sec:euclidean-handles}.
\end{proof}

Finally, we deform the metric $e^{\varrho_1}\cdot g_{\kappa,\varrho_1}$ to a metric which is independent of $\kappa$ by linear interpolation of conformal factors:
\begin{proposition}\label{prop:final-path-of-metrics}
    For $\varrho_1>0$ and every $\kappa\in(0,\varrho_1]$, define the path $g_s$, $s\in[0,1]$ of metrics on $\Sigma_{\varrho_1}$ to be induced by the family 
    \[s\mapsto e^{\varrho_1}\bigl((1-s)\cdot u + 2s\bigr)^{\frac{4}{n-2}}\cdot g_{\varrho_1}\]
    of metrics on $M\times [0,1]$, where $u=(1+t\tilde\kappa\eta(\alpha^{-1}t-1))$ as in \cref{theorem:handle-properties}.
    Then, $g_s$ is strictly monotonically increasing in $s$ and satisfies
    \[\scal(\Sigma_{\varrho_1},g_s)\ge -(|\scal(M, g_M)\circ\pr_M| + \varrho_1^{1/4}).\]
\end{proposition}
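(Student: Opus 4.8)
The plan is to treat the two claims separately. Write $w_s:=(1-s)u+2s$ for the interpolated conformal factor (restricted to $\Sigma_{\varrho_1}$), so that $g_s=e^{\varrho_1}\,w_s^{4/(n-2)}\,g_{\varrho_1}$, where $g_{\varrho_1}$ is the metric on $\Sigma_{\varrho_1}$ induced by $g_M+\dt^2$. Since $\tilde\kappa=\tfrac{n-2}{2n}\kappa\le\kappa\le\varrho_1<1$, $0\le\eta\le1$ and $0\le t\le1$, one has $1\le u\le 1+\varrho_1<2$ pointwise, hence $1+s\le w_s\le 2+\varrho_1$ and $\partial_s w_s=2-u\ge 1-\varrho_1>0$ pointwise. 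Therefore $\partial_s\big(e^{\varrho_1}w_s^{4/(n-2)}\big)=e^{\varrho_1}\tfrac{4}{n-2}w_s^{(6-n)/(n-2)}(2-u)>0$ pointwise, and since $g_{\varrho_1}$ is positive definite this gives $\partial_s g_s>0$ pointwise as a symmetric $(0,2)$-tensor, i.e.\ the asserted strict monotonicity.

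For the scalar curvature bound, since $\scal(e^{\varrho_1}h)=e^{-\varrho_1}\scal(h)$ with $e^{-\varrho_1}\in(0,1)$, and since scaling a negative number by a factor in $(0,1)$ increases it while a nonnegative number scaled by it stays $\ge-(|\scal(M,g_M)\circ\pr_M|+\varrho_1^{1/4})$, it suffices to prove the pointwise inequality
\[
  \scal\big(\Sigma_{\varrho_1},\,w_s^{4/(n-2)}g_{\varrho_1}\big)\ \ge\ -\big(|\scal(M,g_M)\circ\pr_M|+\varrho_1^{1/4}\big).
\]
The point is that $w_s$ shares the structural features of $u=u_\kappa$ that drive the proof of \cref{theorem:handle-properties}: (a) $1\le w_s\le 2+\varrho_1$, so $w_s^{-(n+2)/(n-2)}$ and $w_s^{-2/(n-2)}$ lie between fixed positive constants and $1$; (b) $\partial_t w_s=(1-s)\partial_t u$ and $\partial_t^2 w_s=(1-s)\partial_t^2 u$, so $w_s$ inherits, up to the harmless factor $(1-s)\le1$, all the derivative bounds on $u$ used there (in particular $|\partial_t^2 w_s|\lesssim\varrho_1$); and (c) wherever $\scpr{\nu_{\varrho_1},\partial_t}\neq0$ the point lies on the graph of $f_{\varrho_1}$ (or on $M\times\{0\}$), where $0\le t=f_{\varrho_1}(r)\le a_{\varrho_1}<2\varrho_1<\alpha$, so that $\eta(\alpha^{-1}t-1)=1$, $u=1+t\tilde\kappa$, and hence $\partial_t w_s=(1-s)\tilde\kappa\in[0,1]$ and $\partial_t^2 w_s=0$.

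With (a)--(c) in hand one repeats the case analysis in the proof of \cref{theorem:handle-properties} with $w_s^{4/(n-2)}$ in place of $u^{4/(n-2)}$ in the (general) conformal change formula \eqref{eq:conformal-change-for-scalar-curvature} of \cref{lem:conformal-change}, using that $\scal(\Sigma_{\varrho_1},g_{\varrho_1})$, $H(\Sigma_{\varrho_1},g_M+\dt^2)$, $\scpr{\nu_{\varrho_1},\partial_t}$ and $\|\nu_{\varrho_1}^\intercal\|$ are independent of $\kappa$ and $s$ and therefore obey the bounds \eqref{eq:scalar-curvature-estimate-region1}, \eqref{eq:mean-curvature-region-1}, \eqref{eq:scalar-curvature-estimate-region2}, \eqref{eq:scalar-curvature-estimate-region3}, \eqref{eq:mean-curvature-in-the-middle}, \eqref{eq:mean-curvature-in-the-end} established there. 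On $\{r\ge2\sqrt{\varrho_1}\}$ the hypersurface is $M\times\{0\}$ and $w_s\equiv1+s$, so $g_s$ restricts to $e^{\varrho_1}(1+s)^{4/(n-2)}g_M$ with conformal factor $\ge1$, whence $\scal(\Sigma_{\varrho_1},g_s)=e^{-\varrho_1}(1+s)^{-4/(n-2)}\scal(M,g_M)\ge-|\scal(M,g_M)|$. On $\{\varrho_1^4\le r\le\varrho_1^2\}$ the mean-curvature term of \eqref{eq:conformal-change-for-scalar-curvature} is $\ge0$ (on the graph $\partial_t w_s\ge0$ and $H(\Sigma_{\varrho_1},g_M+\dt^2)>0$; on the cylinder $\scpr{\nu_{\varrho_1},\partial_t}=0$ and $\partial_t^2 w_s$ contributes only an $O(\varrho_1)$ error), while $\scal(\Sigma_{\varrho_1},g_{\varrho_1})>\scal(M,g_M)+c_0\varrho_1^{-3}$, so after the conformal change the scalar curvature is positive for $\varrho_1$ small. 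On $\{\varrho_1^2\le r\le2\sqrt{\varrho_1}\}$ one has $\partial_t^2 w_s=0$, $|H(\Sigma_{\varrho_1},g_M+\dt^2)|\lesssim\varrho_1^{3/8}$ and $\partial_t w_s,\scpr{\nu_{\varrho_1},\partial_t}\le1$, so exactly as there $\scal(\Sigma_{\varrho_1},w_s^{4/(n-2)}g_{\varrho_1})>w_s^{-(n+2)/(n-2)}\big(\scal(M,g_M)-\tfrac12\varrho_1^{1/4}\big)$ for $\varrho_1$ small; distinguishing the sign of $\scal(M,g_M)-\tfrac12\varrho_1^{1/4}$ and using $1\ge w_s^{-(n+2)/(n-2)}\ge(2+\varrho_1)^{-(n+2)/(n-2)}$ yields the displayed bound. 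Combining the cases and restoring the factor $e^{-\varrho_1}$ finishes the estimate, and all of this survives the $C^1$-smoothing performed at the end of \cref{sec:euclidean-handles}.

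The substance of the argument is the verification and use of property (c): on the graphical part of the handle the $t$-coordinate is small enough that the conformal factor stays affine in $t$ with nonnegative slope, which is what prevents the mean-curvature correction $\tfrac{4(n-1)}{n-2}H(\Sigma_{\varrho_1},g_M+\dt^2)\scpr{\nu_{\varrho_1},\partial_t}\partial_t w_s$ from carrying the wrong sign where $H$ is large positive, and makes it a negligible $O(\varrho_1^{3/8})$ where $H$ may be negative. The one genuine difference from \cref{theorem:handle-properties} is that $w_s$ now ranges up to $\approx2$ instead of $1+O(\varrho_1)$, so the conformal prefactor is controlled only by fixed constants; this is precisely why the conclusion is the two-sided-looking bound $\scal\ge-(|\scal(M,g_M)\circ\pr_M|+\varrho_1^{1/4})$ rather than the one-sided bound $\scal>\scal(M,g_M)\circ\pr_M-\varrho_1^{1/4}$ of \cref{theorem:handle-properties}.
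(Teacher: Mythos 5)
Your proposal is correct in substance, and your monotonicity argument is the same as the paper's, but for the scalar curvature bound you take a genuinely different route. The paper never revisits the handle geometry: it writes the intrinsic conformal change on $\Sigma_{\varrho_1}$ as $\scal\bigl(u_s^{4/(n-2)}g_{\varrho_1}\bigr)=u_s^{-\frac{n+2}{n-2}}\bigl(\scal(g_{\varrho_1})\,u_s-\tfrac{4(n-1)}{n-2}\Delta_{g_{\varrho_1}}u_s\bigr)$ and exploits that the expression in parentheses is \emph{linear} in the conformal factor, so with $u_s=(1-s)u+2s$ it splits into $(1-s)\,u^{\frac{n+2}{n-2}}\scal(\Sigma_{\varrho_1},g_{\varrho_1,\kappa})+2s\,\scal(\Sigma_{\varrho_1},g_{\varrho_1})$; both endpoint curvatures satisfy the bound (the first by \cref{theorem:handle-properties}, the second by \eqref{eq:scalar-curvature-estimate-region1}, \eqref{eq:scalar-curvature-estimate-region2}, \eqref{eq:scalar-curvature-estimate-region3}), and the elementary inequality $(1-s)u^{\frac{n+2}{n-2}}+2s\le u_s^{\frac{n+2}{n-2}}$ finishes the proof in a few lines. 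You instead rerun the regionwise analysis of \cref{theorem:handle-properties} with the interpolated factor $w_s$, which is viable precisely because of your observation (c): $w_s$ is affine in $t$ with nonnegative slope on the graphical part and satisfies $1\le w_s\le 2+\varrho_1$. The paper's argument is shorter and requires no re-checking of constants; yours is more laborious but does not rely on the linearity trick and makes transparent why only the weaker bound $-(|\scal(M,g_M)\circ\pr_M|+\varrho_1^{1/4})$ survives once the conformal factor is of size $\approx 2$ rather than $1+O(\varrho_1)$ -- a point you state correctly.

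One intermediate claim needs repair: on the merged region $\{\varrho_1^2\le r\le 2\sqrt{\varrho_1}\}$ the two-sided bound $|H(\Sigma_{\varrho_1},g_M+\dt^2)|\lesssim\varrho_1^{3/8}$ is false on the subregion $[\varrho_1^2,\sqrt{\varrho_1}]$: near $r=\varrho_1^2$ the principal curvatures, and hence $H$, are of order $\varrho_1^{-3/2}$ (see the computation for part (iii) of \cref{Prop: Euclidean Handle}); the estimate $|A|\lesssim\varrho_1^{3/8}$ is only available for $r\ge\sqrt{\varrho_1}$. What is true, and what your closing paragraph already invokes, is the one-sided bound $H(\Sigma_{\varrho_1},g_M+\dt^2)>-C\varrho_1^{1/2}$ from \eqref{eq:mean-curvature-in-the-middle} combined with $\scpr{\nu_{\varrho_1},\partial_t}\ge0$ and $\partial_t w_s=(1-s)\tilde\kappa\ge0$: where $H$ is large it is positive, so the correction term in \eqref{eq:conformal-change-for-scalar-curvature} has the favourable sign, and where $H$ may be negative the term is only $O(\varrho_1^{1/2})$. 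With that substitution (and the analogous bound \eqref{eq:mean-curvature-in-the-end} for $r\ge\sqrt{\varrho_1}$) your regionwise estimate goes through as described, so this is a local misstatement rather than a gap in the argument.
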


\begin{proof}
    Since $u_s=(1-s)\cdot u + 2s$ is the linear path and $u\le2$, the monotonicity of $g_s$ follows immediately.
    Concerning the scalar curvature estimate, we recall the conformal transformation formula for scalar curvature we have
   \begin{align*}
        \scal(\Sigma_{\varrho_1},e^{\varrho_1} u_s^{\frac4{n-2}}\cdot g_{\varrho_1})
            ={}& e^{-\varrho_1}u_s^{-\frac{n+2}{n-2}}\left(\scal(\Sigma_{\varrho_1},g_{\varrho_1})u_s - \frac{4(n-1)}{n-2}(\Delta_{g_{\varrho_1}}u_s)\right)\\
            ={}& e^{-\varrho_1}u_s^{-\frac{n+2}{n-2}}\Bigl((1-s)\underbrace{\left(\scal(\Sigma_{\varrho_1},g_{\varrho_1}) u - \frac{4(n-1)}{n-2}(\Delta_{g_{\varrho_1}}u)\right)}_{= u^{(n+2)/(n-2)}\scal\left(\Sigma_{\varrho_1}, g_{\varrho_1,\kappa}\right)}\\
                &\qquad\qquad\qquad\qquad\qquad+ 2s\cdot\scal(\Sigma_{\varrho_1},g_{\varrho_1})\Bigr).
    \end{align*}
    Note that 
    \[\scal(\Sigma_{\varrho_1},g_{\varrho_1,\kappa})\ge\scal(M, g_M)\circ\pr_M-\varrho_1^{1/4}\ge -|\scal(M, g_M)\circ\pr_M| - \varrho_1^{1/4}.\]
    Furthermore, the proof of \cref{theorem:handle-properties} reveals that the same inequalities hold for $\scal(\Sigma_{\varrho_1},g_{\varrho_1})$ as well, see \eqref{eq:scalar-curvature-estimate-region1}, \eqref{eq:scalar-curvature-estimate-region2} and \eqref{eq:scalar-curvature-estimate-region3}. 
    Thus, we obtain
    \begin{align*}
        \scal(\Sigma_{\varrho_1},&e^{\varrho_1}\cdot u_s^{\frac4{n-2}}\cdot g_{\varrho_1})\\
            \ge{}& - e^{-\varrho_1}u_s^{-\frac{n+2}{n-2}}\underbrace{\left( (1-s)\cdot u^{\frac{n+2}{n-2}} + s\cdot 2 \right)}_{\le ((1-s)\cdot u + s\cdot 2)^{{(n+2)}/{(n-2)}}}(|\scal(M, g_M)\circ\pr_M| + \varrho_1^{1/4})\\
            \ge{}& -(|\scal(M, g_M)\circ\pr_M| + \varrho_1^{1/4})\qedhere
    \end{align*}
\end{proof}

\begin{remark}\label{rem: scalar curvature is great}
    If the scalar curvature of $g_M$ is positive, we can arrange for $\scal(g_s)$ to be positive in \cref{prop:final-path-of-metrics}: If $\varrho_1$ is chosen such that $\scal(M, g_M)\circ\pr_M-\varrho_1^{1/4}>0$, then $\scal(\Sigma_{\varrho_1},g_{\varrho_1})$ and $\scal(\Sigma_{\varrho_1},g_{\varrho_1,\kappa})$ can be bounded by $\scal(M, g_M)\circ\pr_M-\varrho_1^{1/4}>0$. 
    Since $(1-s)\cdot u^{\frac{n+2}{n-2}} + s\cdot 2\ge u_s$, we obtain 
    \[\scal(\Sigma_{\varrho_1},g_s)\ge e^{-\varrho_1} u_s^{-\frac{4}{n-2}} (\scal(M, g_M)\circ\pr_M-\varrho^{1/4})>0.\]
\end{remark}

\begin{remark}
    \cref{theorem:handle-properties} has an analogue for initial data sets $(M,g_M,k)$, if we replace the scalar curvature by the dominant energy scalar $\mu-|J|$, where
    \[\mu=\frac12(\scal_{g_\Omega}+\tr(k)^2-|k|^2),\qquad J=\operatorname{div}(k-\tr(k)g).\]
    That is, if we extend $k$ trivially to $M\times[0,1]$ and multiply it by a cutoff-function supported on $[0,2a_\varrho]$, the dominant energy scalar of the hypersurface $\Sigma_\varrho$ only decreases by $\varrho^{1/4}$.
    If one goes through the proof, one encounters additional error terms for $\tfrac{1}{2}(\tr_{g_M}(k)^2-|k|^2)$, $J$ and $\tr_{g_\varrho}(k)$, which have exactly the same type as the error terms already appearing for $\scal$ and $H$, see \cref{remark error terms}.
\end{remark}

\section{Non-spin fill-ins with non-negative mean curvature}\label{sec:handle attachement}

In this section we will prove \cref{thm:mainB} by induction on the number of surgeries required to transform $M$ into a sphere.
The following is the main result needed for the induction step.

\begin{proposition}\label{thm:surgery-step}
    Let $(M,g_M)$ be a closed oriented Riemannian  manifold. 
    Let  $\widetilde{M}$ be a closed oriented manifold such that $\widetilde M$ is obtained from $M$ by performing an oriented surgery in codimension at least $3$.
    Then there exists 
\begin{itemize}
    \item  a Riemannian metric $g_{\widetilde M}$ on $\widetilde M$, 
    \item a smooth bordism $T_1$ from $M$ to $\widetilde M$,
    \item a constant $C \le 0$,
    \end{itemize}
    with the following property: 
    Let $(\Omega,g_{\Omega})$ be a fill-in of $(M, g_M)$ with $\scal_{g_{\Omega}} \geq \sigma$ and $H_{g_{\Omega}} \geq 0$.
    Then there exists a smooth Riemannian metric $g_{\Omega_1}$ on 
    \[
      \Omega_1 = \Omega \bigcup_{\partial \Omega \approx M} T_1
    \]
    such that the following holds: 
\begin{itemize}
 \item $g_{\Omega_1} = g_{\Omega}$ outside an arbitrarily small neighborhood of $T_1 \subset \Omega_1$, 
 \item $g_{\Omega_1}$ induces the metric $g_{\widetilde M}$ on $\partial \Omega_1 = \widetilde M \times \{1\}$,
 \item $H_{g_{ \Omega_1}} > 0$ along $\widetilde M \times \{1\}$, 
 \item $\scal_{g_{\Omega_1}} \geq \min\{\sigma, C\}-1$, 
 \item $
   \int_{\partial \Omega} H_{g_\Omega} {\rm dvol}_{g_{\partial \Omega}} < 2 \int_{\partial \Omega_1}H_{g_{ \Omega_1}} {\rm dvol}_{g_{\partial \Omega_1}}$. 
\end{itemize}
\end{proposition}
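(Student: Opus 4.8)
The plan is to realise the surgery $M\leadsto\widetilde M$ by a single Gromov--Lawson handle of the type built in \cref{sec:euclidean-handles,sec:handles}, attached to $\Omega$ along $\partial\Omega$, and then to cap off the resulting fill-in of $\widetilde M$ so that \cref{prop:criterion-hssw} becomes applicable. As in the proof of \cref{thm:mainC}, the factor $2$ in the total mean curvature estimate is produced by running the construction along two isotopic, disjointly supported copies $\psi_1,\psi_2$ of the surgery datum $\psi\colon S^{n-k}\times D^k\hookrightarrow M$ (so $k\ge 3$), say with $\im(\psi_i)\subset Z_i$ and $Z_1\cap Z_2=\emptyset$, and by taking for $\Omega_1$ whichever of the two resulting fill-ins carries the larger total mean curvature.

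Fix $i$ and a fill-in $(\Omega,g_\Omega)$ of $(M,g_M)$ with $\scal_{g_\Omega}\ge\sigma$ and $H_{g_\Omega}\ge 0$. After applying \cite[Proposition 3.8]{BaerHanke2} we may assume $H_{g_\Omega}>0$; set $\kappa:=\min\{\varrho_1,\tfrac12\min_{\partial\Omega}H_{g_\Omega}\}\in(0,\varrho_1]$, where $\varrho_1=\varrho_1(M,g_M,\alpha)$ is the constant of \cref{theorem:handle-properties} for a fixed $\alpha\in(0,\tfrac13]$ (chosen once and for all with $\varrho_1^{1/4}<1$), so that $H_{g_\Omega}\ge 2\kappa$. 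We glue the collar $(M\times[0,1],g_\kappa)$ of \cref{theorem:handle-properties} to $\Omega$ along $M\times\{0\}$; since the mean curvature of $M\times\{0\}$ as a boundary component of the collar equals $-\kappa$, the corner along $M$ is strictly mean convex ($H_{g_\Omega}-\kappa\ge\kappa>0$) and can be smoothed with a scalar curvature loss bounded independently of $\Omega$. Inside $M\times[0,1]$ sits the handle hypersurface $\Sigma_\varrho$ attached along $\psi_i$; choosing $\varrho=\varrho(\Omega)$ small enough that $H(\Sigma_\varrho,g_\kappa)>0$ (possible by \cref{theorem:handle-properties} because $\kappa>0$) and $\scal(\Sigma_\varrho,g_{\varrho,\kappa})>\scal_{g_M}\circ\pr_M-\varrho_1^{1/4}$, let $\Omega'$ be the union of $\Omega$, of the part of $M\times[0,1]$ on the $\Omega$-side of $\Sigma_\varrho$, and of a rounded cap $S^{n-k}\times(\text{cap on the }\varrho^4\text{-tube})$ closing the thin cylindrical end of $\Sigma_\varrho$ in $M\times\{1\}$. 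Because $k\ge 3$, this cap has positive scalar curvature and positive mean curvature along the new boundary, so $(\Omega',g_{\Omega'})$ is a fill-in of $(\widetilde M,g_{\varrho,\kappa})$, where $g_{\varrho,\kappa}$ is the metric on $\widetilde M=\Sigma_\varrho\cup\text{cap}$ induced by $g_\kappa$; it satisfies $H_{g_{\Omega'}}\ge 0$ and $\scal_{g_{\Omega'}}\ge\min\{\sigma,s_1\}$ for a constant $s_1=s_1(M,g_M)\le 0$ that absorbs $\varrho_1^{1/4}$, the bounded $\Delta u$-term of $g_\kappa$, and the corner-smoothing loss.

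It remains to pin down $g_{\widetilde M}$, $T_1$ and $C$. By \cref{prop:monotonicity-of-arbitrary-handles} the metrics $e^{\varrho}\Phi_\varrho^{\ast}g_{\varrho,\kappa}$ on $\widetilde M$ (extended over the cap by linear interpolation of conformal factors as in \cref{prop:final-path-of-metrics}) increase strictly in $\varrho\in[\varrho(\Omega),\varrho_1]$, and by \cref{prop:final-path-of-metrics} the metric $e^{\varrho_1}g_{\varrho_1,\kappa}$ is joined by a strictly increasing path to the \emph{$\kappa$-independent} metric $g_{\widetilde M}:=e^{\varrho_1}2^{4/(n-2)}g_{\varrho_1}$; prepending the scaling $t\mapsto e^{t\varrho(\Omega)}g_{\varrho,\kappa}$, $t\in[0,1]$, we obtain a strictly increasing path from $g_{\varrho,\kappa}$ to $g_{\widetilde M}$ along which the scalar curvature stays above a constant $C=C(M,g_M)\le 0$ (by the bounds of \cref{theorem:handle-properties,prop:final-path-of-metrics}, together with the fact that the $(k-1)$-spheres occurring in the handle and its cap have radius $\le\varrho_1^4$, so for $k\ge 3$ their contribution to the scalar curvature is uniformly positive). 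Hence $g_{\varrho,\kappa}\in\mathscr{R}_{g_{\widetilde M},C}(\widetilde M)$, and \cref{prop:criterion-hssw} applied to $(\Omega',g_{\Omega'})$ yields a metric $g_{\Omega_1}$ on $\Omega_1=\Omega'\cup_{\widetilde M}(\widetilde M\times[0,1])=\Omega\cup_M T_1$, where $T_1$ is the topologically fixed bordism stacking the surgery handle onto $\widetilde M\times[0,1]$, such that $g_{\Omega_1}=g_\Omega$ outside a small neighbourhood of $T_1$, $g_{\Omega_1}$ induces $g_{\widetilde M}$ on $\partial\Omega_1$, $H_{g_{\Omega_1}}>0$, $\scal_{g_{\Omega_1}}\ge\min\{\sigma,s_1,C\}-1$, and $\int_{\partial\Omega'}H_{g_{\Omega'}}<\int_{\partial\Omega_1}H_{g_{\Omega_1}}$. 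Since all deformations in the $\psi_i$-construction are supported over $Z_i$, we have $H_{g_{\Omega'}}=H_{g_\Omega}$ on $\partial\Omega\setminus Z_i\subset\widetilde M$ and $H_{g_{\Omega'}}\ge 0$ elsewhere, hence $\int_{\partial\Omega'}H_{g_{\Omega'}}\ge\int_{\partial\Omega\setminus Z_i}H_{g_\Omega}$; writing $\Omega_1^{(1)},\Omega_1^{(2)}$ for the two resulting fill-ins and $\Omega_1$ for the one maximising the total mean curvature, the pigeonhole estimate of \eqref{eq:pigeonhole} gives
\begin{align*}
  \int_{\partial\Omega}H_{g_\Omega}
  &\le\int_{\partial\Omega\setminus Z_1}H_{g_\Omega}+\int_{\partial\Omega\setminus Z_2}H_{g_\Omega}\\
  &<\int_{\partial\Omega_1^{(1)}}H_{g_{\Omega_1^{(1)}}}+\int_{\partial\Omega_1^{(2)}}H_{g_{\Omega_1^{(2)}}}
  \le 2\int_{\partial\Omega_1}H_{g_{\Omega_1}},
\end{align*}
and relabelling $\min\{s_1,C\}$ as $C$ yields the claim.

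The crux, and the main obstacle, is that the intermediate metric $g_{\varrho,\kappa}$ on $\widetilde M$ is \emph{forced} to depend on the fill-in — the hypothesis $H_{g_\Omega}\ge 0$ supplies no uniform positive lower bound, so the handle must be shrunk and the conformal factor of $g_\kappa$ weakened as $\min_{\partial\Omega}H_{g_\Omega}\to 0$ — whereas the output data $g_{\widetilde M},T_1,C$ must not. Overcoming this is precisely the purpose of the monotonicity statements \cref{prop:monotonicity-of-arbitrary-handles,prop:final-path-of-metrics} in tandem with \cref{prop:criterion-hssw}, and it is also the reason the codimension hypothesis $k\ge 3$ is essential: it is what makes the thin $(k-1)$-spheres appearing throughout the handle and its cap carry uniformly positive scalar curvature, so that the interpolating path of metrics on $\widetilde M$ has a fill-in-independent lower scalar curvature bound.
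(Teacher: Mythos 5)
Your overall architecture is the paper's: the quantitative handle $\Sigma_\varrho$ inside the collar metric $g_\kappa$ of \cref{theorem:handle-properties}, a monotone path of boundary metrics via \cref{prop:monotonicity-of-arbitrary-handles} and \cref{prop:final-path-of-metrics}, and then \cref{prop:criterion-hssw}. But the way you produce the factor $2$ breaks the statement you are proving. Running the construction along two disjoint copies $\psi_1,\psi_2$ and taking for $\Omega_1$ whichever output has larger total mean curvature makes the boundary metric and the bordism of $\Omega_1$ equal to $(g^{(i)}_{\widetilde M},T^{(i)})$ with $i=i(\Omega,g_\Omega)$; these two metrics are genuinely different (they are built from handles supported in disjoint regions of $M$), so you have not exhibited a single triple $(g_{\widetilde M},T_1,C)$, chosen before quantifying over fill-ins, for which the factor-$2$ inequality holds — which is exactly what \cref{thm:surgery-step} asserts and what its use in the induction requires. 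The paper works with one surgery datum and obtains the factor $2$ from \eqref{eq:total-mean-curvature-on-3rho0}: after the small deformation making $H_{g_\Omega}>0$, one chooses $\varrho_0=\varrho_0(\Omega)$ so small that $\int_{\partial\Omega}H_{g_\Omega}\le 2\int_{\partial\Omega\setminus B_{4\sqrt{\varrho_0}}(\mathcal{S})}H_{g_\Omega}$, pushing the fill-in dependence entirely into $\varrho_0$, which the monotone path then absorbs. Your doubling could only be salvaged by weakening the proposition to ``one of two fixed targets works'', which would still feed the induction for \cref{thm:mainB} but is not the stated result; since your $Z_i$ are fixed and do not shrink, you cannot drop the doubling without importing the paper's $\varrho_0$-shrinking argument.

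The gluing step also fails as written. You glue the entire collar $(M\times[0,1],g_\kappa)$ to $\Omega$ and smooth ``the corner along $M$'', and afterwards declare $\Sigma_\varrho$ (plus cap) to be the new boundary. But away from the handle $\Sigma_\varrho$ coincides with $M\times\{0\}$, i.e.\ with the corner locus itself: smoothing the metric across $M$ destroys precisely the boundary data you need to retain there ($g_{\Omega_1}=g_\Omega$ away from $T_1$, induced metric $g_M$, and $H_{g_{\Omega'}}=H_{g_\Omega}$ on $\partial\Omega\setminus Z_i$, which your pigeonhole estimate uses). Moreover the true singularity is not a boundary corner but an interior crease along $B_{2\sqrt{\varrho}}(\mathcal{S})$, a hypersurface with boundary meeting $\partial\Omega'$, to which the closed-hypersurface corner-smoothing result does not apply. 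The paper avoids both problems by first deforming $g_\Omega$ only near the surgery region so that its $1$-jet matches that of $g_\kappa$ (second fundamental form $\tfrac{\kappa}{n}g_M$), via \cite[Theorem 3.7]{BaerHanke2} and the local flexibility lemma \cite[Theorem 1.2]{BaerHanke1} as in \cref{sec:fixed lower H bound}; then $\Omega\cup T_0$ carries a smooth metric with all boundary data unchanged outside $B_{4\sqrt{\varrho_0}}(\mathcal{S})$, and no corner smoothing along $M$ is needed. Finally, a smaller but real slip: the cap you attach, ``$S^{n-k}\times(\text{cap on the }\varrho^4\text{-tube})$'', fills the wrong factor — closing $S^{n-k}\times S^{k-1}_{\varrho^4}$ by $S^{n-k}\times D^{k}$ reproduces $M$ rather than performing the surgery. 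The correct closure is the normal $S^{k-1}_{\varrho^4}$-bundle over the core disc $D^{n-k+1}\times\{0\}$ of the trace $T$, i.e.\ a copy of $D^{n-k+1}\times S^{k-1}$, endowed with the fixed metric $h$; its small $(k-1)$-spheres are where $k\ge 3$ enters, and the fill-in-independent monotone control over this cap is supplied by the radial dilatations \eqref{eqn:rad_dil} and \eqref{eqn:dil_in_core}, not by a conformal interpolation alone.
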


\begin{proof}
    Let $\psi\colon S^{n-k}\times D^k\embeds M$, $k \geq 3$, be the surgery datum for passing from $M$ to $\widetilde M$. 
    Let $\mathcal{S} := \psi(S^{n-k} \times 0) \subset M$ be the corresponding surgery sphere.
    The trace of $\psi$ is defined as
    \[
    T = T(\psi)\coloneqq (M\times[0,1])\cup_{\psi} (D^{n-k+1}\times D^k) . 
    \]
    It yields an oriented bordism from $M$ to  $\widetilde M$. 
    For simplicity, we will suppress the map $\psi$ from now on in our notation.

    \medskip

    Choose a metric $h$ on $T$ that restricts to $g_M+\dt^2$ on $M\times[0,1]\subset T$.
    The metric $h$ stays fixed for the remainder of the proof.
    Choose $R \in (0,  \frac{2}{\sqrt{10}}]$ smaller than the normal injectivity radius of $h$ along the core of the surgery
    \[
        \calS \times [0,1]  \;  \bigcup \; D^{n-k+1}\times\{0\} \subset T.
    \]
    In particular, we assume that $T$ contains the tubular $R$-neighborhood of its core.

    \medskip
 
    Let us choose $\alpha\in(0,\tfrac{1}{3}]$ and let $\varrho_1\in(0,\min(R^2/4, \alpha/2))$ be the first constant obtained in \cref{theorem:handle-properties} for  $\mathcal{S} \subset (M, g_M)$.
    For $0 < \varrho \leq \varrho_1$, let $B_{\varrho}(D^{n-k+1} \times 0) \subset T$ be the $\varrho$-tubular neighborhood of $D^{n-k+1} \times 0$ in $T$ and let $S_{\varrho}(D^{n-k+1} \times 0) \subset B_{\varrho}(D^{n-k+1} \times 0)$ be the normal sphere bundle of radius $\varrho$ around $D^{n-k+1} \times 0$. 
    Let $h_{S_{\varrho}}$ be the metric on $S_{\varrho}(D^{n-k+1} \times 0)$ induced by $h$.

    \medskip

    By decreasing $\alpha$ and hence $\varrho_1$, we may assume that for every $0 < \varrho\le \varrho_1$, the metric $h_{S_\varrho}$ has positive scalar curvature and $S_{\varrho} (D^{n-k+1} \times 0) \subset B_{\varrho}(D^{n-k+1} \times 0)$ has positive mean curvature.
    For $0 < \varrho \leq \varrho_1$, let 
    \begin{equation} \label{eqn:rad_dil}
        \Psi_{\varrho_1, \varrho} \colon S_{\varrho^4_1}(D^{n-k+1} \times 0) \to S_{\varrho^4}(D^{n-k+1} \times 0)
    \end{equation}
    be the diffeomorphism induced by radial dilatation in the normal bundle of $\mathcal{S}$. 
    By further decreasing $\alpha$, we can assume that the smooth path $(0, \varrho_1] \ni \varrho \mapsto \Psi_{\varrho_1, \varrho}^*(h_{\varrho^4})$ satisfies
    \[
        \partial_{\varrho} \Psi_{\varrho_1, \varrho}^*(h_{\varrho^4}) \geq -   \Psi_{\varrho_1, \varrho}^*(h_{\varrho^4}).
    \]
    This implies that the smooth path
    \begin{equation} \label{eqn:dil_in_core}
        \varrho \mapsto e^{ \varrho} \cdot \Psi_{\varrho_1, \varrho}^*(h_{\varrho^4})
    \end{equation}
    is strictly monotonically increasing.

    \medskip

    Let $\sigma \in \bbR$ and let $(\Omega,g_\Omega)$ be a  fill-in of $(M,g_M)$ with $\scal(g_\Omega)\ge \sigma$ and $H_{g_{\Omega}}\ge 0$ on $\partial \Omega$.
    By a small deformation of $g_{\Omega}$ near $\partial \Omega \subset \Omega$, leaving the induced metric on $\partial \Omega$ unchanged and replacing $\sigma$ with $\sigma - \tfrac{1}{2}$, we can assume that $H_{g_{\Omega}} > 0$ along $\partial \Omega$, see \cite[Proposition 3.8]{BaerHanke2}.
    By compactness of $\partial \Omega$, there exists a constant $\kappa = \kappa(\Omega,g_\Omega) > 0$ such that $H_{g_{\Omega}} \ge \kappa$.

\medskip

  We choose $\varrho_0=\varrho_0(\kappa)$ as the second constant from \cref{theorem:handle-properties}.
    By further decreasing $\varrho_0$ we may assume that $(1-\alpha)\kappa>\varrho_0^{1/4}$.
    By choosing $\varrho_0$ even smaller, we may assume that 
    \begin{equation}\label{eq:total-mean-curvature-on-3rho0}
        \int_{\partial\Omega} H_{g_\Omega}\dvol_{g_{\partial\Omega}}\le 2\int_{\partial\Omega\setminus B_{4\sqrt{\varrho_0}}(\mathcal{S})} H_{g_\Omega}\dvol_{g_{\partial\Omega}}.
    \end{equation}
    Note that $\varrho_1$ does not depend on the fill-in $(\Omega, g_{\Omega})$, but $\varrho_0$ does.

    \medskip

    For the given  $\kappa$, consider the metric $g_\kappa$ on $M\times[0,1]$ defined in \cref{theorem:handle-properties},
    \[
    g_{\kappa}=u^{\frac{4}{\,n-2\,}}(g_M+dt^2),\qquad u(x,t) = u_{\kappa}(x,t) = 1 + t \widetilde \kappa \eta\left(\alpha^{-1}t-1\right).
    \]
    Recall that $H_{g_\kappa} = - \kappa$ along $M \times \{0\}$ with respect to the exterior normal.
    By \cref{theorem:handle-properties} and our choice of $\varrho_0$, we have
    \[H(\Sigma_{\varrho_0}\cap (M\times[0,1]),g_\kappa) >(1-\alpha)\kappa - \varrho_0^{1/4}>0. \]
    Since $g_\kappa$ restricted to $M\times[\alpha, 1]$ equals $g_M+\dt^2$, the union of $g_{\kappa}$ and $h|_{T\setminus (M\times[0,1])}$ defines a smooth metric $g_T$ on $T$. 
    Here, $h$ is the fixed metric on  $T$ chosen before.
    Furthermore, as $g_\kappa \stackrel{\kappa \to 0}{\longrightarrow} g_M+\dt^2$ in the $C^\infty$-topology,  $\scal(T,g_T)$ is bounded from below by a constant $C(M, g_M, h)$, which is independent of $\kappa$.

    \medskip
    
    For $0 < \varrho\le \varrho_1$ we now define the hypersurface $\Sigma_\varrho\subset T$ as follows: The restriction $\Sigma_\varrho\cap (M\times[0,1])$ is given by the manifold $\Sigma_{\varrho} \subset M \times [0,1]$  constructed in the beginning of \cref{sec:handles}.
    Since $a_{\varrho} < 2\varrho \leq \alpha
    $, we see that  $\Sigma_\varrho\cap \left(M\times[\alpha,1]\right)  = S_{\varrho^4}(\mathcal{S}) \times [\alpha,1]$, where $S_{\varrho^4}(\mathcal{S}) \subset M$ is the normal sphere bundle of radius $\varrho^4$ with respect to the metric $g_M$.
    We extend this by the normal sphere bundle $S_{\varrho^4}(D^{n-k+1} \times 0)$ around the upper part $D^{n-k+1}\times\{0\} \subset T$.
    Let $\Sigma_\varrho$ be  the resulting smooth manifold  and denote the induced Riemannian metric by $g_{\varrho,\kappa}$, see \cref{fig:sigma_rho_construction}.
    Since $g_T$ is given by the union of $g_\kappa$ and $h$, our choice of $\varrho_1$ and \cref{theorem:handle-properties} ensures that the scalar curvature of $(\Sigma_\varrho,g_{\varrho,\kappa})$ is bounded below by 
    \begin{equation} \label{eqn:scalbound}
    \scal(M,g_M)-\varrho_{1}^{1/4}
    \end{equation}
    for all $0 < \varrho\leq \varrho_1$ and that the mean curvature of $\Sigma_{\varrho_0}$ is positive.
    
    \begin{figure}[ht]
        \scalebox{0.8}{
            \begin{tikzpicture}
                \node at (0,0) {\includegraphics[width=.8\textwidth]{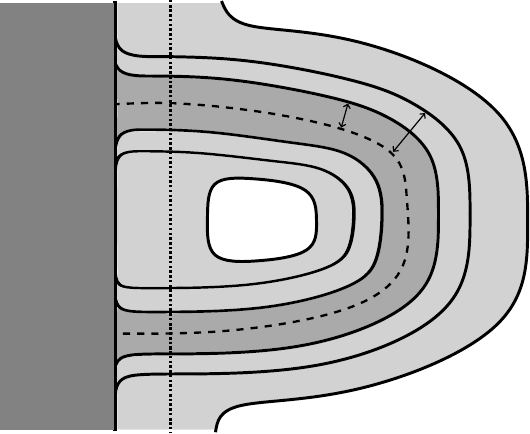}};
                % \grid
                \node at (-4,2) {$\Omega$};
                \node(2) at (-4,4.5) {$M$};
                \draw[-stealth, bend right=30] (2) to (-3, 3.8);
                \node (0) at (1,4.5) {${\widetilde M}$};
                \draw[-stealth, bend left=15] (0) to (1.3, 3.6);
                \draw[-stealth, bend right=20] (0) to (0, 0.8);
                \node at (1.8,1.85) {\small$\varrho_0^4$};
                \node (1) at (2.2,4) {$\Sigma_{\varrho_0}$};
                \draw[-stealth, bend left=15] (1) to (2.1, 2.2);
                \draw[-stealth, bend right=20] (1) to (1.1, 1.4);
                \node at (3.1,1.7) {\small$\varrho^4$};
                \node (2) at (4.5,3.3) {$\Sigma_{\varrho}$};
                \draw[-stealth, bend right=20] (2) to (1.6, 0.7);
                \draw[-stealth, bend left=20] (2) to (3.6, 1.7);
                \node at (-2.3,4.4) {$\overbrace{\qquad\quad\ }^{\ }$};
                \node at (-2.3,4.8) {$M\times[0,1]$};
                \node at (-1.1,3.6) {$T$};
                \node at (-1,2.4) {$T_0$};
                
                \node at (-4,-4.5) {$g_\Omega$};
                \node at (-1.8,-4.5) {$g_T$};
                \node at (-1.2,-3.5) {$h$};
                \node at (-2.3,-3.5) {$g_\kappa$};
    
                \node (3) at (3,-4.2) {$\begin{matrix}
                    D^{n-k+1}\times\{0\}\\
                    \text{Core of the handle }T
                \end{matrix}$};
                \draw[-stealth, bend right=20] (3) to (2.3, -1.5);
            \end{tikzpicture}
        }
        \caption{Schematic depiction of the quantitative surgery construction.}\label{fig:sigma_rho_construction}
    \end{figure}

    \medskip

    By a smooth deformation of $g_\Omega$ we can assure that for some $\delta >0$, the restriction of $g_\Omega$ to the tubular neighborhood $B_{2\sqrt{\varrho_0}}(\mathcal{S}) \times (-\delta, 0] \subset \Omega$ of $B_{2\sqrt{\varrho_0}}(\mathcal{S}) \subset \partial \Omega$ can be smoothly extended by the metric $g_\kappa$ onto $B_{2\sqrt{\varrho_0}}(\mathcal{S})\times(-\delta,1]$. 
    This can be achieved without altering the induced metric $g_{\varrho, \kappa}$ on $\Sigma_{\varrho}$, $0 < \varrho \leq \varrho_1$ or the mean curvature of $g_{\Omega}$ on $M \setminus B_{4\sqrt{\varrho_0}}(\mathcal{S})$, while preserving $H_{g_\Omega} \geq \kappa$. 
    This deformation is described in detail in \cref{sec:fixed lower H bound}, below \eqref{eq:pigeonhole}, where $U_i\subset V_i\subset Z_i$ correspond to $B_{2\sqrt{\varrho_0}}(\mathcal{S})\subset B_{3\sqrt{\varrho_0}}(\mathcal{S})\subset B_{4\sqrt{\varrho_0}}(\mathcal{S})$. 
    
    \medskip

    The manifold $\Sigma_{\varrho_0}$ thus bounds an oriented manifold of the form $\Omega_0 = \Omega\cup T_{0}$, which is diffeomorphic to $\Omega\cup T$ and $g_T$ restricts to a smooth metric $g_{\Omega_0}$ on $\Omega_0$. 
    Hence, we obtain a  fill-in $(\Omega_{0}, g_{\Omega_0})$ of $\Sigma_{\varrho_0}$.
    Since on $M \setminus B_{4\sqrt{\varrho_0}}$ the metrics $g_M$ and $g_{\varrho_0, \kappa}$ agree, since $\Sigma_{\varrho_0} \subset (\Omega_0, g_{\Omega_0})$ has positive mean curvature and using \eqref{eq:total-mean-curvature-on-3rho0}, we have 
     \begin{equation} \label{eqn:upper_bound_cool}
    \int_{\partial\Omega} H_{g_\Omega}\dvol_{g_{\partial\Omega}}\le 2\int_{\partial \Omega_0} H_{g_{\Omega_0}}\dvol_{g_{\partial\Omega_0}}.
    \end{equation}
    
    In the next step, the right hand side will be bounded above by an application of \cref{prop:criterion-hssw}.
     Let us abbreviate $\Sigma\coloneqq \Sigma_{\varrho_1}$ and consider the metric 
    \[g_\Sigma\coloneqq e^{\varrho_1} \cdot 2^{4/(n-2)}\iota^*\big((g_M+\dt^2)\cup h\big),\]
    where $\iota\colon\Sigma\embeds T$ denotes the inclusion.
    Note, that $g_\Sigma$ only depends on $(M,g_M)$ and $\psi$ and that $\Sigma \approx \widetilde M$.
    
    \medskip 
    
    We claim that $g_{\varrho_0,\kappa}\in\mathscr{R}_{g_\Sigma,s}(\Sigma)$ (see \cref{prop:criterion-hssw}) for 
    \[s=-\max(|\scal(M,g_M)|+\varrho^{1/4})\ge -\max(|\scal(M,g_M)| -1 . 
    \]
    There exists a smooth path of diffeomorphisms $\Phi_{\varrho}:\Sigma_{\varrho_1}\to\Sigma_\varrho$, $\varrho_0 \leq \varrho \leq \varrho_1$, such that $\Phi_{\varrho_1}=\id$, $\Phi_{\varrho}|_{M\setminus B_{2\sqrt{\varrho_1}}}=\id$ and such that the path $\varrho\mapsto e^\varrho\Phi_{\varrho}^*g_{\varrho,\kappa}$ is strictly increasing.
    On $\Sigma_{\varrho_1}\cap (M\times[0,1])$ these diffeomorphisms are constructed in \cref{prop:monotonicity-of-arbitrary-handles}. 
    Since the restrictions of these diffeomorphisms to $\Sigma_{\varrho_1}\cap (M\times[\tfrac{3}{4},1])$ dilate the normal sphere bundle around $\calS \subset M$ from radius $\varrho_1^4$ to $\varrho^4$ (this follows form the choice of $\beta$ in the proof of \cref{prop:monotonicity-of-arbitrary-handles}), we can take  the union with the dilatations $\Psi_{\varrho_1, \varrho}$ from \eqref{eqn:rad_dil} to define  $\Phi_{\varrho_1, \varrho}$ on $\Sigma_\varrho$. 
    By construction, we get $\Phi_{\varrho_1}=\id$, and the path $\varrho\mapsto e^\varrho\Phi_{\varrho}^*g_{\varrho,\kappa}$ is strictly  increasing by \cref{prop:monotonicity-of-arbitrary-handles} and the choice of $\alpha$, see \eqref{eqn:dil_in_core}.
    This path ends at the metric $e^{\varrho_1} g_{\varrho_1, \kappa}$ on $\Sigma$.
    Finally, we 
    \begin{itemize}
        \item rescale the conformal factor  in $e^{\varrho_1} \cdot u^{\tfrac{4}{n-2}} (g_M + dt^2)$, which induces the metric $e^{\varrho_1} g_{\varrho_1 , \kappa}$ on $\Sigma  \cap (M \times [0,1])$, along the path in \cref{prop:final-path-of-metrics}, 
        \item rescale  the conformal factor  in $e^{\varrho_1} \cdot h$, which induces the metric $e^{\varrho_1} g_{\varrho_1 , \kappa}$ on $\Sigma  \setminus (M \times [0,1])$, along the path $s \mapsto e^{\varrho_1}(1+s)^{\tfrac{4}{n-2}}$. 
    \end{itemize}
    This results in a strictly increasing smooth path of metrics on $\Sigma$ starting at $e^{\varrho_1} g_{\varrho_1, \kappa}$ and ending at $g_{\Sigma}$, while maintaining the uniform lower curvature bound $s=-\max(|\scal(M,g_M)|+\varrho^{1/4})$.
     This proves our claim that $g_{\varrho_0,\kappa}\in\mathscr{R}_{g_\Sigma,s}(\Sigma)$.
   Let $g_{\Omega_1}$ be the smooth Riemannian metric on $\Omega_1 = \Omega_0 \cup_{\widetilde M} (\widetilde M \times [0,1])$
   constructed in  \cref{prop:criterion-hssw}.
   We have 
   \[
    \scal_{g_{\Omega_1}} \geq \min \{\sigma, s, C(M,g_M,h)\} - 1.
   \]
Put $T_1 = T_0\cup (\widetilde M\times[0,1])$ and $C := \min \{0, s, C(M, g_M, h)\}$. 
   We obtain
\[
    \int_{\partial \Omega_0} H_{g_{\Omega_0}}\dvol_{g_{\partial \Omega_0}} < \int_{\partial \Omega_1} H_{g_{\Omega_1}} \dvol_{g_{\partial \Omega_1}}.
\]
Hence the claim follows from \eqref{eqn:upper_bound_cool}.
\end{proof}

\begin{proof}[Proof \cref{thm:mainB}] \label{proof:mainB}
Let $(M,g_M)$ and $\sigma \in \bbR$ be as in \cref{thm:mainB}.
Put $n := \dim M$.
As before, we can assume $n \geq 5$.
Since $M$ carries only finitely many spin structures, we can work with a fixed spin structure on $M$.

\medskip 

Since each fill-in of $M$ induces a fill-in of the spin manifold $M \sqcup \overline M$, where $\overline M$ carries the reverse spin structure, we can furthermore assume that the spin manifold $M$ is spin null-bordant. 
Let $B$ be a compact spin manifold with $\partial B = M$.

\medskip

Removing a small open disc from the interior of $B$, we obtain a compact spin bordism $W$ from $M$ to $S^n$.
Annihilating $\pi_0(W)$, $\pi_1(W)$ and $\pi_2(W)$ by spin surgeries in the interior of $S$ (using $n \geq 5$ and that $W$ is spin), we may assume that the inclusion $S^n \hookrightarrow \partial W \subset W$ is $2$-connected. 
Since $n \geq 5$, this implies that $(W,M)$ has a relative handle decomposition with handles of codimension at least $3$.
Hence, the sphere $S^n$ is obtained from $M$ by oriented surgeries of codimension at least $3$.

\medskip
    
The proof of \cref{thm:mainB} is by induction on the number $\ell$ of surgeries required. 
The case $\ell=0$, that is $M$ is diffeomorphic to a sphere, is proven in \cite{Shi-Wang-Wei}.
For $\ell\ge 1$, we note that after performing the first oriented surgery on $M$, we obtain a spin manifold $\widetilde{M}$  which can be turned into the sphere by $\ell-1$ oriented surgeries of codimension at least $3$.
Hence, by the inductive assumption, for every Riemannian metric $g_{\widetilde{M}}$ on $\widetilde M$, \cref{thm:mainB} holds for $(\widetilde{M},g_{\widetilde{M}})$.
Applying \cref{thm:surgery-step}, \cref{thm:mainB} holds for $(M,g_M)$.
\end{proof}

\begin{remark} \label{rem:GL} The construction in \cref{thm:surgery-step} allows to impose quantitative restrictions onto the classical surgery step of Gromov--Lawson \cite{GromovLawson1980}.
More specifically, let $\varepsilon > 0$, let $\sigma \in\bbR$, and let $(M,g_M)$ be a Riemannian manifold satisfying $\scal_g > \sigma$.
Let $\widetilde M$ be obtained from $M$ by surgery along an embedding $\psi \colon S^{n-k} \times D^k \hookrightarrow M$, $k \geq 3$.
Let $\breve M := M \setminus \mathrm{im}(\psi)$.
Choosing $\alpha$ in the proof of \cref{thm:surgery-step} small enough, working with a metric $h$ on $T$ of sufficiently small volume and diameter, and working with $M \times [0,\alpha]$ instead of $M \times [0,1]$, we obtain a Riemannian metric $g_{\widetilde M}$ on $\widetilde M$ with the following properties (compare \cref{eqn:scalbound}): 
\begin{itemize}
 \item $\scal_{g_{\widetilde {M}}} > \sigma$, 
 \item the restriction of $g_{\widetilde M}$ to $\breve M$ is equal to $g_M$, 
 \item the volume and diameter of the restriction of $g_{\widetilde M}$ to $\widetilde M \setminus \breve M$ are  bounded above by $\varepsilon$.
\end{itemize}
\end{remark}

\section{A new quasi-local mass}\label{sec:quasi-local}
Our results  have  applications in Physics. 
In 1982, Penrose \cite{Penrose1982Unsolved} listed important open problems in General Relativity. 
Finding a suitable notion of quasi-local mass was listed first.

\medskip

In Newtonian gravity, the mass density function $\rho$ satisfies Poisson's equation
\[
\Delta V = 4\pi \rho,
\]
where $V$ denotes the gravitational potential.  
The total mass contained in a region $\Omega$ is then given by
\[
m(\Omega) = \int_\Omega \rho \, d\mu = \frac{1}{4\pi} \int_{\partial \Omega} \nabla_\nu V \, d\mu,
\]
where $\nabla_\nu$ denotes the outward normal derivative.  
However, in general relativity, the geometric nature of spacetime makes it impossible to express the mass as an integral of a local density depending only on the metric and its derivatives. 
To overcome this difficulty, two main approaches have been developed:

\medskip

The first is to define the \emph{total mass} for an isolated system, namely the ADM mass, associated with an asymptotically flat manifold $(N, g_N)$.  
Here, asymptotically flat initial data sets $(N,g_N,k)$ model isolated gravitational systems, such as stars or galaxies. 
The mass $m(N,g_N,k)$ is well-defined and enjoys many desirable properties such as positivity and rigidity.  
However, it has the drawback that it applies only to entire non-compact manifolds, rather than to bounded regions within them.  
For instance, if $(N,g_N,k)$ models two black holes contained in its interior, the ADM mass only measures the total mass of the system, without distinguishing the contributions of the individual black holes.  

\medskip

In order to define the mass of a finite region--known as \emph{quasi-local mass}--various proposals have been made, including the Hawking mass \cite{Hawking1968, HuiskenIlmanen}, the Bartnik mass \cite{bartnik1989new}, the Brown--York mass \cite{BrownYork1993, shi-tam-manifolds-with-boundary}, and the Wang--Yau mass \cite{wang2009isometric}.  
Each of these notions has attractive geometric features but also important limitations.
A striking common feature is that all of them are particularly well-suited for spherical topology:
The Hawking mass often underestimates the true mass and is most effectively used together with inverse mean curvature flow, whose monotonicity holds only for $M \cong S^2$.
The Brown--York and Wang--Yau masses rely on isometric embeddings into Euclidean or Minkowski space, and classical embedding results such as the Weyl theorem require positive Gauss curvature.
The Bartnik mass, perhaps the most mathematically natural proposal, has so far been explicitly computed only for spherical boundaries.

\medskip

This contrasts with the increasing importance of spacetimes with more complicated topology including Emparan--Reall's black ring \cite{EmparanReall2002BlackRing}, the Kottler metric \cite{LeeNeves2015Penrose}, and the Horowitz--Myers soliton \cite{BrendleHung2024SystolicHorowitzMyers}.

\medskip

Motivated by \cite{MantoulidisMiao2017} we define a new notion of quasi-local mass which has no such topological shortcomings.
For this purpose, let $(N,g_N)$ be a complete oriented manifold with $\scal_{g_N}\ge \sigma$, $\sigma\in\mathbb R$, and let $M\subset N$ be a mean convex hypersurface and let $g_M$ be the metric on $M$ induced by $g_N$.
We set
\begin{align*}
    \textbf{m}_{\sigma}(M\subseteq (N,g_N))=\Lambda(M,g_M, \sigma)-\int_M H_g {\rm dvol}_{g_M},
 \end{align*}
where we define 
\[\Lambda(M,g_M, \sigma)\coloneqq\inf\left\{\int_{\partial\Omega}H_{g_\Omega}\dvol_{\partial\Omega}\ \left|\ \begin{matrix}
    (\Omega,g_\Omega)\text{ fill-in of }(M,g_M)\\
    \text{ with } \scal_{g_{\Omega}}\ge\sigma,\ H_{g_\Omega}|_{\partial\Omega}\ge0
\end{matrix}
\right.\right\}\]
to be the \enquote{optimal} constant for which \cref{thm:mainB} holds.
This should be compared with Bray's inner mass \cite{Bray01} which also relies on fill-ins.
\cref{thm:mainB} immediately yields
\begin{corollary}
    Suppose that $\operatorname{dim}(N)\le 7$ and $M$ is spin.
    Then the quasi-local mass $\textbf{m}_\sigma(M\subseteq (N,g_N))$ is well-defined and $\textbf m_\sigma(M\subseteq (N,g_N))\ge0$.
\end{corollary}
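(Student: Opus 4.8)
The plan is to obtain both assertions directly from \cref{thm:mainB}, the one point to exploit being that $(N,g_N)$ itself supplies an admissible competitor in the variational problem defining $\Lambda(M,g_M,\sigma)$. First I would record this competitor. Since $M$ is mean convex, it bounds a compact region $\Omega\subseteq N$ — the finite region whose mass is being measured, so that $M=\partial\Omega$ with mean curvature $H_g\ge0$ with respect to the outward normal of $\Omega$ (this is the normal entering the definition of $\textbf{m}_\sigma$, and such an $\Omega$ exists e.g. whenever the embedded hypersurface $M$ separates $N$). Then $(\Omega,g_N|_\Omega)$ is a fill-in of $(M,g_M)$: it induces $g_M$ on $\partial\Omega$, its scalar curvature satisfies $\scal_{g_N|_\Omega}\ge\sigma$ because $\scal_{g_N}\ge\sigma$ holds on all of $N$, and its outward boundary mean curvature is exactly $H_g\ge0$. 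In particular the class of fill-ins $(\Omega',g_{\Omega'})$ of $(M,g_M)$ with $\scal_{g_{\Omega'}}\ge\sigma$ and $H_{g_{\Omega'}}|_{\partial\Omega'}\ge0$ is non-empty.

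Next I would check well-definedness. Any such fill-in $(\Omega',g_{\Omega'})$ has non-negative boundary mean curvature, and since $n:=\dim M=\dim N-1\le 6$ and $M$ admits a spin structure, it meets the hypotheses of \cref{thm:mainB} with $\kappa=0$. Hence $\int_{\partial\Omega'}H_{g_{\Omega'}}\dvol_{g_{\partial\Omega'}}\le\Lambda(M,g_M,\sigma,0)<\infty$ for every such fill-in, so the optimal constant $\Lambda(M,g_M,\sigma)$ — the smallest constant for which this inequality holds over all admissible fill-ins — is a finite real number, and it is non-negative since each integrand is. Together with the non-emptiness of the competing class from the previous paragraph, this shows that $\textbf{m}_\sigma(M\subseteq(N,g_N))=\Lambda(M,g_M,\sigma)-\int_M H_g\dvol_{g_M}$ is a well-defined real number.

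Finally, positivity is then essentially formal: the competitor $(\Omega,g_N|_\Omega)$ from the first paragraph is an admissible fill-in, so $\int_M H_g\dvol_{g_M}=\int_{\partial\Omega}H_{g_N|_\Omega}\dvol_{g_{\partial\Omega}}\le\Lambda(M,g_M,\sigma)$ by the very property of $\Lambda(M,g_M,\sigma)$ as an upper bound valid for all admissible fill-ins, and hence $\textbf{m}_\sigma(M\subseteq(N,g_N))\ge0$. I expect the only non-trivial ingredient to be the finiteness of $\Lambda(M,g_M,\sigma)$, which is exactly the content of \cref{thm:mainB}; beyond that the argument is bookkeeping — matching the dimension bound $\dim M\le 6$, the spin hypothesis on $M$, and the value $\kappa=0$ to the assumptions of that theorem, and observing that the ambient configuration $(M\subseteq N)$ is itself an allowed fill-in.
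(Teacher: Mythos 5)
Your argument is correct and coincides with the paper's, which states the corollary as an immediate consequence of \cref{thm:mainB}: finiteness of the optimal constant follows from \cref{thm:mainB} applied with $\kappa=0$, $n=\dim N-1\le 6$ and $M$ spin, while non-negativity follows because the compact region of $N$ enclosed by the mean convex hypersurface $M$ is itself an admissible fill-in, so its total mean curvature is bounded by that constant. Note also that you correctly read $\Lambda(M,g_M,\sigma)$ as the least admissible upper bound (equivalently the supremum of total mean curvatures over fill-ins with $\scal\ge\sigma$ and $H\ge0$), which is the intended meaning of the paper's ``optimal constant for which \cref{thm:mainB} holds'', even though the displayed formula there is written with an infimum over fill-ins.
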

Here $\tfrac12\sigma$ corresponds to the cosmological constant.
We point out that the ADM mass for asymptotically flat manifolds has been successfully extended to the setting $\sigma<0$ \cite{Wang2001AHmass}, though there is no analogue for $\sigma>0$ \cite{BrendleMarquesNeves2010MinOo}.
On the other hand $\textbf m_\sigma$ is well-defined and non-negative for any choice of $\sigma\in\mathbb R$.

\medskip

When $n=2$ and $M \cong S^2$, this recovers the notion introduced by Mantoulidis--Miao~\cite[Theorem~1.5]{MantoulidisMiao2017}.
As in their setting, we obtain:
\begin{corollary}
    Suppose that $n=2$, $M=S^2$ and $K(M)>0$ for the Gaussian curvature. Then
    \begin{align*}
        \textbf{m}_0(M)=m_{BY}(M)
    \end{align*}
    for the Brown--York mass.
    In particular, we have in this case $\textbf{m}_0(M)=0$ if and only if $\Omega\subseteq \mathbb R^3$.
\end{corollary}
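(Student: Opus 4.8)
The plan is to deduce the statement from the classical Weyl embedding theorem together with the Shi--Tam positive mass theorem \cite{shi-tam-manifolds-with-boundary}, which for $n=2$ is exactly the spherical instance underlying the $\ell=0$ case of \cref{thm:mainB}; this reproves \cite[Theorem~1.5]{MantoulidisMiao2017}. Since $(S^2,g_M)$ has positive Gaussian curvature, the Weyl embedding theorem (Nirenberg--Pogorelov existence, Cohn--Vossen rigidity) provides an isometric embedding $\iota\colon(S^2,g_M)\hookrightarrow\bbR^3$ onto the boundary of a convex body $\Omega_0\subseteq\bbR^3$, unique up to rigid motions; let $H_0\colon S^2\to\bbR_{>0}$ denote the mean curvature of $\iota(S^2)$ with respect to the outward unit normal.

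The heart of the argument is the identity $\Lambda(M,g_M,0)=\int_{S^2}H_0\,\dvol_{g_M}$, where $\Lambda(M,g_M,0)$ is, as in the definition of $\textbf{m}_0$, the optimal (smallest) constant for which the conclusion of \cref{thm:mainB} holds, i.e.\ the supremum of $\int_{\partial\Omega}H_{g_\Omega}\,\dvol_{g_M}$ over all fill-ins $(\Omega,g_\Omega)$ of $(S^2,g_M)$ with $\scal_{g_\Omega}\ge0$ and $H_{g_\Omega}\ge0$. The inequality ``$\ge$'' is immediate, since $(\Omega_0,g_{\bbR^3})$ is such a fill-in, with $\scal\equiv0$ and $H_0>0$. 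The reverse inequality is precisely the positivity of the Brown--York mass: by \cite{shi-tam-manifolds-with-boundary} (see also \cite[Theorem~4.1 and Remark~4.2]{Shi-Wang-Wei}), every fill-in $(\Omega,g_\Omega)$ of $(S^2,g_M)$ with $\scal_{g_\Omega}\ge0$ and $H_{g_\Omega}\ge0$ satisfies $\int_{\partial\Omega}H_{g_\Omega}\,\dvol_{g_M}\le\int_{S^2}H_0\,\dvol_{g_M}$, with equality if and only if $(\Omega,g_\Omega)$ is isometric to a domain in $\bbR^3$. The borderline possibility that $H_{g_\Omega}$ vanishes somewhere is reduced to the case $H_{g_\Omega}>0$ of Shi--Tam by the standard deformation of \cite[Proposition~3.8]{BaerHanke2} followed by a limiting argument. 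Passing to the supremum yields the claimed value of $\Lambda$, attained by the convex Euclidean fill-in $\Omega_0$.

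Substituting into the definition of the quasi-local mass then gives
\[
  \textbf{m}_0\bigl(M\subseteq(N,g_N)\bigr)=\Lambda(M,g_M,0)-\int_M H_g\,\dvol_{g_M}=\int_{S^2}\bigl(H_0-H_g\bigr)\,\dvol_{g_M},
\]
which is the Brown--York mass $m_{BY}(M)$ in the normalization matching our (unnormalized) mean-curvature convention, the customary definition differing only by the constant factor $8\pi$. For the last claim, observe that the compact region $\Omega\subseteq N$ bounded by $M$ is itself a fill-in of $(S^2,g_M)$ with $\scal_{g_N}\ge\sigma=0$ and $H_g\ge0$, the latter being the assumed mean convexity of $M\subseteq N$. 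Hence the equality case of the Shi--Tam inequality applies: $\textbf{m}_0(M)=0$, i.e.\ $\int_M H_g\,\dvol_{g_M}=\int_{S^2}H_0\,\dvol_{g_M}$, holds if and only if $(\Omega,g_N|_\Omega)$ is isometric to a domain in $\bbR^3$, necessarily the convex body $\Omega_0$ by Weyl's theorem.

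I do not expect a genuine obstacle: the argument is a direct application of the Shi--Tam theorem and its rigidity statement, both already invoked in the paper via \cite{Shi-Wang-Wei}. The only points demanding mild care are the reconciliation of the normalization of $m_{BY}$ with the paper's mean-curvature convention, and the passage between the hypotheses ``$H\ge0$'' in the definition of $\Lambda$ and ``$H>0$'' in the classical formulation of Shi--Tam; both are routine.
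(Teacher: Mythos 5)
Your argument is correct and is essentially the proof the paper has in mind: the paper simply defers to Mantoulidis--Miao \cite[Theorem~1.5]{MantoulidisMiao2017}, whose proof is exactly your combination of the Weyl isometric embedding theorem with the Shi--Tam inequality and its rigidity case (note the paper's ``inf'' in the definition of $\Lambda$ is a typo for the supremum, as you correctly read it). The only points to flag are the ones you already flag yourself — the $8\pi$ normalization of $m_{BY}$ and the passage from $H\ge 0$ to $H>0$, where the deformation of \cite[Proposition~3.8]{BaerHanke2} costs an arbitrarily small amount of scalar curvature, so the limiting step should use a quantitative (e.g.\ hyperboloidal) version of the Shi--Tam bound or the fact that Mantoulidis--Miao treat $H\ge 0$ directly.
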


\appendix
\crefalias{section}{appendix}

\section{Collection of auxiliary results}\label{sec:preliminaries}

This collection of elementary results is used to construct handles in Euclidean space around points.

\begin{lemma}\label{Lemma: construction of function}
Let $0 < \varrho < 1$ and consider on the interval $[\varrho^{4},1]$ the function
\[
\xi_\varrho (r)
=-\frac{4}{3}\varrho \left(\sqrt{r}-\varrho^{2}\right)^{3/2}
-4\varrho^{3}\sqrt{\sqrt{r}-\varrho^{2}}.
\]
Then the derivatives satisfy
\begin{align*}
\xi_\varrho'(r)&=-\frac{\varrho}{\sqrt{\sqrt{r}-\varrho^{2}}},\\[4pt]
4\xi_\varrho''(r)&=-\frac{1}{r}\,\xi_\varrho'(r)\left(1+(\xi_\varrho'(r))^{2}\right).
\end{align*}
The inverse function $\xi_\varrho^{-1}$ is given by
\begin{align*}
\xi_\varrho^{-1}(t)
=\left(\varrho^{2}+4\varrho^{2}B^{2}\right)^{2},
\qquad 
B=\sinh\left(\frac{1}{3}\operatorname{arsinh}\left(-\frac{3t}{8\varrho^{4}}\right)\right),
\end{align*}
and its derivatives are
\begin{align*}
\dot{\xi}_\varrho^{-1}(t)&=-2B,\\
\ddot{\xi}_\varrho^{-1}(t)&=\frac{1}{4\varrho^{4}(1+4B^{2})}.
\end{align*}
\end{lemma}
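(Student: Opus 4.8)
The plan is to verify every assertion by direct differentiation and algebraic manipulation, organised around the substitution $u := \sqrt{r} - \varrho^{2}$, which is nonnegative on $[\varrho^{4},1]$ and satisfies $\tfrac{du}{dr} = \tfrac{1}{2\sqrt{r}}$. Writing $\xi_\varrho = -\tfrac{4}{3}\varrho\,u^{3/2} - 4\varrho^{3}u^{1/2}$, I would first compute $\tfrac{d\xi_\varrho}{du} = -2\varrho\,u^{1/2} - 2\varrho^{3}u^{-1/2} = -2\varrho\,u^{-1/2}\bigl(u + \varrho^{2}\bigr) = -2\varrho\,u^{-1/2}\sqrt{r}$, and then apply the chain rule to obtain $\xi_\varrho'(r) = -\varrho\,u^{-1/2} = -\varrho(\sqrt{r}-\varrho^{2})^{-1/2}$, which is the claimed first derivative. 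Differentiating once more gives $\xi_\varrho''(r) = \tfrac{\varrho}{4\sqrt{r}}(\sqrt{r}-\varrho^{2})^{-3/2}$. To check the second-order identity, I would expand the right-hand side $-\tfrac{1}{r}\xi_\varrho'(r)\bigl(1 + (\xi_\varrho'(r))^{2}\bigr)$, using $1 + \varrho^{2}(\sqrt{r}-\varrho^{2})^{-1} = \sqrt{r}\,(\sqrt{r}-\varrho^{2})^{-1}$, and observe that both sides reduce to $\tfrac{\varrho}{\sqrt{r}}(\sqrt{r}-\varrho^{2})^{-3/2}$.

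For the inverse function, I would first note that $\xi_\varrho' < 0$ on $(\varrho^{4},1]$, so $\xi_\varrho$ is strictly decreasing and hence a bijection onto its image, making $\xi_\varrho^{-1}$ well defined. Solving $t = \xi_\varrho(r)$ then amounts to solving the depressed cubic $t = -\tfrac{4}{3}\varrho\,(v^{3} + 3\varrho^{2}v)$ in the variable $v := u^{1/2} = \sqrt{\sqrt{r}-\varrho^{2}} \ge 0$. The key step is the scaling $v = 2\varrho B$, under which the right-hand side becomes $-\tfrac{8}{3}\varrho^{4}(4B^{3} + 3B) = -\tfrac{8}{3}\varrho^{4}\sinh\!\bigl(3\arsinh B\bigr)$ by the triple-angle formula $\sinh 3\theta = 3\sinh\theta + 4\sinh^{3}\theta$; inverting yields $B = \sinh\!\bigl(\tfrac{1}{3}\arsinh(-\tfrac{3t}{8\varrho^{4}})\bigr)$. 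Back-substituting $\sqrt{r} = \varrho^{2} + u = \varrho^{2} + v^{2} = \varrho^{2} + 4\varrho^{2}B^{2}$ produces the stated formula $\xi_\varrho^{-1}(t) = (\varrho^{2} + 4\varrho^{2}B^{2})^{2}$.

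Finally, for the derivatives of $\xi_\varrho^{-1}$ I would use the standard inverse-function formulas $\dot\xi_\varrho^{-1}(t) = 1/\xi_\varrho'(r)$ and $\ddot\xi_\varrho^{-1}(t) = -\xi_\varrho''(r)/(\xi_\varrho'(r))^{3}$, evaluated at $r = \xi_\varrho^{-1}(t)$, and rewrite everything through $B$. Since $v = 2\varrho B$ gives $\xi_\varrho'(r) = -\varrho v^{-1} = -\tfrac{1}{2B}$, we get $\dot\xi_\varrho^{-1}(t) = -2B$ immediately; and since $\sqrt{r} = \varrho^{2}(1 + 4B^{2})$ gives $\xi_\varrho''(r) = \tfrac{1}{32\varrho^{4}B^{3}(1 + 4B^{2})}$, dividing by $(\xi_\varrho'(r))^{3} = -\tfrac{1}{8B^{3}}$ and negating yields $\ddot\xi_\varrho^{-1}(t) = \tfrac{1}{4\varrho^{4}(1 + 4B^{2})}$. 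The only non-mechanical point is recognising the substitution $v = 2\varrho B$ together with the $\sinh$ triple-angle identity; beyond that, the main care required is to stay on the branch $v \ge 0$ and within the domain $r \in [\varrho^{4},1]$ so that all fractional powers are real and $\xi_\varrho^{-1}$ is single-valued.
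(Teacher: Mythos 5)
Your proposal is correct and follows essentially the same route as the paper: direct computation of $\xi_\varrho'$ and $\xi_\varrho''$, and the $\sinh$ triple-angle identity under the substitution $\sqrt{\sqrt{r}-\varrho^{2}}=2\varrho B$ to identify the inverse. The only minor difference is that you obtain $\dot{\xi}_\varrho^{-1}$ and $\ddot{\xi}_\varrho^{-1}$ from the inverse-function-theorem formulas $1/\xi_\varrho'$ and $-\xi_\varrho''/(\xi_\varrho')^{3}$, whereas the paper differentiates $B(t)$ directly via the $\cosh$ triple-angle formula; both are equally routine and give the stated identities.
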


Here $'$ and $''$ denote the derivatives $\partial_r$ and $\partial_r\partial_r$.
Similarly, $\dot{}$ and $\ddot{}$ denote the derivatives $\partial_t$ and $\partial_t\partial_t$.

\begin{proof}
A direct computation yields the derivative identities for $\xi_\varrho$.
For the inverse, we set $r=\varrho^{4}(1+4B^{2})^{2}$ and compute $$\xi_\varrho(r)=-\frac{32}{3}\varrho^{4}B^{3}-8\varrho^{4}B.$$
Hence, the triple angle formula 
\[
\sinh(3u)=3\sinh(u)+4\sinh^{3}(u)
\]
implies the claim that $\xi_\varrho^{-1}$ inverts $\xi_\varrho$.
Differentiating $B(t)=\sinh\left(\frac{1}{3}\operatorname{arsinh}\left(-\tfrac{3t}{8\varrho^{4}}\right)\right)$ gives for $u=\left(\frac{1}{3}\operatorname{arsinh}\left(-\frac{3t}{8\varrho^{4}}\right)\right)$
\begin{align*}
  \dot B(t)=&-\frac1{8\varrho^4}\cosh(u)\frac1{\sqrt{1+\left(\tfrac{3t}{8\varrho^{4}}\right)^2}}\\
  =&-\frac1{8\varrho^4}\cosh(u)\frac1{\sqrt{1+\sinh^2(3u)}}\\
  =&-\frac1{8\varrho^4}\cosh(u)\frac1{\cosh(3u)}  .
\end{align*}
Using the triple angle formula $$\cosh(3u)=4\cosh^3u-3\cosh u=\cosh(u)(4(1+\sinh^2 u)-3),$$ we obtain
\[
\dot B(t)=-\frac{1}{8\varrho^{4}(1+4B^{2})},
\]
from which the derivative identities for $\dot{\xi}_\varrho^{-1}$ and $\ddot{\xi}_\varrho^{-1}$ follow directly.
\end{proof}

\begin{remark}
  The function $B$ can also be expressed more explicitly with the help of the formula
  \[
 2 \sinh\left(\tfrac{1}{3}\operatorname{arsinh}u\right)
  =\left(u+\sqrt{u^{2}+1}\right)^{1/3}
  -\left(\sqrt{u^{2}+1}-u\right)^{1/3}
  \]
  which follows from repeated usage of the triple angle formulas.
\end{remark}

The next result is well-known and we will omit its proof.

\begin{lemma}\label{Lemma: spherical symmetry principal curvatures}
Let $\ell:\mathbb R^{n}\to\mathbb R$ be a spherically symmetric function, i.e. $\ell(x)=\ell(r)$ where $r=|x|$. 
Then the principal curvatures of the graph of $\ell$ are
\[
\lambda_1=-\frac{\ell''}{(1+\ell'^{2})^{3/2}},
\qquad
\lambda_2=\dots=\lambda_{n}=-\frac{\ell'}{r\sqrt{1+\ell'^{2}}}
\]
where $\lambda_1$ is the principal curvature in radial direction.
\end{lemma}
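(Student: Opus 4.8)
The statement to prove is \cref{Lemma: spherical symmetry principal curvatures}, which records the principal curvatures of the graph of a spherically symmetric function $\ell \colon \mathbb{R}^n \to \mathbb{R}$, $\ell(x) = \ell(r)$ with $r = |x|$. This is a standard computation, and the paper itself announces that it will omit the proof; so the plan below is a sketch of how one would verify it if asked.

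\textbf{Approach.} The plan is to work directly from the second fundamental form of the graph hypersurface $\Gamma = \{(x, \ell(|x|)) : x \in \mathbb{R}^n\} \subset \mathbb{R}^{n+1}$. First I would parametrize $\Gamma$ by $F(x) = (x, \ell(r))$, compute the tangent vectors $F_i = \partial_{x^i}F = (e_i, \ell'(r)\, x^i/r)$, the induced metric $g_{ij} = \delta_{ij} + \ell'^2 x^i x^j / r^2$, and the upward unit normal $\nu = (1 + \ell'^2)^{-1/2}(-\ell'\, x/r,\, 1)$. Then the scalar second fundamental form is $A_{ij} = \langle \partial_{x^i}\partial_{x^j} F, \nu\rangle$, and since $\partial_{x^i}\partial_{x^j}F$ has vanishing first $n$ components, only the last component of $\nu$ contributes: $A_{ij} = (1 + \ell'^2)^{-1/2}\, \partial_{x^i}\partial_{x^j}(\ell(r))$.

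\textbf{Key steps.} The main computation is then $\partial_{x^i}\partial_{x^j}(\ell(r)) = \ell''\, \tfrac{x^i x^j}{r^2} + \ell'\big(\tfrac{\delta_{ij}}{r} - \tfrac{x^i x^j}{r^3}\big)$. By $\mathrm{O}(n)$-symmetry it suffices to diagonalize at a point on the $x^1$-axis, say $x = (r, 0, \dots, 0)$; there $g = \mathrm{diag}(1 + \ell'^2, 1, \dots, 1)$ and $A = \mathrm{diag}\big((1+\ell'^2)^{-1/2}\ell'',\ (1+\ell'^2)^{-1/2}\ell'/r,\ \dots,\ (1+\ell'^2)^{-1/2}\ell'/r\big)$. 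The principal curvatures are the eigenvalues of $g^{-1}A$, i.e. $\lambda_1 = \ell''/(1 + \ell'^2)^{3/2}$ in the radial direction $\partial_{x^1}$ and $\lambda_2 = \dots = \lambda_n = \ell'/\big(r\sqrt{1 + \ell'^2}\big)$ in the directions tangent to the spheres $\{r = \text{const}\}$. Up to the overall sign convention for $A$ (the paper uses the convention under which a convex graph, e.g.\ the lower hemisphere, has positive principal curvatures with respect to the chosen normal), this gives exactly the stated formulas $\lambda_1 = -\ell''/(1+\ell'^2)^{3/2}$ and $\lambda_2 = \dots = \lambda_n = -\ell'/\big(r\sqrt{1+\ell'^2}\big)$. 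One should double-check that the sign chosen here is consistent with the way \cref{Prop: Euclidean Handle} uses these curvatures (there, $f_\varrho$ is decreasing, $\xi_\varrho' < 0$, and the $\lambda_2$ obtained is positive), which pins down the minus signs.

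\textbf{Main obstacle.} There is no genuine obstacle — the content is a one-line invariance argument plus a short derivative computation. The only thing that requires care is bookkeeping of sign conventions (orientation of the normal, sign in the definition of $A$, whether $\lambda_1$ is the radial or spherical eigenvalue), so that the final formula matches the convention used elsewhere in the paper; this is why I would carry out the diagonalization explicitly at a point on a coordinate axis rather than quoting a formula. For that reason, like the authors, one may reasonably regard this lemma as well-known and omit the detailed proof.
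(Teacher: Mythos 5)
Your computation is correct, and there is nothing in the paper to compare it against: the authors explicitly declare the lemma well known and omit its proof, so your sketch simply supplies the standard verification. The details check out — the Hessian of $\ell(r)$, the diagonalization at a point on a coordinate axis, and the identification of the radial versus spherical eigenvalues of $g^{-1}A$ are all as they should be. Your handling of the sign is also adequate: the minus signs in the statement come from the convention $A(e_i,e_j)=-\langle\nabla_{e_i}e_j,\nu\rangle$ with the upward unit normal, which is exactly the convention the paper uses elsewhere (see the formula for $A_g$ in the proof of \cref{lem:curvature-estimates-black-box}), and your cross-check against \cref{Prop: Euclidean Handle} — where $\xi_\varrho'<0$ must yield $\lambda_2>0$ — pins this down correctly.
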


\section{Example of unbounded total mean curvature}\label{sec:bad-constant}
Let $n\ge2$.
Then, for every $K>0$, there exists a metric $g_K$ on $S^n$ satisfying
\begin{itemize}
    \item $\vol(S^n,g_K)\le K^{-1}$,
    \item $\diam(S^n,g_K)\le K^{-1}$,
    \item there is a fill-in of $(S^n,g_K)$ with nonnegative scalar curvature, positive mean curvature and total mean curvature larger than $K$.
\end{itemize} 
Hence, $\Lambda(S^n,g_K,0,0)\ge K$.

\medskip

By taking connected sums with $g_K$, we can implant these examples into an arbitrary Riemannian manifold.
More precisely, let $(M,g_M)$ be a Riemannian manifold and let $(\Omega,g_\Omega)$ be a fill in with nonnegative scalar and mean curvature and 
\[\int_{\partial\Omega}H_{g_\Omega}\dvol_{g_{\partial\Omega}} = \Lambda.\]
Then, for $K>0$, there exists a metric $g_{M,K}$ on $M$ and a fill-in $(\Omega_\eps,g_{\Omega,\eps})$ of $(M,g_{M,K})$ with nonnegative scalar curvature
and positive mean curvature, which satisfies
\begin{enumerate}
    \item $g_M|_U = g_{M,K}|_U$ for a subset $U\subset M$ with $\vol(U) = \vol(M)-K^{-1}$,
    \item $\vol(M, g_{M,K}) \le \vol(M,g_M)+K^{-1}$,
    \item $\diam(M, g_{M,K}) \le {\rm diam}(M,g_M)+K^{-1}$,
    \item $\int_{\partial\Omega}H_{g_\Omega}\dvol_{g_{\partial\Omega}} = \Lambda + K$.
\end{enumerate}

To construct the metric $g_K$ on $S^n$, let $\ell\coloneqq(2c(n)K)^{2n-1}$ and $r\coloneqq(2c(n)K)^{-2}$, where $c(n)=\vol(S^n,g_\circ)$ is a dimensional constant.
By increasing $K$, we may assume that $\ell\in\bbN$ and $20r\le K^{-1}$.
We embed $\ell+1$ disjoint spheres of radius $r$ into $\bbR^{n+1}$, such that one of them is centered at the origin and the straight paths connecting a given embedded sphere with this \enquote{center sphere} does not intersect any other sphere.
We perform connected sums to connect all outer spheres to the center-sphere by straight tubes, and we may assume that any connecting tube has volume smaller than $\tfrac{1}{2\ell} K^{-1}$.
Let us denote the resulting Riemannian manifold by $(X,g_X)$.
We observe, that $X\approx S^n$.
By \cite[Theorem 3.1]{Lawson_Michelsoh}, we may assume that $X$ has positive mean curvature with respect to the euclidean metric, since $n\ge2$.
Hence, the interior of $X$ constitutes a flat fill-in $(\widetilde{\Omega},g_{\rm flat})$ with positive mean curvature. A visualisation can be found in \cref{fig:many-spheres}.

\begin{figure}[ht]
    \begin{tikzpicture}
        \node at (0,0) {\includegraphics[width=.3\textwidth]{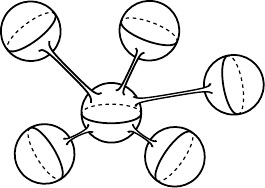}};
    \end{tikzpicture}
    \caption{The metric $g_K$ on $S^n$.}\label{fig:many-spheres}
\end{figure}

\medskip

Let us now estimate the volume and the total mean curvature of this fill-in.
For the total mean curvature, we can restrict to the respective outer hemispheres, where the mean curvature is constantly equal to $n/r$.
Hence, we obtain:
\begin{align}\label{eq:example-sphere}
    \begin{split}
        \int_{\partial\widetilde{\Omega}} H_{g_{\rm flat}}\dvol_{g_{\partial\widetilde{\Omega}}}\ge{}& \ell\int_{S^n_{r,+}} H_{g_{\rm flat}}\dvol_{g_{\partial\widetilde{\Omega}}} \ge \frac12 n c(n)\ell r^{n-1}\\
        \ge{}& \frac12 n c(n) \bigl(2c(n) K\bigr)^{2n-1}\bigl(2c(n) K\bigr)^{-2(n-1)}\ge K.
    \end{split}
\end{align}
The volume on the other hand satisfies:
\begin{align*}
    \vol(X)&\le \ell\left(\vol(S^n_r)+\frac1{2\ell} K^{-1}\right)= c(n)\bigl(2c(n) K\bigr)^{2n-1}\bigl(2c(n) K\bigr)^{-2n}+\frac12 K^{-1}\\ &\le \frac12K^{-1} + \frac12 K^{-1} = K^{-1}
\end{align*}

\medskip

Note, that while we cannot put the outer spheres onto a circle of arbitrarily small radius, we can still cut away portions of the tubes as non-embedded Riemannian manifolds, resulting in tubes of length less than $r\le \tfrac{1}{12}K^{-1}$.
The resulting manifold and its fill-in might not be isometric to a Riemannian submanifold of $(\bbR^{n+1},\delta)$, but the mean curvature will still be positive, the scalar curvature will remain flat and estimate \eqref{eq:example-sphere} is still valid, as we do not alter the outer hemispheres by this procedure.
The volume may have further decreased and the diameter of the resulting metric $g_K$ is smaller than $3\pi\cdot r+4\cdot r\le 12\cdot r\le K^{-1}$.

\printbibliography
\vspace{1em}

\end{document}